\pgfplotsset{compat=1.10}
\title
[Fourier restriction in higher dimensions]{Improved Fourier restriction estimates\\ in higher dimensions}
\author{Jonathan Hickman}
\address{Mathematical Institute, University of St Andrews, North Haugh,
St Andrews, Fife, KY16 9SS, UK.}
\email{jeh25@st-andrews.ac.uk}
\author{Keith M. Rogers}
\address{Instituto de Ciencias Matem\'aticas CSIC-UAM-UC3M-UCM, Cantoblanco, Madrid, 28049, Spain.}
\email{keith.rogers@icmat.es}
\thanks{Mathematics Subject Classification: 42B20}
\keywords{Fourier transform, Fourier restriction, Kakeya, polynomial partitioning}
\thanks{Partially supported by the EPSRC standard grant EP/R015104/1, the NSF grant DMS-1440140, and the MINECO grants SEV-2015-0554 and MTM2017-85934-C3-1-P}
\theoremstyle{plain}
\newtheorem{theorem}{Theorem}[section]
\newtheorem{lemma}[theorem]{Lemma}
\newtheorem{proposition}[theorem]{Proposition}
\newtheorem*{key estimate}{Key estimate}
\theoremstyle{definition}
\newtheorem{definition}[theorem]{Definition}
\newtheorem{remark}[theorem]{Remark}
\newtheorem*{acknowledgement}{Acknowledgement}
\newtheorem*{notation}{Notation}
 \renewcommand{\le}{\leqslant}
\renewcommand{\ge}{\geqslant}
\renewcommand{\leq}{\leqslant}
\renewcommand{\geq}{\geqslant}
\newcommand{\ta}{\texttt{a}}
\newcommand{\bta}{\mbox{\small$\ta$}}
\newcommand{\sta}{\mbox{\scriptsize$\ta$}}
\newcommand{\tc}{\texttt{c}}
\newcommand{\btc}{\mbox{\small$\tc$}}
\newcommand{\stc}{\mbox{\scriptsize$\tc$}}
\renewcommand{\O}{\O}
\newcommand{\N}{\mathbb{N}}
\newcommand{\Z}{\mathbb{Z}}
\newcommand{\R}{\mathbb{R}}
\newcommand{\T}{\mathbb{T}}
\newcommand{\C}{\mathbb{C}}
\newcommand{\ud}{\mathrm{d}}
\renewcommand{\O}{\mathcal{O}}
\newcommand{\Sc}{\mathcal{S}}
\newcommand{\fh}{\mathfrak{h}}
\newcommand{\fB}{\mathfrak{B}}
\newcommand{\bZ}{\mathbf{Z}}
\newcommand{\bY}{\mathbf{Y}}
\newcommand{\cB}{\mathcal{B}}
\newcommand{\sE}{\mathscr{E}}
\newcommand{\BL}[1]{\mathrm{BL}_{#1}}
\newcommand{\tang}{\mathrm{tang}}
\newcommand{\trans}{\mathrm{trans}}
\newcommand{\cell}{\mathrm{cell}}
\newcommand{\alg}{\mathrm{alg}}
\newcommand{\Deg}{\overline{\deg}\,}
\newcommand{\Dec}{\mathrm{RapDec}(r)\|f\|_{2}}
\begin{document}
\begin{abstract} We consider Guth's approach to the Fourier restriction problem via polynomial partitioning. By writing out his induction argument as a recursive algorithm and introducing new geometric information, known as the polynomial Wolff axioms, we obtain improved bounds for the restriction conjecture, particularly in high dimensions. Consequences for the Kakeya conjecture are also considered.
\end{abstract}

\maketitle

\section{Introduction}

We consider the Fourier transform defined, initially on integrable functions, by 
$$
\widehat{f}(\xi):=\int_{\R^n} f(x)\, e^{-i \langle x,\xi\rangle} \ud x.
$$
Letting $\sigma$ denote the surface measure on a truncated piece of the paraboloid, Stein's restriction conjecture~\cite{Stein1979} asserts that the {\it a priori} estimate
\begin{equation}\label{restriction}\tag{$\mathrm{R}_p$}
\|\widehat{f}\,\|_{L^{p'}\!(d\sigma)} \le C_{n,p}\|f\|_{L^{p'}\!(\R^n)}
\end{equation}
holds for all $p>\frac{2n}{n-1}$, where $1/p+1/p'=1$. This was proved by Fefferman and Stein in two dimensions~\cite{Fefferman1970}, but remains open in higher dimensions despite extensive study; see, for example,~\cite{BCT2006, Bourgain1991, BG2011, CS, Demeter,   Guth2016,  Guth,  
Moyua1996, 
Ramos,
Strichartz1977,
Tao2003,  TV2000, TVV1998, Temur2014, Tomas1975, 
 Wang, Wolff2001} and the references therein.

The strongest partial results  are based on the polynomial partitioning
method, introduced to the problem by Guth~\cite{Guth2016, Guth}. In this article further progress is
obtained  by augmenting the method
with additional geometric inequalities recently established in work of Katz and the second
author~\cite{KR}.

Our results are most easily compared with the previous literature in the high dimensional context. If the restriction conjecture were true,  \eqref{restriction} would hold for
$$p>2+2n^{-1}+O(n^{-2}),$$ and so we consider  $\lambda\ge 2$ for which we can confirm that \eqref{restriction} holds in the range
\begin{equation}\label{range}
 p>2+\lambda n^{-1}+O(n^{-2}).
\end{equation}
A consequence of the work of Tomas~\cite{Tomas1975}  is that
$\lambda$ can be taken to be~4. Although many refinements were made since (including the work of Tao~\cite{Tao2003} which removed the $O(n^{-2})$-term with $\lambda=4$),  the linear coefficient  was not improved for some thirty-five years when Bourgain and Guth~\cite{BG2011} showed that it can be lowered to~$3$. Most recently, Guth~\cite{Guth} proved that~$\lambda$ can be taken to be  $8/3$. We improve this as follows:

\begin{theorem}\label{asymptotic theorem} \eqref{restriction} holds in the range \eqref{range} with  $\lambda=4/(5-2\sqrt{3})$.
\end{theorem}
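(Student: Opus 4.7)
The plan is to follow Guth's polynomial partitioning approach from \cite{Guth} but refine the analysis of the tangential contribution using the polynomial Wolff axioms of Katz and the second author \cite{KR}. First I would dualise \eqref{restriction} to an extension operator estimate of the form $\|Ef\|_{L^p(B_R)} \leq R^\varepsilon \|f\|_\infty^{1-2/p}\|f\|_2^{2/p}$ on balls of radius $R$, where $E$ is the extension associated to a parabolic cap. Following the broad/narrow strategy, I would reduce matters to a $k$-broad estimate, effectively discarding contributions where the wave packets of $Ef$ concentrate in a low-dimensional algebraic variety; the parameter $k$ is to be optimised together with the other scales.

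Second, apply polynomial partitioning at scale $R$: choose a polynomial $P$ of degree $D=R^\delta$ whose zero set equipartitions $|Ef|^p$ into $\sim D^n$ cells. In each non-algebraic cell, $Ef$ restricts to an extension associated to many fewer wave packets, so induction on the $L^2$-norm closes the cellular case provided $D$ is well-chosen. Tubes intersecting the zero set $Z(P)$ then split into a transverse family (handled by the multilinear Kakeya bound and induction on scales at the smaller radius $R/D$) and a tangential family contained in a neighbourhood of $Z(P)$.

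The main obstacle lies in the tangential case; this is where the new input enters. The polynomial Wolff axioms of \cite{KR} assert that the number of $R^{1/2}$-tubes simultaneously tangent to a degree-$D$ variety inside $B_R$ and concentrated in a thin $j$-plane slab satisfies a bound strictly better than the scale-sensitive count Guth was forced to use. Inserting this sharper tangential estimate into the recursion replaces one of the loss factors in Guth's scheme, and, when the parameters $(k, D, \delta)$ are chosen to balance the cellular, transverse, and tangential contributions, yields the improved exponent.

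Third, I would phrase the induction as a recursive algorithm on the pair $(R, \|f\|_\infty/\|f\|_2)$: each of the three cases outputs an inequality involving strictly smaller radius or strictly smaller $L^2$ mass, so a standard $\varepsilon$-removal argument closes the scheme. The final step is to solve the resulting linear programme in the exponents: the transverse term is controlled by the multilinear Kakeya bound, the cellular term by the partition, and the tangential term by the polynomial Wolff axioms; setting these three contributions equal gives $\lambda = 4/(5-2\sqrt{3})$. The most delicate part will be the book-keeping across many iterations of the algorithm, ensuring that the compounded losses still combine to a genuine $R^\varepsilon$ factor and that the Wolff-axiom input interacts correctly with the $k$-broad reduction at every recursive step.
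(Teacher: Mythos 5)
Your outline reproduces the coarse architecture of the argument (dualise, reduce to $k$-broad estimates, polynomial partitioning with a cellular/transverse/tangential trichotomy, feed the polynomial Wolff axioms into the tangential case), but the steps that actually produce the exponent $\lambda=4/(5-2\sqrt{3})$ are missing or incorrect. First, the final sentence of your third paragraph --- ``setting these three contributions equal gives $\lambda$'' --- is not a derivation. In the paper the value arises from two separate optimisations: (i) solving a linear system in $n-m+1$ free parameters $\gamma_m,\dots,\gamma_n$ (one for each dimension at which a tangency condition is imposed), which yields the explicit $k$-broad exponent $p_n(k)$ of Theorem~\ref{main theorem}; and (ii) balancing $p_n(k)$, with $k=\nu n+O(1)$, against the lower bound $p\ge 2+\tfrac{4}{2n-k}$ required by the Bourgain--Guth passage from broad to linear estimates, which leads to the quadratic $\nu^2+6\nu-3=0$ and hence $\lambda=4/(2-\nu)$. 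A single three-way balance at one scale cannot reproduce this. Relatedly, the crucial structural point is that the polynomial Wolff axioms are exploited at \emph{every} intermediate dimension $m\le\ell\le n$ along a nested chain of varieties $\bZ_m,\dots,\bZ_n$ and scales $r_m<\dots<r_n$ (via the averaged-norm estimate of Lemma~\ref{nesting lemma} combined with the local Property~III); a single application at the top dimension, which is what your sketch describes, is explicitly noted in the introduction to give a strictly weaker result.

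Two further gaps. You attribute the transverse case to ``the multilinear Kakeya bound'': multilinear Kakeya plays no role here --- the transverse wave packets are controlled by a continuum B\'ezout-type lemma (Lemma~\ref{transversal intersection lemma}) bounding how often a tube can cross a variety transversally, and the multilinear input has already been absorbed into the definition of the $k$-broad norm. More seriously, you omit the transverse equidistribution estimates (Lemma~\ref{transverse equidistribution lemma}) entirely. These supply the $(r/\rho)^{-(n-m)/2}$ gains in the $L^2$ mass each time the dimension drops, and without them the scheme cannot get past the Bennett--Carbery--Tao range $p>\tfrac{2k}{k-1}$; the polynomial Wolff axioms alone do not suffice. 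As written, the proposal would therefore not close to the claimed exponent.
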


We also obtain concrete improvements on the range of exponents for \eqref{restriction} in all dimensions $n \geq 3$ except $n=3,6,8,10$ or $12$. In these exceptional cases the current best results are due to Wang~\cite{Wang} when $n=3$ and Guth~\cite{Guth} when $n= 6,8,10$ or $12$. The current state-of-the-art for the restriction problem in various low dimensions is tabulated below in Figure~\ref{exponent table}. 

The proof of Theorem~\ref{asymptotic theorem} relies on geometric information coming from a recent result in~\cite{KR}.  This geometric information, which we will refer to as the \emph{polynomial Wolff axioms},  bounds the number of direction-separated line segments that can be contained in the neighbourhood of a real algebraic variety.

The present analysis extends that previously performed by Guth~\cite{Guth2016} in $\R^3$, who proved and applied the polynomial Wolff axiom for a two dimensional variety. Guth's induction argument ~\cite{Guth2016} can be combined with later developments from~\cite{Guth} and thereby directly extended to higher dimensions, using a single application of the $(n-1)$-dimensional polynomial Wolff axioms (see, for example,~\cite{Demeter} or~\cite{KR}), however this yields weaker results than those obtained here. We will take advantage of the polynomial Wolff axioms more often.

By combining the arguments of this article with results from~\cite{GHI}, one may also establish a version of Theorem~\ref{asymptotic theorem} for general positively-curved surfaces, including the unit sphere. It is also possible that the methods could be applied to study other oscillatory integral operators, such as those arising in the study of Bochner--Riesz multipliers, but this has not  been fully explored. Finally, by a standard argument relating the restriction and Kakeya conjectures, Theorem~\ref{asymptotic theorem} implies estimates for the Kakeya maximal function. This bound is new with $n=9$, however it does not improve the dimension estimate for Kakeya sets due to Katz--Tao~\cite{Katz2002}. Perhaps of more interest is the fact that these estimates provide an asymptotic improvement over the classical Wolff bound~\cite{Wolff1995} via a very different approach to that used in~\cite{Katz2002}.

The article is organised as follows:
\begin{itemize}
    \item A number of reductions are performed in the sequel. Following~\cite{BG2011, Guth2016, Guth}, the problem is reduced to establishing the so-called \emph{$k$-broad estimates} for the extension operator. 
    \item After setting up some notational conventions in Section~\ref{notation section}, a sketch of the proof of the main theorem is provided in Section~\ref{Wolff axioms section}.
    \item In Sections~\ref{k broad section}-\ref{wave packet decomposition section}, the basics of broad norms, polynomial partitioning and the wave packet decomposition are recalled.  
   \item In Section~\ref{tangential}, we show how the polynomial Wolff axiom theorem  can be used to improve certain estimates for averaged norms at different scales.
   \item In Section~\ref{structure lemma section}, Guth's polynomial partitioning argument from~\cite{Guth2016, Guth} is reformulated as a recursive algorithm. 
    \item In Section~\ref{proof section}, the new estimates are combined with the recursive algorithm to improve the range of estimates for the restriction problem.
    \item The final section contains a discussion of restriction to other hypersurfaces, some remarks on the numerology, and possible directions in which the argument could be strengthened. Finally, the application to the Kakeya problem is described. 
\end{itemize}

\begin{acknowledgement} The first author thanks Larry Guth and Hong Wang for some interesting discussions which greatly helped the development of this project. The authors also thank an anonymous referee for a thorough and helpful report.
\end{acknowledgement}




\section{Reduction to $k$-broad estimates}\label{reduction section}

Restriction estimates are typically proven via duality, with the adjoint operator~$E$ defined by
\begin{equation*}
E g(x) := \int_{|\xi|\le 1}
g(\xi)\,e^{i(x_1\xi_1+\ldots+x_{n-1}\xi_{n-1} +x_n|\xi|^2)}  \ud\xi.
\end{equation*}
Noting that now $\xi \in \mathbb{R}^{n-1}$ (and $x\in \mathbb{R}^n$ as before), this is often referred to as the \emph{extension operator}. It follows that the estimate \eqref{restriction} for a given value of $p$ is equivalent to the inequality 
\begin{equation*}
\|Eg\|_{L^p(\R^n)}\leq C_{n,p} \|g\|_{L^p(\mathbb{R}^{n-1})}.
\end{equation*}
Moreover, by a now standard \emph{$\varepsilon$-removal argument} (see~\cite{Tao1999}) and factorisation theory (see ~\cite{Bourgain1991} or ~\cite[Lemma 1]{Carbery1992}), this holds for all $p$ in an open range if and only if for all $\varepsilon>0$ and all $R \gg 1$ the local estimates
\begin{equation}\label{local extension}\tag{$\mathrm{R}^*_p$}
\|Eg\|_{L^p(B_{\>\!\!R})}\leq C_{n,p,\varepsilon} R^{\varepsilon}\|g\|_{L^\infty(\R^{n-1})}
\end{equation}
hold in the same range. Here $B_{\>\!\!R}$ denotes an arbitrary ball of radius $R$ in $\R^n$.

Rather than attempt to prove \eqref{local extension} directly, it is useful to work with a class of weaker inequalities known as \emph{$k$-broad estimates}. These inequalities were introduced by Guth~\cite{Guth2016, Guth} and were inspired by the earlier multilinear restriction theory developed in~\cite{BCT2006}. The $k$-broad estimates take the form
\begin{equation}\label{broad}\tag{$\BL{k}^p$}    
\|Eg\|_{\BL{k}^{p}(B_{\>\!\!R})} \leq C_{n,p,\varepsilon} R^{\varepsilon} \|g\|_{L^\infty(\R^{n-1})},
\end{equation}
where the expression on the left-hand side is known as a \emph{$k$-broad norm}. The precise definition of the $k$-broad norm is a little complicated and is deferred until Section~\ref{k broad section}. We remark, however, that the key advantage of working with $\|Eg\|_{\BL{k}^{p}(B_{\>\!\!R})}$ rather than $\|Eg\|_{L^p(B_{\>\!\!R})}$ is that the former expression is very small whenever the mass of $Eg$ is concentrated near a $(k-1)$-dimensional set (see Lemma~\ref{key k-broad lemma} below for a precise statement of this property). 

The main result of this article is the following theorem.

\begin{theorem}\label{main theorem} Let $2 \leq k \leq n-1$ and  
\begin{align}\label{with}
 p\ge p_n(k):= 2+\frac{8(2n-1)}{n(5n+2k-9)+k(k-3) +4}.
\end{align}
Then \eqref{broad} holds for all $\varepsilon>0$ and $R \gg 1$. 
\end{theorem}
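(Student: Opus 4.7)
The plan is to prove Theorem~\ref{main theorem} by induction on the radius $R$, following the polynomial partitioning framework of Guth~\cite{Guth2016, Guth}, with the crucial refinement that the polynomial Wolff axioms from~\cite{KR} are exploited at the tangential stage. Fix $p$ satisfying~\eqref{with}; I would show that if~\eqref{broad} is known at all sufficiently smaller scales, then it can be deduced at scale $R$, with the $R^\varepsilon$ factor absorbing a small loss at each step. The iteration terminates once the scale is driven below a fixed power of $R$, at which point a trivial bound closes the induction.

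First I would apply polynomial partitioning to $|Eg|^p$ on $B_R$, producing a polynomial $P$ of controlled degree $D$ whose zero set partitions $B_R$ into roughly $D^n$ cells in which the $\BL{k}^{p}$-mass of $Eg$ is evenly distributed. This yields the familiar trichotomy. Either most of the broad norm comes from wave packets contributing in a bounded number of cells (the \emph{cellular case}), in which case a finer-scale estimate applies inside each cell and summing the contributions via the inductive hypothesis at a smaller scale gives an acceptable bound provided $p$ is sufficiently large; or most of the norm concentrates in a thin neighbourhood $W$ of the variety $Z(P)$ (the \emph{algebraic case}), which requires further analysis.

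In the algebraic case, I would split the wave packets according to their relationship with $Z(P)$: those transverse to $Z$ interact with $W$ only along a short segment and can be reduced to a smaller-scale problem, so the induction hypothesis applies again; those tangent to $Z$ essentially lie in a tube neighbourhood of a low-dimensional algebraic set, and constitute the delicate contribution. It is here that the polynomial Wolff axioms of~\cite{KR} provide the new input: they bound the number of direction-separated tubes that can be contained in the neighbourhood of a variety of bounded degree, sharpening the averaged $L^q$ estimates to be developed in Section~\ref{tangential}. Replacing the cruder bound obtainable from the $k$-linear restriction theorem of~\cite{BCT2006} by these refined averaged estimates is precisely what pushes the exponent below the value achieved in~\cite{Guth}.

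Packaging all three cases through the recursive algorithm of Section~\ref{structure lemma section} yields a multi-scale inductive inequality that must be iterated a bounded number of times; the main obstacle, and the source of the specific form of~\eqref{with}, is to balance the losses accumulated over these iterations against the gain from the tangential estimate. Concretely, one must choose the partitioning degree $D$ and decide at which stages of the recursion to invoke the refined tangential bound, so that the accumulated loss is absorbed by $R^\varepsilon$ exactly when $p \ge p_n(k)$. This bookkeeping amounts to an optimisation problem whose solution dictates the explicit coefficient structure appearing in $p_n(k)$, and I expect it to be the most technically demanding part of the argument.
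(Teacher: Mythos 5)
Your outline correctly names the main ingredients (polynomial partitioning, the cellular/transverse/tangential trichotomy, and the polynomial Wolff axioms), but as written it contains a genuine gap: the mechanism that actually produces the exponent $p_n(k)$ is absent. You describe invoking the polynomial Wolff axioms once, ``at the tangential stage,'' inside an induction on the radius $R$. The paper is explicit that a single application of the Wolff axioms to the final $(n-1)$-dimensional (or $m$-dimensional) tangential piece, grafted onto Guth's induction, yields a strictly weaker range than \eqref{with}. What drives Theorem~\ref{main theorem} is that the recursion of Sections~\ref{structure lemma section}--\ref{proof section} records a whole chain of nested tangency conditions, with respect to varieties $S_n, S_{n-1},\dots,S_m$ at scales $r_n>r_{n-1}>\dots>r_m$, and Lemma~\ref{nesting lemma} (the wave-packet form of Theorem~\ref{Katz--Rogers theorem}) is applied at \emph{every} intermediate dimension $\ell$, each application being fed by the local transverse equidistribution estimates $(\mathrm{III}_{\mathrm{loc}})_j$ propagated through all earlier scales. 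The resulting $n-m+1$ bounds on $\max_{O}\|f_O\|_2$ are then combined by a weighted geometric mean with weights $\gamma_m,\dots,\gamma_n$, and the exponent $p_n(k)$ emerges from saturating the constraints $X_i=Y_i=0$ and solving the ensuing linear system \eqref{linear system 2} for the $\gamma_j$ (a telescoping computation). Your proposal defers all of this to ``an optimisation problem'' without setting it up; carried out literally as stated, your scheme would recover roughly Guth's exponent $2+\tfrac{4}{n+k-2}$ with at most one Wolff-axiom gain, not \eqref{with}.

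Two smaller inaccuracies point the same way. First, you present the baseline being improved as the Bennett--Carbery--Tao $k$-linear bound, but the averaged estimates of Section~\ref{tangential} already incorporate Guth's transverse equidistribution (Lemma~\ref{transverse equidistribution lemma}), which you never mention and which is indispensable: without it one cannot even reach Guth's range, let alone improve it. Second, framing the argument as induction on $R$ obscures precisely the multi-scale structure you need; the paper replaces induction by the recursive algorithms \texttt{[alg 1]} and \texttt{[alg 2]} exactly so that the tangency data at all scales $r_m<\dots<r_n$ remain visible simultaneously when the Wolff axioms are applied. To repair the proposal you would need to state and track the analogues of Properties I--III (in particular the cap-localised $L^2_{\mathrm{avg}}$ bounds) across the full chain of scales, and exhibit the choice of $p_\ell$ and $\gamma_j$ that closes the system.
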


When $n=3$ and $k=2$ this corresponds to the main result from~\cite{Guth2016} and stronger estimates are now known in this case~\cite{Wang}. In all other dimensions $n \geq 4$, Theorem~\ref{main theorem} offers an improvement over what was previously known. When $n=4$ and $k =3$, the range \eqref{with} extends that given by~\cite[Theorem 3.2]{Demeter}.\footnote{In~\cite{Demeter} it is shown that the $n=4$ and $k = 3$ case of Theorem~\ref{main theorem} would follow from a strengthened version of the polynomial Wolff axiom theorem from~\cite{KR} involving a polynomial dependence on the degree. For the purposes of this article, no such explicit dependence on the degree is required, and therefore the $3$-broad inequality in $\R^4$ is established in a larger range than that stated in~\cite{Demeter}.} When $n \geq 5$, Theorem~\ref{main theorem} strengthens a (corollary of a) theorem of Guth~\cite{Guth} which showed that the inequality \eqref{broad} holds whenever $p \geq 2 + \tfrac{4}{n+k-2}$;\footnote{In~\cite{Guth}, strengthened versions of \eqref{broad} are established with $L^2$ rather than $L^{\infty}$ norms appearing on the right-hand side, and so our estimates are stronger in one sense and weaker in another.} observe that the range \eqref{with} in Theorem~\ref{main theorem} is strictly larger than this. 

Unfortunately, since the $k$-broad estimates are weaker than the corresponding linear estimates, it is difficult to pass directly from an inequality of the form \eqref{broad} to one of the form \eqref{local extension}. Nevertheless, a mechanism developed by Bourgain and Guth~\cite{BG2011} allows this passage under certain constraints on the exponent $p$. In particular, the following proposition is a consequence of the method developed in~\cite{BG2011}, as observed in \cite[Proposition~9.1]{Guth}.

\begin{proposition}[Bourgain--Guth~\cite{BG2011}, Guth~\cite{Guth}]\label{9.1} Let $n\ge 3$ and 
\begin{equation*}
2+\frac{4}{2n-k} \le p\le 2+\frac{2}{k-2}.
\end{equation*}
Then \eqref{broad} implies \eqref{local extension}.
\end{proposition}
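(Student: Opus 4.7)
My plan is to run the Bourgain--Guth broad/narrow decomposition and close an induction on the radius $R$. Fix a large parameter $K$ (to be chosen depending on $\varepsilon$) and partition the frequency domain $\{|\xi|\le 1\}$ into caps $\tau$ of radius $K^{-1}$, so that $g=\sum_\tau g_\tau$ with $g_\tau = g\mathbf 1_\tau$, and $Eg = \sum_\tau Eg_\tau$. At each $x\in B_R$, define the set $\mathcal S(x)$ of $K^{-1}$-caps $\tau$ whose contribution $|Eg_\tau(x)|$ is within a factor $K^{-C_n}$ of the maximum over all $\tau$.

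Next, apply a pigeonhole dichotomy. Either (\emph{broad case}) the caps in $\mathcal S(x)$ contain some $k$-tuple whose associated normals to the paraboloid are in sufficiently general position, meaning that they are not all contained in a small neighbourhood of a single $(k-1)$-dimensional subspace; or (\emph{narrow case}) all significant caps lie in a $K^{-1}$-neighbourhood of some $(k-1)$-plane $V(x)$. In the broad case, an elementary pigeonhole argument produces the pointwise bound $|Eg(x)|^p \lesssim K^{O(1)} \|Eg\|_{\BL{k}^p}(x)^p$ where the right-hand side is the integrand defining the $k$-broad norm; integrating and invoking the hypothesis~\eqref{broad} bounds the broad region by $C_\varepsilon K^{O(1)} R^\varepsilon \|g\|_\infty^p$.

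The narrow region is handled via parabolic rescaling and induction on $R$. Inside each $K^{-1}$-neighbourhood of a $(k-1)$-plane $V$, the operator $\sum_{\tau\subset N_{K^{-1}}(V)} Eg_\tau$ behaves, after rescaling the cap $\tau$ to the unit scale, like an extension operator attached to a $(k-1)$-dimensional piece of paraboloid inside $\mathbb{R}^n$. The lower bound $p\ge 2+\tfrac{4}{2n-k}$ is precisely the Stein--Tomas-type threshold at which the corresponding $(k-1)$-dimensional extension estimate can be invoked unconditionally, while the upper bound $p\le 2+\tfrac{2}{k-2}$ comes from interpolating the trivial $L^\infty$ bound on each single $Eg_\tau$ against the fact that at most $O(K^{k-2})$ narrow caps need to be summed. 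Together these produce the key gain of the form
\[
\int_{B_R\cap\mathrm{narrow}} |Eg|^p \; \lesssim \; K^{-\delta}\cdot C_\varepsilon R^\varepsilon \|g\|_\infty^p \; + \; \text{(contributions estimated at scale $K^{-2}R$ via the induction hypothesis)}
\]
for some $\delta=\delta(n,k,p)>0$ throughout the specified range of $p$.

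Finally, the two contributions are combined and the induction on $R$ is closed: choosing $K$ a sufficiently large power of $\log R$ (or simply a function of $\varepsilon$) ensures that the $K^{-\delta}$ gain from the narrow case dominates the $K^{O(1)}$ loss from the broad case, and that the accumulated $R^\varepsilon$-losses along the induction remain bounded by $R^{2\varepsilon}$, say. The main obstacle is the narrow case: one must carefully justify that, after parabolic rescaling, the lower-dimensional extension problem really does fall inside a regime where a non-trivial estimate is available, and that the numerology $2+\tfrac{4}{2n-k} \le p \le 2+\tfrac{2}{k-2}$ is exactly what is required for the resulting exponents to balance. Once this balancing is verified, standard $\varepsilon$-removal and the induction scheme deliver \eqref{local extension} in the full range stated.
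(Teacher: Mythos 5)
First, note that the paper does not actually prove Proposition~\ref{9.1}: it is imported wholesale from \cite{BG2011} and \cite[Proposition~9.1]{Guth}, so your sketch is being measured against the argument in those references rather than anything in this article. Your high-level strategy — decompose into $K^{-1}$-caps, run a broad/narrow dichotomy, estimate the broad part by the hypothesis \eqref{broad}, and handle the narrow part by parabolic rescaling of each cap to scale $R/K^2$ plus induction on the radius — is indeed the correct skeleton of that proof.

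However, there is a genuine gap at exactly the point you flag as "the main obstacle", and the mechanisms you propose to fill it are not the right ones. In the narrow case the caps $\tau$ with $\angle(G(\tau),V)\lesssim K^{-1}$ for a fixed $(k-1)$-plane $V$ number about $K^{k-2}$, and the crucial missing ingredient is an $\ell^2$-decoupling (orthogonality) inequality for this family, of the form
\begin{equation*}
\Big\|\sum_{\tau\,:\,\angle(G(\tau),V)\le K^{-1}}Eg_\tau\Big\|_{L^p(B_{K^2})}\lesssim_\epsilon K^{\epsilon}\Big(\sum_{\tau}\|Eg_\tau\|_{L^p(w_{B_{K^2}})}^2\Big)^{1/2},
\end{equation*}
valid precisely for $2\le p\le 2+\tfrac{2}{k-2}$ (this is \cite[Lemma~9.2]{Guth}); that is where the upper endpoint comes from. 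Converting the $\ell^2$ sum to an $\ell^p$ sum costs $K^{(k-2)(\frac12-\frac1p)}$, and balancing this loss against the gain $K^{-(n-1)+\frac{2n}{p}}$ obtained from parabolic rescaling of each cap and summation over all $\sim K^{n-1}$ caps is what forces $p\ge 2+\tfrac{4}{2n-k}$. Your proposed substitutes — invoking an unconditional "Stein--Tomas-type" estimate for a $(k-1)$-dimensional extension operator for the lower bound, and "interpolating the trivial $L^\infty$ bound against the $O(K^{k-2})$ cap count" for the upper bound — do not reproduce this numerology; in particular, summing the narrow caps by the triangle inequality instead of decoupling loses $K^{(k-2)(1-\frac1p)}$ and closes the induction only in a strictly smaller range. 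Two smaller issues: the dichotomy must be run on each $B_{K^2}$-ball against $A$-tuples of subspaces $V_1,\dots,V_A$ to match the definition \eqref{mu definition}--\eqref{k-broad norm definition} of the broad norm (a pointwise dichotomy with a single plane $V(x)$ does not directly produce the integrand of $\|Eg\|_{\BL{k}^p(B_{\>\!\!R})}$), and $K$ should be taken as an admissible constant depending on $\varepsilon$ rather than a power of $\log R$, consistent with how \eqref{broad} is formulated.
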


The original method of Bourgain--Guth~\cite{BG2011} was developed to convert certain multilinear inequalities of Bennett--Carbery--Tao~\cite{BCT2006} into linear estimates. It was later observed by Guth~\cite{Guth} that the method of~\cite{BG2011} does not require the full strength of the $k$-linear theory, but may instead take $k$-broad estimates as its input (which appear to be somewhat easier to prove\footnote{See \cite[Section 6.2]{GHI} for a detailed discussion of the relationship between $k$-broad and $k$-linear inequalities.}).

\renewcommand{\arraystretch}{1.2}
 \begin{figure}
\begin{tabular}{ ||c|c|c||c|c|c|| } 
 \hline
  $n=$ & $p >$  & \textbf{} & $n=$ & $p > $  & \textbf{} \\ 
   \hline 
  2 & \cellcolor{white} 4 & \cellcolor{white} Fefferman--Stein~\cite{Fefferman1970}  & 11 &  \cellcolor{yellow!50} $2+\frac{14}{55}$ &  \cellcolor{yellow!50} Theorem~\ref{main theorem}\\ 
   3 & \cellcolor{white} $3+\frac{3}{13}$ & \cellcolor{white} Wang~\cite{Wang}  &  12 & \cellcolor{white} $2+\frac{4}{17}$ &  \cellcolor{white} Guth~\cite{Guth}  \\
  4 &  \cellcolor{yellow!50} $2+\frac{1407}{1759}$ &  \cellcolor{yellow!50} Theorem~\ref{main theorem}\footnotemark[5]  &    13 &  \cellcolor{yellow!50} $2+\frac{100}{471}$ &  \cellcolor{yellow!50} Theorem~\ref{main theorem}  \\  
   5 &  \cellcolor{yellow!50} $2+\frac{12}{19}$ &  \cellcolor{yellow!50} Theorem~\ref{main theorem}  &  14 & \cellcolor{yellow!50}  $2+\frac{108}{541}$ &   \cellcolor{yellow!50} Theorem~\ref{main theorem}  \\ 
 6 &\cellcolor{white} $2+\frac{1}{2}$ & \cellcolor{white} Guth~\cite{Guth}  &  15 &  \cellcolor{yellow!50} $2+\frac{116}{637}$ &  \cellcolor{yellow!50} Theorem~\ref{main theorem}  \\ 
   7 &  \cellcolor{yellow!50} $2+ \frac{52}{123}$ &  \cellcolor{yellow!50} Theorem~\ref{main theorem}  &   16 &  \cellcolor{yellow!50} $2+\frac{62}{359}$ &    \cellcolor{yellow!50} Theorem~\ref{main theorem}   \\ 
  8 &\cellcolor{white} $2+\frac{4}{11}$ & \cellcolor{white} Guth~\cite{Guth}  &  17 & \cellcolor{yellow!50} $2+\frac{4}{25}$ & \cellcolor{yellow!50} Theorem~\ref{main theorem}  \\ 
  9 &  \cellcolor{yellow!50} $2+\frac{34}{107}$ & \cellcolor{yellow!50}  Theorem~\ref{main theorem}  &  18 &  \cellcolor{yellow!50} $2+\frac{7}{46}$ &    \cellcolor{yellow!50} Theorem~\ref{main theorem}   \\ 
  10 &\cellcolor{white} $2+\frac{2}{7}$ & \cellcolor{white} Guth~\cite{Guth}  &    19 &  \cellcolor{yellow!50} $2+\frac{1}{7}$ & \cellcolor{yellow!50} Theorem~\ref{main theorem} \\
 \hline
\end{tabular}
    \caption{The current state-of-the-art for the restriction problem in low dimensions. New results are \colorbox{yellow!50}{highlighted} and in most cases are deduced by combining Theorem~\ref{main theorem} with Proposition~\ref{9.1}.$^5$
    }
    \label{exponent table}
\end{figure}
\footnotetext[4]{This is deduced by combining Theorem~\ref{main theorem} with a more sophisticated version of Proposition~\ref{9.1}: see Remark~\ref{Demeter remark} below.}
\footnotetext[5]{These computations were carried out using the following Maple~\cite{Maple} code:
 \begin{verbatim}
n := [insert dimension]; 
p_broad := 2+8*(2n-1)/(n*(5n+2k-9)+k*(k-3) +4):   p_limit :=2+ 4/(2*n-k):
p_seq := [seq(max(eval(p_broad, k = i), eval(p_limit, k = i)), i = 2 .. n)]: 
new_exponent := min(p_seq);
\end{verbatim}}

Theorem~\ref{asymptotic theorem} follows as a direct consequence of Theorem~\ref{main theorem} and Proposition~\ref{9.1}. When applying Proposition~\ref{9.1}, the upper bound on $p$ is unimportant. However the lower bound, 
\begin{equation}\label{BG2011}
p\ge 2+\frac{4}{2n-k},
\end{equation}
is a limiting factor in the arguments, along with the condition \eqref{with} on the exponents in the $k$-broad inequality. In order to improve the state-of-the-art for the restriction conjecture one must choose an optimal $k$ so that neither of these two conditions is overly restrictive. For instance, if $n=5$ and $k=3$, then
$$
2+\frac{4}{10-3}\le p_5(3)=2+\frac{12}{19};
$$
 Theorem~\ref{main theorem} and Proposition~\ref{9.1} therefore imply that the restriction inequality holds for $p > 2+\frac{12}{19}$ when $n=5$. Other low dimensional cases can also be analysed directly and some examples can be found in Figure~\ref{exponent table} above.

In high dimensions, to derive the $\lambda$ coefficient featured in Theorem~\ref{asymptotic theorem}, we write $k=\nu n + O(1)$ for some $0<\nu<1$, so that, asymptotically,
\begin{equation}\label{boundbel0}
p_n(k)=2+\frac{16}{5+2\nu+\nu^2} n^{-1}+O(n^{-2}).
\end{equation}
On the other hand, with $k=\nu n + O(1)$, the condition \eqref{BG2011} can be rewritten as
\begin{equation}\label{boundbel}
p\ge 2+\frac{4}{2-\nu}n^{-1} +O(n^{-2}).
\end{equation}
The linear coefficients in \eqref{boundbel0} and \eqref{boundbel} are then equal when $\nu$ is the positive solution of the quadratic equation
$$
x^{2}+6x-3=0.
$$
Plugging this solution back into \eqref{boundbel} yields  \eqref{restriction} in the range
  $$
 p>2+\lambda n^{-1}+O(n^{-2})
 $$
with $\lambda = \frac{4}{2-\nu}=\frac{4}{5-2\sqrt{3}}$. 

It remains to prove Theorem~\ref{main theorem}, which will be the focus of the remainder of the article. 




\section{Notational conventions}\label{notation section}

From now on, we work with smooth, bounded functions $f$, $g$ or $h$ that map from the unit ball $B^{n-1}$ of $\mathbb{R}^{n-1}$ to the complex numbers, and we sometimes write $x \in \R^n$ as $x= (x', x_n)$ where $x'\in \mathbb{R}^{n-1}$. We call an $n$-dimensional ball $B_r$ of radius~$r$ an {\it $r$-ball} and an $(n-1)$-dimensional ball $\theta$ of radius $r^{-1/2}$ an {\it $r^{-1/2}$-cap}.  We call a cylinder of length~$r$ and radius $r^{1/2}$ an {\it $r$-tube}. The $\delta$-neighbourhood of a set $E$ will be denoted by $N_{\delta}E$.

The arguments will involve the {\it admissible parameters} $n$, $p$ and $\varepsilon$ and the constants in the estimates will be allowed to depend on these quantities. Given positive numbers $A, B \geq 0$ and a list of objects $L$, the notation $A \lesssim_L B$, $B \gtrsim_L A$ or $A = O_L(B)$ signifies that $A \leq C_L B$ where $C_L$ is a constant which depends only on the objects in the list and the admissible parameters. We write $A \sim_L B$ when both $A \lesssim_L B$ and $B \lesssim_L A$. We will also write $A \ll B$ or $B \gg A$ to denote that $A \leq C^{-1}B$ for some choice of $C \geq 1$ which can be taken to be as large as desired provided it is admissible.

 The cardinality of a finite set $A$ is denoted by $\#A$. A set $A'$ is said to be a \emph{refinement} of $A$ if $A' \subseteq A$ and $\#A' \gtrsim \#A$. In many cases it will be convenient to \emph{pass to a refinement} of a set $A$, by which we mean that the original set $A$ is replaced with some refinement.




\section{Overview}\label{Wolff axioms section}

\subsection{The polynomial Wolff axioms}  The key new geometric ingredient is the following theorem, which amounts to a confirmation of the Kakeya conjecture in a very specialised `algebraic' situation. It follows by combining \cite[Theorem 1.1]{KR} with Wongkew's lemma~\cite{Wongkew1993}, the latter of which bounds the measure of  a neighbourhood of a real algebraic variety over a ball.

\begin{theorem}[Polynomial Wolff axioms~\cite{KR}]\label{Katz--Rogers theorem} Let $\delta > 0$ and $c, r\ge1$. Let $\bZ \subseteq \R^n$ denote an $m$-dimensional algebraic variety
and let  $\mathbf{T}$ denote a collection of $r$-tubes contained in a ball of radius $2r$.  If the central axes of the tubes point in  $r^{-1/2}$-separated directions, then
 \begin{equation*}
\# \Big\{\, T \in \mathbf{T}\, :\, T \subseteq N_{cr^{1/2}}\bZ\, \Big\} \lesssim_{\Deg \bZ,\delta} c^{n-m}r^{\frac{m-1}{2} + \delta}.
\end{equation*}
\end{theorem}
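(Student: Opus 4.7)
The plan is to deduce Theorem~\ref{Katz--Rogers theorem} directly from the two ingredients cited in the statement. Wongkew's lemma bounds the measure of the neighbourhood of the $m$-dimensional variety $\bZ$ by
\[
\bigl|N_{cr^{1/2}}\bZ \cap B_{2r}\bigr| \lesssim_{\Deg \bZ} (cr^{1/2})^{n-m}\, r^m \sim c^{n-m}\, r^{(n+m)/2},
\]
and a single $r$-tube has volume $\sim r^{(n+1)/2}$. Dividing these two quantities produces the heuristic count $c^{n-m} r^{(m-1)/2}$, which already matches the target bound up to the $r^{\delta}$-loss. The job of [KR, Theorem~1.1] is to turn this volume-based heuristic into an honest count of $r^{-1/2}$-direction-separated $r$-tubes: it asserts that such tubes, each of which lies in the neighbourhood of a real algebraic variety of controlled degree, cannot be packed much denser than a family of genuinely disjoint tubes of the same total volume would allow, modulo a subpolynomial loss.

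Concretely, I would first reduce to the case $c \sim 1$. The set $N_{cr^{1/2}}\bZ \cap B_{2r}$ can be decomposed into $O_{\Deg \bZ}(c^{n-m})$ pieces, each contained in the $r^{1/2}$-neighbourhood of (a translate of) a local algebraic branch of $\bZ$ of controlled degree, essentially by partitioning the $(n-m)$-dimensional transverse directions to $\bZ$ into slabs of thickness $r^{1/2}$. Pigeonholing the tubes among these pieces and invoking the $c=1$ polynomial Wolff axiom of [KR] in each piece yields a bound of $r^{(m-1)/2 + \delta}$ per piece; summing over the $O(c^{n-m})$ pieces recovers the stated bound.

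The main obstacle is making this transverse decomposition clean: the normal directions to $\bZ$ are not globally well defined near singularities, and a literal translate of $\bZ$ need not be algebraic in a way that respects the degree bound. I expect this to be sidestepped by appealing to a version of [KR, Theorem~1.1] phrased directly in terms of the measure of a general containing set $E$ satisfying the relevant algebraic hypotheses, so that Wongkew's lemma feeds straight into the measure bound for $E = N_{cr^{1/2}}\bZ \cap B_{2r}$ and no intermediate slicing is needed. Verifying that [KR] is stated flexibly enough for this (or providing a simple re-scaling argument that accomplishes the same end) is the only step requiring genuine care.
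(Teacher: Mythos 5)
Your proposal is correct and is essentially the paper's own (one-line) argument: apply \cite[Theorem 1.1]{KR} --- which is stated for an arbitrary semialgebraic set of bounded complexity and bounds the number of direction-separated tubes it contains by its measure divided by the tube volume, up to an $r^{\delta}$-loss --- directly to $N_{cr^{1/2}}\bZ\cap B_{2r}$, whose measure Wongkew's lemma controls by $c^{n-m}r^{(n+m)/2}$. The transverse-slab reduction to $c\sim 1$ in your middle paragraph is therefore unnecessary, exactly as you suspect in your final paragraph.
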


This theorem was proven for $n=3$ by Guth~\cite{Guth2016} who later conjectured the general statement in~\cite{Guth} (see also~\cite{GZ}). The $n=4$ case was solved by Zahl before a proof in general dimensions was given in~\cite{KR}. Theorem~\ref{Katz--Rogers theorem} is referred to as the \emph{polynomial Wolff axioms} since the result can be interpreted as a verification that families of direction-separated tubes satisfy a natural polynomial generalisation of the classical (linear) \emph{Wolff axiom} introduced in~\cite{Wolff1995} (see~\cite{GZ} for further details).

\subsection{A brief description of the proof} The proof of Theorem~\ref{main theorem} extends an argument of Guth~\cite{Guth2016} in $\R^3$, by combining it with the later developments in higher dimensions from~\cite{Guth}. Both the articles~\cite{Guth2016} and~\cite{Guth} give comprehensive and highly readable introductory overviews of the core arguments; readers unfamiliar with these topics are encouraged to consult these sources for a detailed description of the main ideas. In high dimensions some complications arise which are not present in $\R^3$. For this reason, the proof given in Sections~\ref{k broad section} -~\ref{proof section} is structured somewhat differently from the proofs presented in~\cite{Guth2016, Guth}. These differences are highlighted and explained in the following subsection. 

The key ingredients of the proof of Theorem~\ref{main theorem} are as follows:

\subsubsection*{Wave packet decomposition} The first step of the argument is to employ the standard technique of decomposing the input function $f$ as a sum of localised pieces called \emph{wave packets}. In particular, fixing a large scale $R \gg 1$, one decomposes the domain $B^{n-1}$ as a union of $R^{-1/2}$-balls denoted by $\theta$ and referred to as $R^{-1/2}$-\emph{caps}. The function is then written as a sum of pieces $f = \sum_{(\theta,v)} f_{\theta,v}$ where each $f_{\theta,v}$ has support in the cap $\theta$ and the inverse Fourier transform of $f_{\theta,v}$ is concentrated in an $R^{1/2}$-ball centred at $v \in \R^{n-1}$. There are two key properties of this decomposition:
\begin{itemize}
    \item \textbf{Orthogonality:} Given any collection of wave packets $\mathbb{W}$ one has
    \begin{equation}\label{introduction: orthogonality}
    \Big\|\sum_{(\theta,v) \in \mathbb{W}} f_{\theta,v}\Big\|_2^2 \sim \sum_{(\theta,v) \in \mathbb{W}} \|f_{\theta,v}\|_2^2.
    \end{equation}
    \item \textbf{Spatial concentration:} On the ball $B(0,R)$, the function $Ef_{\theta,v}$ is essentially supported on an $R$-tube $T_{\theta,v}$ with direction governed by $\theta$ and position governed by $v$.
\end{itemize}
More precisely, the direction of $T_{\theta,v}$ is given by the normal direction to the paraboloid at the point $(\xi_{\theta}, |\xi_{\theta}|^2)$, where $\xi_{\theta}$ is the centre of $\theta$. Thus, $Ef = \sum_{(\theta,v)} Ef_{\theta,v}$ can be thought of as a sum of oscillating, normalised characteristic functions of tubes, which point in many different directions. Understanding the incidence geometry of these tubes is a key consideration in the restriction problem. 

\subsubsection*{Polynomial partitioning} A useful tool for studying the incidence-geometric problems arising from the wave packet decomposition is the \emph{polynomial partitioning method}. This method was introduced by Guth and Katz~\cite{GK2015} in their resolution of the Erd\H{o}s distance conjecture and was first applied to the restriction problem by Guth in~\cite{Guth} (the latter work also incorporated a refinement to the original partitioning method of \cite{GK2015} due to Solymosi and Tao \cite{ST}). The basic idea is a divide-and-conquer-style argument: one begins by finding a polynomial $P$ of low degree which partitions the mass of $\|Ef\|_{\BL{k}^p(B_{\>\!\!R})}$ into equal size pieces. More precisely, let $Z(P) := \{ z \in \R^n : P(z) = 0\}$ denote the zero set of $P$ and $\cell(P)$  the set of connected components of $\R^n \setminus Z(P)$. These connected components are referred to as \emph{cells}. The polynomial $P$ can then be chosen so that the $\|Ef\|_{\BL{k}^p(O')}$ are (essentially) equal as $O'$ varies over $\cell(P)$. Due to geometric (and underlying uncertainty principle) considerations, one actually works with a `blurred out' version of the variety $Z(P)$ given by the $R^{1/2}$-neighbourhood $W := N_{R^{1/2}}Z(P)$ and referred to as the \emph{wall}. Defining the collection of slightly shrunken cells $\O := \{O'\setminus W : O' \in \cell(P)\}$, the following simple, yet vital, geometric property holds:
\begin{equation}\label{shrunken property}
\parbox{\dimexpr\linewidth-4em}{%
    \strut
    Whenever $T_{\theta,v}$ enters a shrunken cell $O=O' \setminus W$, the core line necessarily enters the original cell $O'$.
    \strut
  }
\end{equation}

Unlike the original cells, collectively the $O \in \O$  may only account for a small proportion of the mass of $\|Ef\|_{\BL{k}^p(B_{\>\!\!R})}$. There are two cases to consider:\\

{\textbf{Cellular case:}} The mass of $\|Ef\|_{\BL{k}^p(B_{\>\!\!R})}$ concentrates on the $O \in \O$ in the sense that
    \begin{equation*}
        \|Ef\|_{\BL{k}^p(B_{\>\!\!R})}^p \lesssim \sum_{O \in \O}\|Ef\|_{\BL{k}^p(O)}^p.
    \end{equation*}
    In this situation, one defines $f_O := \sum_{(\theta,v) \in \T_O} f_{\theta,v}$ where $\T_O$ denotes the collection of wave packets $(\theta,v)$ for which $T_{\theta,v} \cap O \neq \emptyset$. By the spatial concentration property of the wave packets 
       \begin{equation*}
        \|Ef\|_{\BL{k}^p(B_{\>\!\!R})}^p \lesssim \sum_{O \in \O}\|Ef_O\|_{\BL{k}^p(O)}^p.
    \end{equation*}
    The key observation here is that each $(\theta,v)$ can only belong to a small number (in particular, $\deg P + 1$) of the sets $\T_O$. This is due to \eqref{shrunken property} and the fact that, by the fundamental theorem of algebra (or B\'ezout's theorem), the core line of a tube $T_{\theta,v}$ can only enter $\deg P + 1$ cells from $\cell(P)$. This observation can be interpreted as saying the sets $\T_O$ are `almost disjoint' which implies, via \eqref{introduction: orthogonality}, that the $f_{O}$ are `almost orthogonal'. Consequently, one can pass to the cells and analyse them individually. This forms the basis of a recursive procedure.\setcounter{footnote}{5}\footnote{This part of the argument is fairly delicate and the almost orthogonal property needs to be precisely quantified in terms of $\deg P$. For the purposes of this sketch, the full details are omitted.}\\
    
{\textbf{Algebraic case:}} The mass of $\|Ef\|_{\BL{k}^p(B_{\>\!\!R})}$ concentrates on the wall in the sense that
    \begin{equation*}
        \|Ef\|_{\BL{k}^p(B_{\>\!\!R})}^p \lesssim \|Ef\|_{\BL{k}^p(W)}^p.
    \end{equation*} 
    Here it suffices to consider only those wave packets $(\theta,v)$ for which $T_{\theta,v} \cap W \neq \emptyset$. A tube $T_{\theta,v}$ can intersect $W$ in one of two ways: either \emph{tangentially} or \emph{transversally}. The analysis is further divided into two subcases depending on whether the main contribution to $\|Ef\|_{\BL{k}^p(B_{\>\!\!R})}^p$ arises from tangential or transverse wave packets. 

In the transversal subcase the tubes can be thought of as passing directly through the wall. This situation can be treated in a manner similar to the cellular case, this time using a continuum version of B\'ezout's theorem to show that any given tube can intersect~$W$ transversally in relatively few places. 

It remains to study the tangential subcase. Here the $T_{\theta,v}$ can be thought of as being contained in $W$ and making a small angle with tangent spaces at nearby points of the variety $Z(P)$. 

\subsubsection*{Dimensional reduction} The polynomial partitioning argument sketched above can be interpreted as a dimensional reduction. If either the cellular or the transverse algebraic case holds, then one can obtain acceptable estimates for $\|Ef\|_{\BL{k}^p(B_{\>\!\!R})}$. Thus, it suffices to consider the situation where the wave packets of $f$ are all tangent to some variety of dimension $n-1$. By iterating this dimensional reduction procedure,\footnote{A number of serious complications arise in implementing this iteration scheme and, in particular, in dealing with the transverse algebraic case. This part of the argument requires what are known as \emph{transverse equidistribution estimates}: these inequalities are briefly mentioned below, see the introductory discussion in~\cite{Guth} for further details.} it becomes important to understand what can be said when the wave packets of $f$ are all tangent to some variety of dimension $m$ for any value $0 \leq m \leq n-1$. 

\subsubsection*{Key estimates in the tangential case} The reduction to tangential situations, as outlined above, can be exploited in a number of ways: 
\begin{itemize}
     \item \textbf{Vanishing property of the $k$-broad norms:} The definition of the $k$-broad norms implies that if the wave packets of $f$ are all tangential to a variety of dimension $m < k$, then $\|Ef\|_{\BL{k}^p(B_{\>\!\!R})}$ essentially vanishes. Thus, one need only consider tangency properties with respect to varieties of dimension at least $k$. Using this fact alone, one may prove $k$-broad estimates in the range $p > \frac{2k}{k-1}$ corresponding to the Bennett--Carbery--Tao multilinear restriction theorem~\cite{BCT2006}.\footnote{Indeed, this follows by applying the argument of \cite{Guth} but ignoring gains coming from \emph{transverse equidistribution} (see the following bulletpoint).}   
     \item \textbf{Transverse equidistribution estimates:} These inequalities were introduced by Guth~\cite{Guth} and heavily exploit the curvature properties of the paraboloid, allowing for $k$-broad estimates beyond the $p > \frac{2k}{k-1}$ range. The basic idea behind the transverse equidistribution estimates is recalled below in Section~\ref{transverse equidistribution section}.
     \item \textbf{The polynomial Wolff axioms:} Given a family $\mathbf{T}$ of $R$-tubes lying in the $R^{1/2}$-neighbourhood of a variety, the polynomial Wolff axioms limit the number of different directions in which the $T \in \mathbf{T}$ can lie. Thus, if the wave packets of $f$ are all tangent to some low dimensional variety, then $f$ must be supported on very few caps $\theta$ (since the caps $\theta$ correspond to the directions of the tubes $T_{\theta,v}$). The small support of $f$ can be exploited via H\"older's inequality to obtain favourable $k$-broad estimates.  
\end{itemize}

\subsection{Induction versus recursion}  When applying the polynomial Wolff axioms to the restriction problem in high dimensions, a number of complications arise which are not present in the $\R^3$ case treated in~\cite{Guth2016}. The root of these complications lies in the fact that, in contrast with $\R^3$ where one only need consider tangency conditions with respect to 2-surfaces, in higher dimensions one must consider tangency conditions with respect to surfaces of many different dimensions.

 The core argument sketched in the previous subsection can be implemented as either an induction or a recursion argument. 
The original articles~\cite{Guth2016} and~\cite{Guth} make heavy use of mathematical induction (inducting on a number of quantities including the choice of scale $R$); this has the advantage of yielding a clean and concise argument, but unfortunately useful structural properties are potentially hidden. From the perspective of a recursive algorithm one may gain a more detailed understanding of the argument at each stage of the iterative process; this is the approach taken in the present article. 

There is certainly a precedent for the recursive approach: for instance, in the fourth section of~\cite{BG2011},  Bourgain and Guth reformulate their key induction-on-scale argument as a recursive procedure to allow for the use of additional information coming from $X$-ray transform estimates (see also~\cite{Luca, Temur2014} for an elaboration of this argument). Similarly, in a recent article of Wang~\cite{Wang}, the induction-on-scale procedure of~\cite{Guth2016} was rewritten as a recursion; this permitted a more detailed analysis of the underlying geometry of the extension operator and led to the current best known bounds for the restriction conjecture in $\R^3$.
 
 When written in the form of a recursive algorithm, the polynomial partitioning argument of~\cite{Guth} can be interpreted as a structural statement. Following the discussion in the previous subsection, one may think of the input function $f$ as being broken into many different pieces where, roughly, each piece is made up of wave packets tangential to a low dimensional variety at some scale (there may be other pieces which do not have this property, but they arise from the cellular or transverse algebraic cases and satisfy favourable estimates). Thus, the structural statement allows one to focus on estimating the `tangential' pieces $\{f_{\tang}\}$ of the function. This `tangential reduction' is then exploited via the key estimates described in the previous subsection. 
 
 In high dimensional cases, however, the $f_{\tang}$ tend to enjoy further structural properties which one could potentially utilise in order to improve the range of estimates guaranteed by Theorem~\ref{main theorem}. Indeed, typically a given $f_{\tang}$ is not only tangent to a single variety $\bZ$ at a single scale $r$, but it satisfies certain tangency conditions with respect to a whole sequence of scales $r_m < \dots < r_n$ and a corresponding sequence of varieties $\bZ_m, \dots, \bZ_n$ with $\dim \bZ_{i} = i$ for $m \leq i \leq n$. These `nested' conditions could  potentially lead to further gains for the restriction exponent. To carry out such a programme, however, one would have to effectively analyse properties of the $f_{\tang}$ across many distinct scales $r_m < \dots < r_n$; this situation  lends itself more naturally to a recursive algorithm, rather than an inductive argument.




\section{Broad norms}\label{k broad section}

Here we recall the definition and basic properties of the $k$-broad norms from~\cite{Guth2016} and \cite{Guth}. For a detailed motivation of this definition and its relation to the multilinear restriction theory of~\cite{BCT2006} the reader is referred to~\cite{Guth} and \cite[Section 6.2]{GHI}. 

Fix some large $R \gg 1$ and a ball $B_{\>\!\!R} \subset \R^n$. Decompose the unit ball $B^{n-1}$ into finitely-overlapping balls $\tau$ of radius $K^{-1}$, where $K$ is a large constant satisfying $1 \ll K \ll R$. These $(n-1)$-dimensional balls are referred to as \emph{$K^{-1}$-caps}. Given a function $f$, supported on $B^{n-1}$, we write $f = \sum_{\tau} f_{\tau}$ where $f_{\tau} := f\psi_{\tau}$ for $(\psi_{\tau})_{\tau}$ a partition of unity subordinate to the caps $\tau$. Let $G \colon B^{n-1} \to S^{n-1}$ denote the Gauss map associated to the paraboloid, given explicitly by
\begin{equation}\label{Gauss map}
G(\xi) := \frac{1}{(1 + 4|\xi|^2)^{1/2}}\big(-2\xi,1\big).
\end{equation}
Given a pair of non-zero vectors $v, v' \in \R^n$, let $\angle(v,v')$ denote the (unsigned) angle between them. If $V \subseteq \R^n$ is a linear subspace, then let $\angle(G(\tau), V)$ denote the minimum of $\angle(v,v')$  over all pairs of non-zero vectors $v \in V$ and $v' \in G(\tau)$. 

The spatial ball $B_{\>\!\!R}$ is also decomposed into relatively small balls $B_{\>\!\!K^2}$ of radius~$K^2$. In particular, fix $\mathcal{B}_{\>\!\!K^2}$ a collection of finitely-overlapping $K^2$-balls which are centred in and cover $B_{\>\!\!R}$. Then, for $B_{\>\!\!K^2} \in \mathcal{B}_{\>\!\!K^2}$, define
\begin{equation}\label{mu definition}
\mu_{Ef}(B_{\>\!\!K^2}) := \min_{V_1,\dots,V_A \in \mathrm{Gr}(k-1, n)} \Bigg(\max_{\tau : \angle(G(\tau), V_a) > K^{-1}  \textrm{ for } 1 \leq a \leq A} \|Ef_{\tau}\|_{L^p(B_{\>\!\!K^2})}^p \Bigg);
\end{equation}
here $\mathrm{Gr}(k-1, n)$ is the Grassmannian manifold of all $(k-1)$-dimensional subspaces in $\R^n$. For $U\subseteq \R^n$ the \emph{$k$-broad norm over $U$} can then be defined as
\begin{equation}\label{k-broad norm definition}
\|Ef\|_{\mathrm{BL}^p_{k,A}(U)} := \Bigg(\sum_{B_{\>\!\!K^2} \in \mathcal{B}_{\>\!\!K^2}} \frac{|B_{\>\!\!K^2} \cap U|}{|B_{\>\!\!K^2}|}\mu_{Ef}(B_{\>\!\!K^2}) \Bigg)^{1/p}.
\end{equation}
With this definition, the inequality \eqref{broad} from Section~\ref{reduction section} is understood to hold for ${\|Ef\|_{\mathrm{BL}^p_{k}(U)} := \|Ef\|_{\mathrm{BL}^p_{k,A}(U)}}$ for some choice of $A \sim 1$.

Before continuing it is perhaps useful to clarify the relative sizes of the parameters. Given any $p$ and $\varepsilon$, when proving a broad norm estimate \eqref{broad} it is always assumed that $K$ and $A$ are large but admissible (that is, they depend only on $n$, $p$ and $\varepsilon$). The parameter $K$ must be chosen large in order for Proposition~\ref{9.1} to hold (see \cite{Guth} and \cite{BG2011}) whilst the parameter $A$ must be chosen large in order to facilitate multiple applications of Lemma \ref{triangle inequality lemma} and Lemma \ref{logarithmic convexity inequality lemma}, as described below. Nevertheless, it is always possible to make admissible choices of $K$ and $A$. The parameter $R$, on the other hand, is an arbitrarily large number which will be, in general, non-admissible.  

As mentioned in Section~\ref{reduction section}, the key advantage of working with $k$-broad norms rather than the classical $L^p$-norms is that, roughly, they vanish whenever the mass of $Ef$ is concentrated around a set of dimension less than $k$. This property is fundamental to the proof of Theorem~\ref{main theorem}, but to make it precise requires a number of preliminary definitions and therefore the details are postponed until Lemma~\ref{key k-broad lemma} below.

\subsection{Basic properties} It is easy to see that $\|Ef\|_{\mathrm{BL}^p_{k,A}(U)}$ is not a norm in any traditional sense. Nevertheless, as noted in~\cite{Guth}, it does satisfy weak variants of certain key properties of $L^p$-norms.

\begin{lemma}[Finite subadditivity]\label{subadditivity lemma} Let $U_1, U_2 \subseteq \R^n$, $1 \leq p < \infty$, and $A\in\mathbb{N}$. Then
\begin{equation*}
\|Ef\|_{\mathrm{BL}^p_{k,A}(U_1 \cup U_2)}^p \leq \|Ef\|_{\mathrm{BL}^{p}_{k,A}(U_1)}^p + \|E f\|_{\mathrm{BL}^{p}_{k,A}(U_2)}^p
\end{equation*} 
holds for all integrable $f \colon B^{n-1} \to \C$.
\end{lemma}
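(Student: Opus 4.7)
The plan is to unfold the definition of the $k$-broad norm and observe that the inequality reduces to the finite subadditivity of Lebesgue measure, applied termwise to the sum over the balls $B_{K^2} \in \mathcal{B}_{K^2}$.

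More concretely, I would start by writing out both sides of the proposed inequality using \eqref{k-broad norm definition}. The left-hand side becomes
\begin{equation*}
\|Ef\|_{\mathrm{BL}^p_{k,A}(U_1 \cup U_2)}^p = \sum_{B_{\>\!\!K^2} \in \mathcal{B}_{\>\!\!K^2}} \frac{|B_{\>\!\!K^2} \cap (U_1 \cup U_2)|}{|B_{\>\!\!K^2}|}\mu_{Ef}(B_{\>\!\!K^2}),
\end{equation*}
while the right-hand side is
\begin{equation*}
\sum_{B_{\>\!\!K^2} \in \mathcal{B}_{\>\!\!K^2}} \frac{|B_{\>\!\!K^2} \cap U_1| + |B_{\>\!\!K^2} \cap U_2|}{|B_{\>\!\!K^2}|}\mu_{Ef}(B_{\>\!\!K^2}).
\end{equation*}
Since $\mu_{Ef}(B_{\>\!\!K^2}) \geq 0$ (being a min-max of $L^p$-norms raised to the $p$-th power) and $|B_{\>\!\!K^2} \cap (U_1 \cup U_2)| \leq |B_{\>\!\!K^2} \cap U_1| + |B_{\>\!\!K^2} \cap U_2|$ by finite subadditivity of Lebesgue measure, the claim follows by comparing the two sums termwise.

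The key point is that the factor $\mu_{Ef}(B_{\>\!\!K^2})$ depends only on the ball $B_{\>\!\!K^2}$ and not on $U$, so the $U$-dependence of $\|Ef\|_{\mathrm{BL}^p_{k,A}(U)}^p$ enters entirely through the weights $|B_{\>\!\!K^2} \cap U|/|B_{\>\!\!K^2}|$, which are trivially subadditive in $U$. There is essentially no obstacle: unlike the corresponding statement for a genuine norm (which would involve subadditivity under summation of functions, and would fail here due to the max over $\tau$ inside $\mu_{Ef}$), the present lemma only asserts subadditivity in the set-theoretic variable $U$, which is immediate. The only thing to double-check is that $p \geq 1$ is not actually required for this step — indeed, it is not — but the hypothesis is harmless and keeps the formulation consistent with standard conventions.
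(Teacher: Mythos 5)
Your proof is correct and is precisely the argument the paper has in mind: the paper simply states that the lemma "is an immediate consequence of the definition of the $k$-broad norms," and your unfolding of \eqref{k-broad norm definition} together with the subadditivity of Lebesgue measure in the weights $|B_{\>\!\!K^2}\cap U|/|B_{\>\!\!K^2}|$ is exactly that consequence. Your observation that $\mu_{Ef}(B_{\>\!\!K^2})$ is independent of $U$ correctly identifies why the statement is trivial.
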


This is an immediate consequence of the definition of the $k$-broad norms. A slightly less trivial observation is that $\|Ef\|_{\mathrm{BL}^p_{k,A}(U)}$ also satisfies weak versions of the triangle and logarithmic convexity inequalities. 

\begin{lemma}[Triangle inequality]\label{triangle inequality lemma} Let $U \subseteq \R^n$, $1 \leq p < \infty$ and $A \in \mathbb{N}$. Then
\begin{equation*}
\|E(f_1 + f_2)\|_{\mathrm{BL}^p_{k,2A}(U)} \lesssim \|Ef_1\|_{\mathrm{BL}^p_{k,A}(U)} + \|E f_2\|_{\mathrm{BL}^p_{k,A}(U)}
\end{equation*} 
holds for all integrable $f_1, f_2 \colon B^{n-1} \to \C$.
\end{lemma}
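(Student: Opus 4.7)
The plan is to reduce everything, ball-by-ball, to the classical $L^p$-triangle inequality, exploiting the fact that the minimum in the definition of the $k$-broad norm is taken over $A$-tuples of subspaces and that \emph{concatenating} two such $A$-tuples produces a $2A$-tuple which is admissible for the $\mathrm{BL}^p_{k,2A}$-norm. I would begin by fixing a ball $B_{K^2} \in \mathcal{B}_{K^2}$ and, for each $i = 1,2$, selecting subspaces $V_1^i,\dots,V_A^i \in \mathrm{Gr}(k-1,n)$ that attain the minimum defining $\mu_{Ef_i}(B_{K^2})$ (if the min is not attained, an $\varepsilon$-near minimiser works with no real change).

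Consider the concatenated $2A$-tuple consisting of all $V_a^i$ with $i \in \{1,2\}$ and $1 \le a \le A$. Any $K^{-1}$-cap $\tau$ that makes angle greater than $K^{-1}$ with every element of this combined list is, in particular, admissible for each of the two individual $A$-tuples. Since $E$ is linear and the partition-of-unity localisation commutes with addition, $E(f_1+f_2)_\tau = Ef_{1,\tau} + Ef_{2,\tau}$; the classical $L^p$-triangle inequality on $B_{K^2}$ combined with $(a+b)^p \le 2^{p-1}(a^p+b^p)$ then yields
\begin{equation*}
\|E(f_1+f_2)_\tau\|_{L^p(B_{K^2})}^p \le 2^{p-1}\bigl(\|Ef_{1,\tau}\|_{L^p(B_{K^2})}^p + \|Ef_{2,\tau}\|_{L^p(B_{K^2})}^p\bigr) \le 2^{p-1}\bigl(\mu_{Ef_1}(B_{K^2}) + \mu_{Ef_2}(B_{K^2})\bigr),
\end{equation*}
where the final step uses that $\tau$ is admissible for each of the chosen minimisers. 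Since this bound is uniform in admissible $\tau$, taking the max over such caps and then noting that the min over arbitrary $2A$-tuples is no larger than its value at the concatenated tuple gives the pointwise comparison $\mu_{E(f_1+f_2)}(B_{K^2}) \le 2^{p-1}\bigl(\mu_{Ef_1}(B_{K^2}) + \mu_{Ef_2}(B_{K^2})\bigr)$, where the left-hand $\mu$ is now the version associated with $2A$ subspaces.

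The remainder is formal bookkeeping: multiplying by the weights $|B_{K^2} \cap U|/|B_{K^2}|$, summing over $\mathcal{B}_{K^2}$, taking the $p$-th root, and applying $(x+y)^{1/p} \le x^{1/p} + y^{1/p}$ gives the stated inequality with implicit constant depending only on $p$. There is essentially no analytic obstacle here: the content is purely combinatorial manipulation of the Grassmannian parameter $A$, and in fact the same concatenation trick --- with logarithmic convexity in $L^p$ replacing the triangle inequality --- also yields the companion Lemma~\ref{logarithmic convexity inequality lemma}. The one subtle point worth flagging is that the near-minimising $A$-tuples depend on the ball $B_{K^2}$, but this is harmless since the comparison is carried out one ball at a time and only the final summation couples the balls together.
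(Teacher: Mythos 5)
Your argument is correct and is precisely the standard concatenation-of-$A$-tuples proof from Guth's paper, to which this article defers for Lemmas~\ref{triangle inequality lemma} and~\ref{logarithmic convexity inequality lemma}. Nothing to add.
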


\begin{lemma}[Logarithmic convexity]\label{logarithmic convexity inequality lemma} Let $U \subseteq \R^n$, $1 \leq p, p_0, p_1 < \infty$ and $A \in \mathbb{N}$. Suppose that  $0 \leq \alpha \leq 1$ satisfies
\begin{equation*}
\frac{1}{p} = \frac{1-\alpha}{p_0} + \frac{\alpha}{p_1}.
\end{equation*}
Then
\begin{equation*}
\|Ef\|_{\mathrm{BL}^p_{k,2A}(U)} \lesssim \|Ef\|_{\mathrm{BL}^{p_0}_{k,A}(U)}^{1-\alpha} \|E f\|_{\mathrm{BL}^{p_1}_{k,A}(U)}^{\alpha}
\end{equation*} 
holds for all integrable $f \colon B^{n-1} \to \C$.
\end{lemma}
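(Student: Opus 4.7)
The plan is to mimic the classical proof that the $L^p$-norms are logarithmically convex, with one twist: because the definition of $\mu_{Ef}(B_{K^2})$ involves a minimum over $A$-tuples of $(k-1)$-planes, a single optimal tuple cannot simultaneously serve both endpoint exponents $p_0$ and $p_1$, and one must concatenate two separately optimal $A$-tuples into a single $2A$-tuple. This ``doubling of $A$'' is exactly parallel to what one would do in the proof of Lemma~\ref{triangle inequality lemma}, and it accounts for the passage from $A$ to $2A$ in the statement. Beyond this, the argument splits naturally into a per-ball pointwise estimate and a H\"older step across balls.

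Per ball $B_{K^2}$, I would first choose $V_1, \dots, V_A \in \mathrm{Gr}(k-1, n)$ realising (to within any prescribed error) the minimum defining the broadness density at exponent $p_0$, and then choose $V_{A+1}, \dots, V_{2A}$ doing the same at exponent $p_1$. For the concatenated $2A$-tuple, any cap $\tau$ satisfying $\angle(G(\tau), V_a) > K^{-1}$ for all $1 \leq a \leq 2A$ is admissible for both of the original optimisations, so
$$\|Ef_\tau\|_{L^{p_i}(B_{K^2})}^{p_i} \leq \mu^{(p_i,A)}_{Ef}(B_{K^2}) \qquad (i = 0,1),$$
where $\mu^{(q,A)}_{Ef}$ denotes the broadness density defined in \eqref{mu definition} with exponent $q$ and parameter $A$. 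The standard logarithmic convexity of $L^p$-norms on $B_{K^2}$ then gives
$$\|Ef_\tau\|_{L^p(B_{K^2})}^p \leq \|Ef_\tau\|_{L^{p_0}(B_{K^2})}^{p(1-\alpha)} \|Ef_\tau\|_{L^{p_1}(B_{K^2})}^{p\alpha}.$$
Setting $\beta_0 := p(1-\alpha)/p_0$ and $\beta_1 := p\alpha/p_1$, the hypothesis on $p, p_0, p_1, \alpha$ is precisely $\beta_0 + \beta_1 = 1$. Combining the two displays and taking a maximum over admissible $\tau$ yields
$$\mu^{(p,2A)}_{Ef}(B_{K^2}) \leq \bigl[\mu^{(p_0, A)}_{Ef}(B_{K^2})\bigr]^{\beta_0} \bigl[\mu^{(p_1, A)}_{Ef}(B_{K^2})\bigr]^{\beta_1}.$$

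To globalise, I would write $w(B_{K^2}) := |B_{K^2} \cap U|/|B_{K^2}|$ and use $w = w^{\beta_0} \cdot w^{\beta_1}$ to distribute the weight across the two factors. H\"older's inequality with conjugate exponents $1/\beta_0$ and $1/\beta_1$ applied to $\sum_{B_{K^2}} w \, \mu^{(p,2A)}_{Ef}$ then gives
$$\|Ef\|_{\mathrm{BL}^p_{k,2A}(U)}^p \leq \|Ef\|_{\mathrm{BL}^{p_0}_{k,A}(U)}^{p_0 \beta_0} \|Ef\|_{\mathrm{BL}^{p_1}_{k,A}(U)}^{p_1 \beta_1},$$
and since $p_0 \beta_0 = p(1-\alpha)$ and $p_1 \beta_1 = p\alpha$, taking $p$th roots yields the claimed estimate.

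I do not foresee a genuine obstacle here: the only nontrivial bookkeeping is the concatenation of optimising subspaces, and everything else is a routine two-scale application of H\"older's inequality (within each $K^2$-ball via the classical log-convexity of Lebesgue norms, and then across the collection $\mathcal{B}_{K^2}$). The proof is structurally parallel to, and only slightly more intricate than, that of Lemma~\ref{triangle inequality lemma}.
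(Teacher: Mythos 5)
Your argument is correct and is precisely the standard proof (the one in Guth's paper, to which the text defers): concatenating the two near-optimal $A$-tuples of subspaces to handle the min, applying classical logarithmic convexity of Lebesgue norms on each $B_{K^2}$, and then H\"older's inequality with exponents $1/\beta_0$, $1/\beta_1$ across the collection $\mathcal{B}_{K^2}$. The bookkeeping with $\beta_0 + \beta_1 = 1$ and the weights $w^{\beta_0}w^{\beta_1}$ is exactly right, so there is nothing to add.
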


The proofs of these estimates are entirely elementary and can be found in~\cite{Guth}. The parameter $A$ appears in the definition of the $k$-broad norm to allow for these weak triangle and logarithmic convexity inequalities.

\subsection{Linear versus $k$-broad estimates} Any $k$-broad estimate is weaker than the corresponding linear estimate. For instance, assuming that the local extension estimate \eqref{local extension} holds, given $\varepsilon > 0$ and $1 \le r \leq R$, it follows that
\begin{align*}
    \|Ef\|_{\BL{k,A}^p(B_{r})} &\leq \Bigg( \sum_{\tau : K^{-1}\mathrm{-cap}} \sum_{\substack{B_{\>\!\!K^2} \in \cB_{\>\!\!K^2}\\ B_{\>\!\!K^2} \cap B_{r} \neq \emptyset}} \| Ef_{\tau} \|_{L^p(B_{\>\!\!K^2})}^p\Bigg)^{1/p} \lesssim K^{O(1)} r^{\varepsilon} \|f \|_{\infty};
\end{align*}
since $K$ is just a constant (in particular, it is chosen independently of $R \gg 1$), this implies \eqref{broad}. 

From the preceding observation, $L^p$ estimates for the extension operator translate into $k$-broad inequalities. In view of this, it is useful to briefly recall some standard $L^2$ estimates for the extension operator. Plancherel's theorem implies the familiar conservation of energy identity 
\begin{equation}\label{energy identity}
\int_{\R^{n-1}} |Ef(x',x_n)|^2 \,\ud x' = (2\pi)^{n-1} \|f\|_{2}^2
\end{equation}
and one may integrate in the $x_n$ variable and take square roots to conclude that
\begin{equation*}
    \|Ef\|_{L^2(B_{r})} \lesssim r^{1/2} \|f\|_{2}
\end{equation*}
for any $r$-ball $B_{r}$. Arguing as above, one immediately arrives at the $k$-broad variant
\begin{equation}\label{broad energy identity}
    \|Ef\|_{\BL{k,A}^2(B_{r})} \lesssim r^{1/2} \|f\|_{2},
\end{equation}
valid for all $r \geq 1$.







\section{Polynomial partitioning}

\subsection{Basic partitioning}\label{basic partitioning section} In this section the relevant algebraic and topological ingredients for the proof of Theorem~\ref{main theorem} are reviewed. In particular, the key polynomial partitioning theorem is stated, which is adapted from previous works of Guth~\cite{Guth2016, Guth} on the restriction conjecture. 

\begin{definition} Given any collection of polynomials $P_1, \dots, P_{n-m} \colon \R^n \to \R$ the common zero set
\begin{equation*}
    Z(P_1, \dots, P_{n-m}) := \Big\{\,x \in \R^n \,:\, P_1(x) = \cdots = P_{n-m}(x) = 0\,\Big\}
\end{equation*}
will be referred to as a \emph{variety}.\footnote{The ideal generated by the $P_j$ is \emph{not} required to be irreducible.} Given a variety $\bZ = Z(P_1, \dots, P_{n-m})$, define its \emph{(maximum) degree} to be the number 
\begin{equation*}
\overline{\deg}\,\bZ := \max \{\deg P_1, \dots, \deg P_{n-m} \}.
\end{equation*}
\end{definition}
It will often be convenient to work with varieties which satisfy the additional property that
\begin{equation}\label{non singular variety}
\bigwedge_{j=1}^{n-m}\nabla P_j(z) \neq 0 \qquad \textrm{for all $z \in \bZ = Z(P_1, \dots, P_{n-m})$.}
\end{equation}
In this case the zero set forms a smooth $m$-dimensional submanifold of $\R^n$ with a (classical) tangent space $T_z\bZ$ at every point $z \in \bZ$. A variety $\bZ$ which satisfies \eqref{non singular variety} is said to be an \emph{$m$-dimensional transverse complete intersection}.

Of particular interest is the case of hypersurfaces, where $m = n-1$. Given a polynomial $P \colon \R^n \to \R$ consider the collection  $\cell(P)$  
 of connected components of $\R^n \setminus Z(P)$. As in Section~\ref{Wolff axioms section}, each $O \in \cell(P)$ is referred to as a \emph{cell} cut out by the variety $Z(P)$ and the cells are thought of as partitioning the ambient euclidean space into a finite collection of disjoint regions. 
 
\begin{theorem}[Guth~\cite{Guth2016}]\label{simple partitioning theorem} Fix $d \in \N$ and suppose $F \in L^1(\R^n)$ is non-negative. Then there exists a polynomial $P \colon \R^n \to \R$ of degree at most $d$ such that:
\begin{enumerate}[i)]
\item $\#\cell(P) \sim d^n$;
\item The integrals $\int_{O} F$ for $O \in \cell(P)$ are all equal.
\end{enumerate}
\end{theorem}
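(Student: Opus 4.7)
The plan is to iterate the polynomial ham sandwich theorem (Stone--Tukey for $N=1$, and the higher-degree extension due to Gromov): for any $N$ finite Borel measures $\mu_1,\ldots,\mu_N$ on $\R^n$ there exists a nonzero polynomial $Q$ of degree $\lesssim_n N^{1/n}$ satisfying $\mu_i(\{Q>0\})=\mu_i(\{Q<0\})$ for each $i$. I would set $J:=\lceil n\log_2 d\rceil$ and construct polynomials $Q_1,\ldots,Q_J$ recursively, beginning with $\R^n$ viewed as the unique ``sign region'' of mass $\|F\|_1$. Given $Q_1,\ldots,Q_j$ inducing $2^j$ sign pattern regions each of mass $2^{-j}\|F\|_1$ with respect to $F\,dx$, apply the ham sandwich lemma to these $2^j$ restricted measures to obtain $Q_{j+1}$ of degree $\lesssim_n 2^{j/n}$ simultaneously bisecting each of them.

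Set $P:=Q_1\cdots Q_J$. The total degree $\sum_{j=0}^{J-1}\deg Q_{j+1}\lesssim_n\sum_{j=0}^{J-1} 2^{j/n}$ is a geometric series with ratio $2^{1/n}>1$, hence dominated by its final term, giving $\deg P\lesssim_n 2^{J/n}\sim d$; by rescaling $d\mapsto c_n d$ at the outset one arranges $\deg P\le d$ exactly. Each cell $O\in\cell(P)$ lies inside a unique sign pattern region of $(Q_1,\ldots,Q_J)$ of mass $2^{-J}\|F\|_1$, so $\int_O F\le 2^{-J}\|F\|_1\lesssim\|F\|_1/d^n$. Since each of the $2^J\sim d^n$ non-empty sign regions contains at least one cell, $\#\cell(P)\gtrsim d^n$; the matching upper bound is the Oleinik--Petrovsky--Milnor--Thom estimate $\#\cell(P)\lesssim(\deg P)^n\sim d^n$. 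This proves (i), and combined with $\sum_O\int_O F\le\|F\|_1$ and $\#\cell(P)\sim d^n$, the pointwise upper bound on cell mass forces $\int_O F\sim\|F\|_1/d^n$ on a constant-fraction refinement of cells, which I take to be the intended reading of (ii).

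The only substantive obstacle is degree accounting: one needs the ham sandwich exponent $N^{1/n}$ to be sharp in dimension $n$, so that the geometric series summing the cost of each iteration is dominated by its final term rather than acquiring a $\log d$ overhead. A secondary subtlety is that the construction equidistributes mass across sign pattern regions, which may each decompose into several connected components, so ``all equal'' in (ii) is interpreted up to constants; this is harmless in the applications of Theorem~\ref{simple partitioning theorem} since any partition with $\sim d^n$ cells of mass $\lesssim\|F\|_1/d^n$ suffices.
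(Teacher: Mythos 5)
Your proposal is correct and follows essentially the same route as the source the paper cites for this result (Guth, via Guth--Katz): iterated polynomial ham sandwich bisections whose degrees form a geometric series dominated by the last term, with the cell count pinned between the number of sign regions and the Oleinik--Petrovsky--Milnor--Thom bound. Your caveat that literal equality in (ii) holds only for sign-pattern regions rather than connected components, and must be read as comparability on a refinement of $\cell(P)$, is exactly the relaxation the paper itself makes immediately after stating the theorem.
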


This theorem is based on an earlier discrete partitioning result which played a central role in the resolution of the Erd\H{o}s distance conjecture~\cite{GK2015}. The proof is essentially topological, involving the polynomial ham sandwich theorem of Stone--Tukey~\cite{Stone1942}, which is itself a consequence of the Borsuk--Ulam theorem.

Under the hypotheses of Theorem~\ref{simple partitioning theorem}, it trivially follows that
\begin{equation}\label{partitioning 1}
    \int_{\R^n} F = \sum_{O \in \cell(P)} \int_{O} F = \#  \cell(P)  \int_{O_*} F \qquad \textrm{for any  $O_* \in \cell(P)$}.
\end{equation}
In view of the forthcoming applications of the polynomial partitioning theorem, precise equality is not required in \eqref{partitioning 1}, but merely comparability. By relaxing the inequality, one may, for instance, ensure that $Z(P)$ is given by a finite union of transverse complete intersections: see Theorem 5.5 of~\cite{Guth}. Furthermore, often one may freely pass to some refinement of the collection of cells which satisfy additional properties. This observation naturally lends itself to pigeonholing arguments, and two examples along these lines are discussed presently.

\subsubsection*{Passing to shrunken cells} It will be necessary to work with a `blurred out' version of the variety $Z(P)$ given by the $r^{1/2+\delta_{\;\!\!\circ}}$-neighbourhood $N_{r^{1/2 + \delta_{\;\!\!\circ}}}Z(P)$ for different choices of $r>0$ and small parameter $\delta_{\;\!\!\circ}>0$. The set $N_{r^{1/2+\delta_{\;\!\!\circ}}}Z(P)$ is referred to as the \emph{wall}. A simple pigeonholing argument shows that at least one of two cases hold:\\

\paragraph{\underline{Cellular case}} One may pass to a refinement of $\cell(P)$ such that if $\O$ denotes the collection of \emph{$r^{1/2 + \delta_{\;\!\!\circ}}$-shrunken cells} 
\begin{equation}\label{shrunken cells}
    \O := \Big\{\, O' \setminus N_{r^{1/2+\delta_{\;\!\!\circ}}}Z(P)\, :\, O' \in \cell(P)\, \Big\},
\end{equation}
then the mass of $F$ is essentially evenly distributed across these shrunken cells:
    \begin{equation*}
        \int_{O} F \sim d^{-n} \int_{\R^n} F \qquad \textrm{for all $O \in \O$}.
    \end{equation*}
    
\paragraph{\underline{Algebraic case}} The contribution to the integral from the wall dominates:
    \begin{equation*}
        \int_{\R^n} F \lesssim \int_{N_{\!r^{1/2+\delta_{\;\!\!\circ}}}Z(P)} F.
    \end{equation*}

\subsubsection*{Controlling the size of the cells} A simple but useful observation, appearing in~\cite{Wang}, is that one may also apply a pigeonholing argument to yield some natural control on the size of the cells. Here the analysis is localised to a fixed $r$-ball $B_{r}$ and, in particular, it is assumed that $\mathrm{supp}\,F \subset B_{r}$. In this situation one may, after passing to various refinements and relaxing the equalities in \eqref{partitioning 1}, assume that each $O \in \cell(P)$ has diameter at most $r/d$. In the present article, this reduction is made more for convenience rather than out of necessity and only a bound of $r/2$ is needed on the diameter of the cells; the precise details of this argument are therefore omitted (see~\cite{Wang} for further information).

\subsection{Partitioning over lower dimensional sets}  Theorem~\ref{simple partitioning theorem} alone is insufficient for the purposes of this article and a more involved partitioning result, which is implicit in~\cite{Guth}, will be used.

\begin{theorem}[Guth~\cite{Guth}]\label{partitioning theorem} Fix $r\gg 1$, $d \in \N$ and suppose $F \in L^1(\R^n)$ is non-negative and supported on $B_{r} \cap N_{r^{1/2 +\delta_{\;\!\!\circ}}}\bZ$ for some $0 < \delta_{\;\!\!\circ} \ll 1$, where  $\bZ$ is an $m$-dimensional transverse complete intersection of degree at most $d$. At least one of the following cases holds:\\

\paragraph{\underline{Cellular case}} There exists a polynomial $P \colon \R^n \to \R$ of degree $O(d)$ with the following properties:
\begin{enumerate}[i)]
    \item $\#\cell(P) \sim d^m$ and each $O \in \cell(P)$ has diameter at most $r/2$.
    \item One may pass to a refinement of $\cell(P)$ such that if $\O$ is defined as in \eqref{shrunken cells}, then 
    \begin{equation*}
        \int_{O} F \sim d^{-m}\int_{\R^n} F \qquad \textrm{for all $O \in \O$.}
    \end{equation*}
\end{enumerate}  
\paragraph{\underline{Algebraic case}} There exists an $(m-1)$-dimensional  transverse complete intersection $\bY$ of  degree at most $O(d)$ such that
    \begin{equation*}
        \int_{B_{r} \cap N_{\!r^{1/2+\delta_{\;\!\!\circ}}}\bZ} F \lesssim \int_{B_{r} \cap N_{\!r^{1/2 + \delta_{\;\!\!\circ}}}\bY} F.
    \end{equation*}
\end{theorem}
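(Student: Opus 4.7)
The plan is to adapt Guth's polynomial partitioning on varieties, which in turn is built from the Stone--Tukey ham sandwich theorem applied not to $\R^n$ but to the variety $\bZ$ itself. First I would pass from $F$ (which lives in the thin neighbourhood $N_{r^{1/2+\delta_{\;\!\!\circ}}}\bZ$) to an auxiliary measure $\tilde F$ living on $\bZ$ by integrating $F$ along $m$-dimensional slices transverse to $\bZ$. Since $\bZ$ is a smooth $m$-dimensional transverse complete intersection of degree at most $d$, the polynomial ham sandwich theorem yields a polynomial $Q$ of degree $O(d)$ whose zero set bisects $\tilde F$ on $\bZ$; iterating the bisection $m\log d$ times and invoking a Milnor--Thom type bound for the number of connected components of $\bZ\setminus Z(Q)$ produces $\sim d^m$ pieces, on each of which $\tilde F$ has comparable integral. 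Fattening these pieces back out to the $r^{1/2+\delta_{\;\!\!\circ}}$-neighbourhood gives the candidate cell decomposition of $N_{r^{1/2+\delta_{\;\!\!\circ}}}\bZ$.

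Next, I would execute the dichotomy. Let $W:=N_{r^{1/2+\delta_{\;\!\!\circ}}}(\bZ\cap Z(Q))$ be the thickened wall. Either at least half the mass of $F$ lies outside $W$ (in which case we are in the \emph{cellular case}) or more than half lies in $W$ (in which case we are in the \emph{algebraic case}). In the cellular case, I would pigeonhole to pass to a refinement of $\cell(Q)$ on which the masses $\int_O F$ are pairwise comparable, discarding at most a constant fraction of cells so that the count remains $\sim d^m$ and each surviving cell carries $\sim d^{-m}\int F$ as required.

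To get diameter control, I would follow Wang's observation: further cover $B_r$ by a lattice of $(r/C)$-balls for a large admissible $C$ and, by an additional pigeonholing, force each selected cell to lie inside one of these small balls (any cell which spans more than a single lattice ball contributes to the mass of $F$ across a bounded set of lattice balls, and a pigeonhole chooses one giving a refinement with diameter $\le r/2$). In the algebraic case, the mass sits on $W$, so one essentially has $\int F \lesssim \int_{W} F$; here I would take $\bY$ to be the variety $\bZ\cap Z(Q)$, of dimension $m-1$ and degree $O(d)$.

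The main obstacle is that $\bZ\cap Z(Q)$ is not automatically a \emph{transverse} complete intersection: the gradient of $Q$ may fail to be transverse to $T_z\bZ$ on a lower-dimensional subvariety of singular points. To overcome this I would use a generic perturbation argument, replacing $Q$ by $Q+\eta$ for suitable small polynomial perturbation so that Sard's theorem (or the Bertini-type genericity statement already used in~\cite{Guth}) makes the intersection transverse everywhere; the perturbation does not affect the degree bound or, up to negligible error, the mass allocation, so the required $(m-1)$-dimensional transverse complete intersection $\bY$ of degree at most $O(d)$ is obtained. Handling this transversality bookkeeping cleanly, in a way compatible with the recursive algorithm of Section~\ref{structure lemma section}, is the delicate point, but it is exactly what is done in Section~5 of~\cite{Guth}.
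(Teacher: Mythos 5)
Your overall architecture (ham--sandwich bisection adapted to the $m$-dimensional situation, a cellular/algebraic dichotomy, Wang's pigeonholing for the diameter bound, and a perturbation to restore transversality) is the right one; note that the paper itself does not reprove this result but defers to Section 8.1 of \cite{Guth} together with the pigeonholing in Section 5.1. The genuine gap is in how you set up the dichotomy. You declare the wall to be $N_{r^{1/2+\delta_{\;\!\!\circ}}}(\bZ\cap Z(Q))$, but the shrunken cells in the conclusion are, by \eqref{shrunken cells}, the sets $O'\setminus N_{r^{1/2+\delta_{\;\!\!\circ}}}Z(P)$: one must delete a neighbourhood of the \emph{entire hypersurface} $Z(P)$, since that is what guarantees the key property \eqref{shrunken property} (a tube entering a shrunken cell has its core line in the parent cell) and hence the B\'ezout count used later in the recursion. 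The correct dichotomy is therefore between the shrunken cells and the wall $N_{r^{1/2+\delta_{\;\!\!\circ}}}Z(P)\cap N_{r^{1/2+\delta_{\;\!\!\circ}}}\bZ$, and the whole content of the algebraic case is to show that this \emph{larger} set is essentially covered by neighbourhoods of $(m-1)$-dimensional transverse complete intersections. It is not comparable to $N_{r^{1/2+\delta_{\;\!\!\circ}}}(\bZ\cap Z(P))$: a sheet of $Z(P)$ can run nearly parallel to $\bZ$ at distance comparable to $r^{1/2+\delta_{\;\!\!\circ}}$ over a large region, so the true wall can carry most of the mass of $F$ while $\bZ\cap Z(P)$ lies far away or is even empty. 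With your smaller wall, ``half the mass lies outside $W$'' does not yield the cellular conclusion, and ``half the mass lies inside $W$'' is not the case one actually has to treat. The missing ingredient is Guth's level-set argument: for $x$ in the true wall with nearest point $z\in\bZ$ one has $|P(z)|\lesssim r^{1/2+\delta_{\;\!\!\circ}}\sup|\nabla P|$, so the wall is covered by neighbourhoods of the varieties $Z(P_1,\dots,P_{n-m},P-\lambda)$ as $\lambda$ ranges over a finite set; one then pigeonholes in $\lambda$, with Sard's theorem guaranteeing that almost every such level is a transverse complete intersection. Your Bertini-type perturbation addresses only the transversality of $\bZ\cap Z(P)$ itself, not this covering step.

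A secondary issue: your cells are fattenings of components of $\bZ\setminus Z(Q)$, whereas the theorem requires components of $\R^n\setminus Z(Q)$ --- the application needs $\#\cell(P)\sim d^m$ for the Euclidean cells precisely so that a core line meets only $\deg P+1=O(d)$ of them; fattened pieces of $\bZ$ do not come with this property, and the mass balance for the pushed-forward measure $\tilde F$ does not transfer automatically to the shrunken Euclidean cells. It is cleaner to bisect $F$ itself in $\R^n$, stopping after $\sim m\log_2 d$ steps, so that the partitioning polynomial has degree $O(d^{m/n})=O(d)$ and there are $\sim d^m$ Euclidean cells of exactly equal mass; the $m$-dimensionality of $\bZ$ is then exploited only in the algebraic case, via the covering argument described above.
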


The choice of scales $r$ and $r^{1/2 + \delta_{\;\!\!\circ}}$ is not particularly special in the sense that the theorem holds true in greater generality: the result is presented in this specific case only in anticipation of later applications.

The statement of this theorem does not explicitly appear in~\cite{Guth}, but it can be easily deduced from the argument described in Section 8.1 of that article together with the simple pigeonholing arguments discussed earlier in this subsection. The key difference between Theorem~\ref{partitioning theorem} and Theorem~\ref{simple partitioning theorem} is that in the latter one has the additional hypothesis that $F$ is supported in a $r^{1/2 + \delta_{\;\!\!\circ}}$-neighbourhood of the \emph{$m$-dimensional} variety $\bZ$. This allows one to construct a partitioning polynomial which cuts out only $O(d^m)$ cells rather than the $O(d^n)$ guaranteed by Theorem~\ref{simple partitioning theorem}. 

Theorem~\ref{partitioning theorem} is then applied to the relevant broad norm by taking 
\begin{equation*}
 F = \sum_{B_{\>\!\!K^2} \in \cB_{\>\!\!K^2}}  \mu_{Ef}(B_{\>\!\!K^2})\frac{1}{|B_{\>\!\!K^2}|} \mathbf{1}_{B_{\>\!\!K^2} \cap B_{r} \cap N_{\!r^{1/2+\delta_{\;\!\!\circ}}}\bZ}
\end{equation*}
for some $0 < \delta_{\;\!\!\circ} \ll 1$. 
 \begin{itemize} 
 \item If the cellular case holds, then it follows that
 \begin{equation*}
     \|Ef\|_{\BL{k,A}^p(B_{r} \cap N_{r^{1/2+\delta_{\;\!\!\circ}}}\bZ)}^p \lesssim d^{m}\|Ef\|^p_{\BL{k,A}^p(O)} \qquad \textrm{for all $O \in \O$}
 \end{equation*}
 where $\O$ is the collection of cells produced by the theorem.
 \item If the algebraic case holds, then it follows that
 \begin{equation*}
     \|Ef\|_{\BL{k,A}^p(B_{r} \cap N_{r^{1/2+\delta_{\;\!\!\circ}}}\bZ)}^p \lesssim 
     \|Ef\|_{\BL{k,A}^p(B_{r} \cap N_{\!r^{1/2+\delta_{\;\!\!\circ}}}\bY)}^p
\end{equation*}
where $\bY$ is the variety produced by the theorem.
\end{itemize}




\section{Wave packet decompositions}\label{wave packet decomposition section}




\subsection{Definition and basic properties} Let $r \gg 1$ and cover the domain $B^{n-1}$ by a family $\Theta_r$ of finitely-overlapping balls of radius $r^{-1/2}$. As noted in Section~\ref{Wolff axioms section}, these $(n-1)$-dimensional balls are referred to as \emph{$r^{-1/2}$-caps} and $\xi_{\theta}$ is used to denote the centre of $\theta$. Fix $(\psi_{\theta})_{\theta \in \Theta_r}$ a smooth partition of unity for $B^{n-1}$, subordinate to the cover $\Theta_r$, such that each function $\xi \mapsto \psi_{\theta}(\xi_{\theta} + r^{-1/2}\xi)$ is supported in $[-\pi,\pi]^{n-1}$ and
\begin{equation*}
    \|\partial_x^{\alpha} \psi_{\theta}\|_{L^{\infty}(\R^{n-1})} \lesssim_{\alpha} r^{|\alpha|/2} \qquad \textrm{for all $\alpha \in \N_0^{n-1}$.}
\end{equation*}

Given our smooth, bounded input function $f \colon B^{n-1} \to \C$, by performing a Fourier series decomposition, we have 
\begin{equation*}
    f\cdot \psi_{\theta}(\xi) = \Big(\frac{r^{1/2}}{2\pi}\Big)^{n-1}\sum_{v \in r^{1/2}\Z^{n-1}} e^{i \langle v, \xi \rangle } (f\cdot \psi_{\theta})^\wedge(v).
\end{equation*}
 Writing $\T[r] := \Theta_{r} \times r^{1/2}\Z^{n-1}$, this yields
\begin{equation}\label{wave packet decomposition}
f = \sum_{(\theta,v) \in \T[r]} f_{\theta,v}
\end{equation}
where 
\begin{equation*}
f_{\theta,v}(\xi) := \Big(\frac{r^{1/2}}{2\pi}\Big)^{n-1}e^{i \langle v, \xi \rangle } (f\cdot \psi_{\theta})^\wedge(v)\tilde{\psi}_{\theta}(\xi)
\end{equation*}
for $\tilde{\psi}_{\theta}$ a bump function which is also adapted to $\theta \in \Theta_{r}$ but which is equal to 1 on the support of $\psi_{\theta}$.  
The sum \eqref{wave packet decomposition} is referred to as the \emph{wave packet decomposition of~$f$ at scale $r$}. The functions $f_{\theta,v}$ and the pairs $(\theta,v) \in \T[r]$ will both be referred to as \emph{(scale $r$) wave packets}. 

The key properties of this decomposition are as follows:\\




\paragraph{\textbf{Orthogonality between the wave packets}} Recall that the $\psi_{\theta}$ have almost disjoint supports. Combining this observation with the Plancherel identity for Fourier series, one concludes that
\begin{equation*}
    \Big\|\sum_{(\theta,v) \in \mathbb{W}} f_{\theta,v}\Big\|_{2}^2 \sim \sum_{(\theta,v) \in \mathbb{W}} \|f_{\theta,v}\|_{2}^2
\end{equation*}
for any collection of wave packets $\mathbb{W} \subseteq \T[r]$. 

It is worth noting that there is a local version of this orthogonality relation. In particular, for $1 \leq \rho \leq r$ and  a $\rho^{-1/2}$-cap $\theta_*$, one may readily verify that
\begin{equation*}
    \Big\|\sum_{(\theta,v) \in \mathbb{W}} f_{\theta,v}\Big\|_{L^2(\theta_*)}^2 \lesssim \sum_{(\theta,v) \in \mathbb{W}} \|f_{\theta,v}\|_{L^2(3\theta_*)}^2
\end{equation*}
where the right-hand norm is over the cap $3\theta_*$ concentric to $\theta_*$ but with thrice the radius. A reverse form of this inequality also holds (with $\theta_*$ on the left and $3\theta_*$ on the right-hand side), and together they imply the more symmetric estimate
\begin{equation*}
    \max_{\theta_* : \rho^{-1/2}- \mathrm{cap}}\Big\|\sum_{(\theta,v) \in \mathbb{W}} f_{\theta,v}\Big\|_{L^2(\theta_*)}^2 \sim \max_{\theta_* : \rho^{-1/2}- \mathrm{cap}} \sum_{(\theta,v) \in \mathbb{W}} \|f_{\theta,v}\|_{L^2(\theta_*)}^2,
\end{equation*}
where the maximum is over all $\rho^{-1/2}$-caps. \\



\paragraph{\textbf{Spatial concentration}} Given any wave packet $(\theta,v) \in \T[r]$, on the ball $B(0,r)$ the function $Ef_{\theta,v}$ is essentially supported on the tube 
\begin{equation*}
\Big\{\, x \in B(0,r)\, :\, |x'+2x_n\xi_{\theta} + v| \leq r^{1/2}\, \Big\}
\end{equation*}
in the sense that $|Ef_{\theta,v}(x)|$ decays rapidly as $x \in B(0,r)$ moves away from this set. More precisely, a simple stationary phase analysis shows that
\begin{equation*}
    |Ef_{\theta,v}(x)| \lesssim_N r^{-\frac{n-1}{4}} (1 + r^{-1/2}|x'+2x_n\xi_{\theta} + v|)^{-N} \|f_{\theta,v}\|_{2}
\end{equation*}
for all $N \in \N$ and $x\in\R^n$ with $|x_n| < r$; see, for example, \cite[Lemma 4.1]{Tao2003}. In particular, given $0 < \delta \ll 1$, the function $|Ef_{\theta,v}|$ is very small away from the slightly fattened tube
\begin{equation*}
  T_{\theta,v} :=  \Big\{\, x \in B(0,r)\, :\, |x'+2x_n\xi_{\theta} + v| \leq r^{1/2+\delta}\, \Big\},
\end{equation*}
satisfying
\begin{equation}\label{concentration estimate}
    |Ef_{\theta,v}(x)\mathbf{1}_{B(0,r) \setminus T_{\theta,v}}(x)| \lesssim_{\delta,N} r^{-N} \|f_{\theta,v}\|_{2}
\end{equation}
for all $N \in \N$ and $x \in \R^n$ with $|x_n| < r$. Note that $T_{\theta,v}$ as defined above is a tube with direction $G(\xi_{\theta})$ (where $G$ is the Gauss map as defined in \eqref{Gauss map}) which passes through the point $(-v,0) \in \R^n$. 

Rapidly decaying terms of the kind seen in \eqref{concentration estimate} are a regular feature of the forthcoming analysis and it is convenient to introduce the notation $\mathrm{RapDec}(r)$ to denote a non-negative term which is rapidly decreasing in $r$: that is, 
\begin{equation*}
    \mathrm{RapDec}(r) \lesssim_{\delta,N} r^{-N} \qquad \textrm{for all $N \in \N$.}
\end{equation*}
Thus, with this definition, the estimate in \eqref{concentration estimate} can be succinctly written as
\begin{equation*}
    |Ef_{\theta,v}(x)\mathbf{1}_{B(0,r) \setminus T_{\theta,v}}(x)| = \mathrm{RapDec}(r)\|f_{\theta,v}\|_{2}
\end{equation*}
for all  $x \in \R^n$ with $|x_n| < r$.




\subsection{Comparing wave packet decompositions at different scales}\label{comparing wave packet decompositions section} 

For $r$ as above, consider a smaller scale $\rho$ satisfying $r^{1/2} \leq \rho \leq r$ and a ball $B(y,\rho)$ with centre $y \in B(0,r)$. We decompose $f$ into wave packets over the ball $B(y,\rho)$ at this smaller spatial scale. The first step is to apply a transformation to recentre~$B(y,\rho)$ at the origin. In particular, write $Ef(x) = E\tilde{f}(\tilde{x})$ where $x = y + \tilde{x}$ for some $\tilde{x} \in B(0,\rho)$ and
\begin{equation*}\label{tilde}
\tilde{f}(\xi) := e^{i(\langle y' \!,\, \xi \rangle + y_n|\xi|^2)}f(\xi). 
\end{equation*}
The function $\tilde{f}$ is now decomposed into scale $\rho$ wave packets;
\begin{equation}\label{second}
    \tilde{f} = \sum_{(\tilde{\theta}, \tilde{v}) \in \T[\rho]} \tilde{f}_{\tilde{\theta},\tilde{v}}.
\end{equation}

A basic question, studied in detail in \cite[Section 7]{Guth}, is to understand how the two wave packet decompositions \eqref{wave packet decomposition}  and \eqref{second} relate to one another. For instance, suppose the significant contributions to $f$ come from a subcollection $\mathbb{W}$ of the scale~$r$ wave packets; which scale $\rho$ wave packets contribute significantly to $f$? 
To make this question precise, we introduce the following definition.

\begin{definition}\label{conc} The function $f \colon B^{n-1} \to \C$ is said to be concentrated on wave packets from $\mathbb{W}$ if 
\begin{equation*}
    \big\|\sum_{(\theta,v)\notin \mathbb{W}} f_{\theta,v}\big\|_{\infty} = \mathrm{RapDec}(r)\|f\|_2.
\end{equation*}
\end{definition}

With this definition, the following lemma provides a relationship between wave packet concentration properties at distinct scales.

\begin{lemma}[\cite{Guth}]\label{relation between scales lemma} If $f$ is concentrated on scale $r$ wave packets $\mathbb{W} \subseteq \T[r]$, then $\tilde{f}$ is concentrated on a set of wave packets $\widetilde{\mathbb{W}} \subseteq \T[\rho]$ with the following property: for every $(\tilde{\theta}, \tilde{v}) \in \widetilde{\mathbb{W}}$ there exists a wave packet $(\theta, v) \in \mathbb{W}$ such that
\begin{enumerate}[i)]
\item $ \mathrm{dist}_H\big(T_{\tilde{\theta}, \tilde{v}} + y, T_{\theta,v} \cap B(y, \rho)\big) \lesssim r^{1/2 + \delta}$;
\item $  \angle(G(\xi_{\theta}),G(\xi_{\tilde{\theta}})) \lesssim \rho^{-1/2}$.
\end{enumerate}
Here $\mathrm{dist}_H$ denotes the Hausdorff distance. 
\end{lemma}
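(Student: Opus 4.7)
The plan is to track the scale-$r$ wave packet decomposition of $f$ through the unimodular modulation $M(\xi) := e^{i(\langle y', \xi\rangle + y_n |\xi|^2)}$ (so that $\tilde f = M f$) and then pass to the scale-$\rho$ decomposition. First I would write $f = f_{\mathbb{W}} + \mathcal{E}$ with $f_{\mathbb{W}} := \sum_{(\theta,v) \in \mathbb{W}} f_{\theta,v}$ and $\|\mathcal{E}\|_\infty = \mathrm{RapDec}(r)\|f\|_2$. Since $|M| \equiv 1$ and $\|\tilde f\|_2 = \|f\|_2$, this rapid-decay bound transfers pointwise to $M\mathcal{E}$, and a routine estimate shows that the scale-$\rho$ wave packet expansion of $M\mathcal{E}$ contributes at most $\mathrm{RapDec}(r)\|f\|_2$ in $L^\infty$. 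It therefore suffices to analyse $M f_{\mathbb{W}}$ packet by packet.

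For a fixed $(\theta,v) \in \mathbb{W}$, the scale-$\rho$ Fourier coefficient of $M f_{\theta,v}$ against $(\tilde\theta, \tilde v)$ is, after substituting the explicit formula for $f_{\theta,v}$, a constant multiple of $(f \psi_\theta)^{\wedge}(v)$ times
\[
I(\theta, v, \tilde\theta, \tilde v) \;:=\; \int_{\R^{n-1}} e^{i \phi(\xi)}\, \tilde \psi_\theta(\xi)\, \psi_{\tilde\theta}(\xi)\, \ud \xi, \qquad \phi(\xi) := \langle v + y' - \tilde v, \xi \rangle + y_n |\xi|^2.
\]
The amplitude is supported in the intersection of a ball of radius $\sim r^{-1/2}$ around $\xi_\theta$ and one of radius $\sim \rho^{-1/2}$ around $\xi_{\tilde\theta}$, which is empty unless $|\xi_\theta - \xi_{\tilde\theta}| \lesssim \rho^{-1/2}$; since $G$ is Lipschitz on $B^{n-1}$, this already yields (ii). Writing $\xi = \xi_\theta + \eta$ with $|\eta| \lesssim r^{-1/2}$, one has $\nabla\phi(\xi) = (v + y' + 2 y_n \xi_\theta - \tilde v) + 2 y_n \eta$, whose last term is $O(r^{1/2})$. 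Hence if $|\tilde v - v - y' - 2 y_n \xi_\theta| \gtrsim r^{1/2 + \delta}$, then $|\nabla \phi| \gtrsim r^{1/2+\delta}$ uniformly on the support and a standard non-stationary phase integration by parts forces $I = \mathrm{RapDec}(r)$. This identifies the significant $\tilde v$.

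I would then define $\widetilde{\mathbb{W}}$ as the union, over $(\theta,v) \in \mathbb{W}$, of the scale-$\rho$ packets $(\tilde\theta, \tilde v) \in \T[\rho]$ satisfying the two selection conditions $|\xi_\theta - \xi_{\tilde\theta}| \lesssim \rho^{-1/2}$ and $|\tilde v - v - y' - 2y_n \xi_\theta| \lesssim r^{1/2+\delta}$. Condition (ii) is immediate; for (i), a direct computation of the tube cores shows that at height $x_n$ the offset between the cores of $T_{\theta,v}$ and $T_{\tilde\theta, \tilde v} + y$ equals
\[
(\tilde v - v - y' - 2 y_n \xi_\theta) + 2(x_n - y_n)(\xi_{\tilde\theta} - \xi_\theta),
\]
which is bounded by $r^{1/2+\delta} + \rho \cdot \rho^{-1/2} = O(r^{1/2+\delta})$ on $|x_n - y_n| \leq \rho$. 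Together with the $r^{1/2+\delta}$-thickness of $T_{\theta,v}$, this yields the Hausdorff bound in (i).

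The main technical obstacle is the non-stationary phase analysis in the presence of the quadratic term $y_n |\xi|^2$ whose coefficient $|y_n|$ may be as large as $r$, since each integration by parts a priori costs a factor of $|\nabla^2 \phi| = 2|y_n|$ when differentiating $1/|\nabla\phi|^2$. The resolution is that under the hypothesis $|\nabla \phi| \gtrsim r^{1/2+\delta}$ one has $|\nabla^2 \phi| / |\nabla \phi|^2 \lesssim r^{-2\delta}$ and each derivative of the amplitude costs only $r^{1/2} / |\nabla \phi| \lesssim r^{-\delta}$, so iterating the integration by parts yields the required polynomial decay of arbitrary order in $r$.
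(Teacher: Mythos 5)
Your argument is correct and is essentially the proof indicated by the paper, which states this lemma without proof and refers to \cite[Section 7]{Guth}: one computes the scale-$\rho$ coefficients of each modulated scale-$r$ packet, reads off the angle condition (ii) from the support of the amplitude, the position condition from non-stationary phase, and condition (i) from the core-line offset computation. The details left implicit --- summing the rapidly decaying coefficients over the polynomially many significant $(\theta,v)$ and over $\tilde v$ (using, e.g., $|(f\psi_{\theta})^{\wedge}(v)| \lesssim r^{-(n-1)/4}\|f\|_2$ to convert to the $\|f\|_2$ normalisation), and the treatment of the error term $\mathcal{E}$ --- are routine, and you have correctly isolated the one genuine subtlety, namely that the large Hessian $|2y_n| \leq 2r$ is harmless because each integration by parts still gains a factor $r^{-\delta}$.
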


\begin{figure}
\begin{center}

\resizebox {0.7\textwidth} {!} {
\begin{tikzpicture}

 {
 \filldraw[yellow!50,cm={cos(3) ,-sin(3) ,sin(3) ,cos(3) ,(0 cm, 0 cm)}](0.6,5) -- (0.6,-4.5) -- (-0.6,-4.5) -- (-0.6,4.5) -- (0.6,4.5);
  \draw[yellow,thick, cm={cos(3) ,-sin(3) ,sin(3) ,cos(3) ,(0 cm, 0 cm)}](0.6,5) -- (0.6,-4.5) -- (-0.6,-4.5) -- (-0.6,4.5) -- (0.6,4.5);
  \draw[yellow, thick, dashed, cm={cos(3) ,-sin(3) ,sin(3) ,cos(3) ,(0 cm, 0 cm)}](1,4.5) -- (1,-4.5) -- (-1,-4.5) -- (-1,4.5) -- (1,4.5);
}

 \filldraw[blue!20,cm={cos(0-2) ,-sin(0-2) ,sin(0-2) ,cos(0-2) ,(-0.5 cm, 0.3 cm)}](0.075,5) -- (0.075,-5) -- (-0.075,-5) -- (-0.075,5) -- (0.075,5);

 \draw[blue,cm={cos(0-2) ,-sin(0-2) ,sin(0-2) ,cos(0-2) ,(-0.5 cm, 0.3 cm)}] (0.075,5) -- (0.075,-5);
 \draw[blue,cm={cos(0-2) ,-sin(0-2) ,sin(0-2) ,cos(0-2) ,(-0.5 cm, 0.3 cm)}] (-0.075,-5) -- (-0.075,5);

\fill[fill=white] (4cm,0) arc [white,radius=4cm, start angle=0, delta angle=180]
                  -- (-5.4cm,0) arc [white, radius=5.4cm, start angle=180, delta angle=-180]
                  -- cycle;
\fill[fill=white] (4cm,0) arc [white,radius=4cm, start angle=0, delta angle=-180]
                  -- (-5cm,0) arc [white, radius=5cm, start angle=180, delta angle=180]
                  -- cycle;

\draw[blue,thick,dashed] (0,0) circle (4cm);

 \node[scale=1.5] at  (-1.9cm,1.5) {$ T_{\tilde{\theta},\tilde{v}}+y$};
  \node[scale=1.5] at  (1.5cm,-0.5cm) {$ T_{\theta,v} \cap B(y, \rho)$};
\node[right, scale=1.5] at  (3.2cm,-3.2cm) {$B(y, \rho)$};

{
    \draw[black,line width=0.5mm, ->, cm={cos(3) ,-sin(3) ,sin(3) ,cos(3) ,(0 cm, 0 cm)}](0,1.45) -- (0,4.85) node [above, right, scale = 1.3] {$G(\xi_{\theta})$}; 
}

{
\draw[black,line width=0.5mm, ->, cm={cos(0-2) ,-sin(0-2) ,sin(0-2) ,cos(0-2) ,(-0.5 cm, 0.3 cm)}](0,1.2) -- (0,4.55) node [above, left, scale = 1.3] {$G(\xi_{\tilde{\theta}})$};
}

		\end{tikzpicture}}
		
\caption{For every $(\tilde{\theta}, \tilde{v}) \in \widetilde{\mathbb{W}}$ there exists a `parent' wave packet $(\theta,v) \in \mathbb{W}$ such that: i) $T_{\tilde{\theta}, \tilde{v}}+y$ (denoted here in \colorbox{blue!20}{blue}) is contained a fixed dilate of $T_{\theta,v} \cap B(y,\rho)$ (denoted here in \colorbox{yellow!50}{yellow}) and ii) the angle between the directions is $O(\rho^{-1/2})$.
}
\label{tube diagram}
\end{center}
\end{figure}
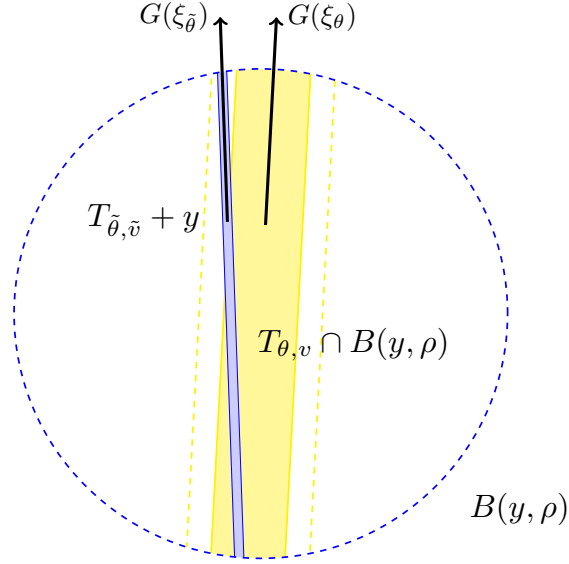

The lemma tells us that every small scale wave packet $(\tilde{\theta},\tilde{v}) \in \widetilde{\mathbb{W}}$ has a `parent' large scale wave packet $(\theta,v) \in \mathbb{W}$ such that $T_{\tilde{\theta},\tilde{v}}$ both lies close to $T_{\theta,v}$ and points in a similar direction to $T_{\theta,v}$. This behaviour is represented in Figure~\ref{tube diagram}. 

\section{Tangential wave packets}\label{tangential}

 We begin by giving the precise definition of what it means for a tube $T_{\theta,v}$ to be tangent to $\bZ$; throughout this section $\bZ\subset \R^n$ will denote an $m$-dimensional transverse complete intersection and $0 < \delta \ll \delta_m \ll 1$ are fixed small parameters, where~$\delta$ is as in the  previous section.

\begin{definition}\label{tangent definition} Letting $r \gg 1$ and $y \in B_{\>\!\!R}$,  a (translated) tube $T_{\theta,v} + y$ for $(\theta,v) \in \T[r]$ is said to be $r^{-1/2+\delta_m}$-tangent to $\bZ$ in $B(y,r)$ if:
\begin{enumerate}[i)]
\item $T_{\theta,v}+y  \subseteq N_{\!r^{1/2+\delta_m}}\bZ \cap B(y,r)$;
\item For any $x \in T_{\theta,v}+y$ and $z \in \bZ \cap B(y,r)$ with $|z-x| \lesssim r^{1/2 + \delta_m}$ one has
\begin{equation*}
    \angle(G(\theta), T_z\bZ) \lesssim r^{-1/2+\delta_m}. 
\end{equation*}
\end{enumerate}
\end{definition}

Throughout this section, we consider a function $g$  which is concentrated on tangential wave packets in the sense that
\begin{equation}\label{tangency hypothesis}
    g = \sum_{(\theta,v) \in \T_{\bZ}[r]} g_{\theta,v} + \mathrm{RapDec}(r)\|g\|_2
\end{equation}
where
\begin{equation*}
 \T_{\bZ}[r] := \Big\{\, (\theta,v) \in \T[r]\, :\, T_{\theta,v} \textrm{ is $r^{-1/2+\delta_m}$-tangent to $\bZ$ in $B(0,r)$}\, \Big\}.
\end{equation*}
An important ingredient in the proof of Theorem~\ref{main theorem} will be to understand what can be said about $Eg$ under this tangency hypothesis. Recall from the discussion in Section~\ref{Wolff axioms section} that there are three useful estimates at our disposal:
\begin{itemize}
     \item Vanishing property of the $k$-broad norms,
     \item Transverse equidistribution estimates,
     \item Bounds arising from the polynomial Wolff axioms.
\end{itemize}
The purpose of this section is to provide the precise details of all three of these estimates. The first two were observed and used by Guth~\cite{Guth} to prove restriction estimates in high dimensions. The polynomial Wolff axioms were also applied earlier by Guth~\cite{Guth2016} in the special case of 2-surfaces in $\R^3$ to study the restriction problem in 3-dimensions. 




\subsection{Vanishing property of the $k$-broad norms} The key advantage of working with $k$-broad norms rather than classical $L^p$ inequalities is that the former satisfy the following property. 

\begin{lemma}\label{key k-broad lemma} Let $r \gg 1$, let $1 \leq m < k \leq n$, and let $\bZ$ be $m$-dimensional. Suppose that $g$ is concentrated on wave packets from $\T_{\bZ}[r]$. Then
\begin{equation*}
    \|Eg\|_{\BL{k,A}^p(B_{r})} = \mathrm{RapDec}(r)\|g\|_2.
\end{equation*}
\end{lemma}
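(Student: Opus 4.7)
The plan is to show the stronger pointwise statement $\mu_{Eg}(B_{\>\!\!K^2}) = \mathrm{RapDec}(r)\|g\|_{2}^{p}$ for every $B_{\>\!\!K^2} \in \cB_{\>\!\!K^2}$, whence the lemma follows by summing over the polynomially many $B_{\>\!\!K^2}$ intersecting $B_{r}$ in the definition \eqref{k-broad norm definition}. The key will be to exhibit, for each $B_{\>\!\!K^2}$, a single $(k-1)$-dimensional subspace $V$ which obstructs the only caps $\tau$ capable of contributing to $\mu_{Eg}(B_{\>\!\!K^2})$. Reducing to this pointwise form is routine since $\cB_{\>\!\!K^2}$ has polynomially many elements and the orthogonality identity \eqref{introduction: orthogonality} combined with Cauchy--Schwarz gives $\sum_{(\theta,v)\in\T_{\bZ}[r]} \|g_{\theta,v}\|_{2} \lesssim r^{O(1)}\|g\|_{2}$, so rapid decay survives the summation.

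Fix $B_{\>\!\!K^2}$ with centre $y$; I would choose the partition $(\psi_\tau)_\tau$ compatible with the $r^{-1/2}$-cap structure (possible since $K^{-1} \gg r^{-1/2}$), so that $g_\tau \approx \sum_{(\theta,v) \in \T_{\bZ}[r],\,\theta \subseteq \tau} g_{\theta,v}$ modulo a $\mathrm{RapDec}(r)\|g\|_{2}$ error coming from \eqref{tangency hypothesis}. If $B_{\>\!\!K^2}$ lies outside $N_{\!r^{1/2+\delta_m}}\bZ$, then tangency condition (i) of Definition~\ref{tangent definition} forces every tube $T_{\theta,v}$ with $(\theta,v) \in \T_{\bZ}[r]$ to miss $B_{\>\!\!K^2}$, so the spatial concentration estimate \eqref{concentration estimate} yields $\|Eg_\tau\|_{L^p(B_{\>\!\!K^2})} = \mathrm{RapDec}(r)\|g\|_{2}$ for every $\tau$ independent of the choice of $V$. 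Otherwise, fix some $z_0 \in \bZ$ with $|z_0 - y| \lesssim r^{1/2+\delta_m}$; since $\dim T_{z_0}\bZ = m \leq k-1$ by the hypothesis $m < k$, one may choose a $(k-1)$-dimensional subspace $V \supseteq T_{z_0}\bZ$ to plug into \eqref{mu definition}.

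I claim that with this $V$, every cap $\tau$ satisfying $\angle(G(\tau), V) > K^{-1}$ has $T_{\theta,v} \cap B_{\>\!\!K^2} = \emptyset$ for all $(\theta,v) \in \T_{\bZ}[r]$ with $\theta \subseteq \tau$. Indeed, if some $x \in T_{\theta,v} \cap B_{\>\!\!K^2}$ existed, then $|z_0 - x| \leq |z_0 - y| + K^2 \lesssim r^{1/2+\delta_m}$, so condition (ii) of Definition~\ref{tangent definition} would give $\angle(G(\theta), T_{z_0}\bZ) \lesssim r^{-1/2+\delta_m}$; using $T_{z_0}\bZ \subseteq V$ together with the monotonicity $\angle(G(\tau),V) \leq \angle(G(\theta),V)$ arising from $G(\theta) \subseteq G(\tau)$, this would force $\angle(G(\tau), V) \lesssim r^{-1/2+\delta_m} \ll K^{-1}$ for $r$ large, contradicting the hypothesis. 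Hence \eqref{concentration estimate} again gives $\|Eg_\tau\|_{L^p(B_{\>\!\!K^2})} = \mathrm{RapDec}(r)\|g\|_{2}$, and choosing $V_1 = \cdots = V_A = V$ in \eqref{mu definition} yields the claim. The main obstacle I anticipate is purely notational: reconciling the two partition scales $r^{-1/2}$ and $K^{-1}$ cleanly and confirming that the finite sum of wave-packet terms, each carrying a $\mathrm{RapDec}(r)$ factor, remains rapidly decaying in $L^p(B_{\>\!\!K^2})$. The essential geometric content is the strict dimensional inequality $m < k$, which is precisely what permits the inclusion $T_{z_0}\bZ \subseteq V \in \mathrm{Gr}(k-1,n)$; this is the mechanism by which $k$-broad norms annihilate configurations concentrated on varieties of dimension below $k$.
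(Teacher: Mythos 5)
Your proof is correct and is exactly the argument the paper has in mind (the lemma is stated without proof, with a pointer to the start of the proof of Proposition 8.1 in Guth's paper, which runs precisely as you do: choose $V\supseteq T_{z_0}\bZ$ for a point $z_0\in\bZ$ near the given $B_{\>\!\!K^2}$, which is possible since $m\le k-1$, and use tangency condition (ii) to conclude that every contributing cap $\tau$ satisfies $\angle(G(\tau),V)\lesssim r^{-1/2+\delta_m}<K^{-1}$ and is therefore excluded from the maximum). The only cosmetic point is that a scale-$r^{-1/2}$ cap $\theta$ need not be contained in a single $\tau$ but may merely meet $\mathrm{supp}\,\psi_\tau$; this costs an extra $O(K^{-1})+O(r^{-1/2})$ in the angle estimate, which is harmless.
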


The lemma follows fairly directly from the definition of the $k$-broad norms and the basic properties of the wave packet decomposition. The simple argument can be readily extracted from the beginning of the proof of Proposition 8.1 in~\cite{Guth}.




\subsection{Comparing tangency properties at different scales}\label{tangency at different scales section} The description of the transverse equidistribution estimates is a little involved and will require some preliminary definitions. In Section~\ref{comparing wave packet decompositions section} we compared wave packet concentration properties at different spatial scales; we now pursue this investigation further in the tangential scenario.

As above, suppose $g$ is concentrated on wave packets from $\T_{\bZ}[r]$. Once again, let $r^{1/2} \leq \rho \leq r$ be a choice of smaller spatial scale and consider some $\rho$-ball $B(y, \rho)$ with centre $y \in B(0,r)$. Lemma~\ref{relation between scales lemma} can be used to analyse the tangency properties of the scale $\rho$ wave packets defined over the ball $B(y,\rho)$. To see this, first write 
\begin{equation*}
    \tilde{g}(\xi) := e^{i(\langle y', \xi \rangle + y_n|\xi|^2)}g(\xi),
\end{equation*}
as in Section~\ref{comparing wave packet decompositions section}. By Lemma~\ref{relation between scales lemma}, the function $\tilde{g}$ is concentrated on scale $\rho$ wave packets which each admit a `parent' wave packet in $\T_{\bZ}[r]$. The scale $\rho$ wave packets therefore inherit tangency properties from their parents. It turns out that the angle condition inherited by the scale $\rho$ wave packets is very strong, but the containment property is too weak to ensure that the scale $\rho$ wave packets are tangent to $\bZ$ itself. However, as shown in \cite[Section 7]{Guth}, the function $\tilde{g}$ \emph{is} concentrated on scale $\rho$ wave packets~$T_{\tilde{\theta},\tilde{v}}$ which are tangent to various \emph{translates} of $\bZ$. A schematic of this behaviour is provided in Figures~\ref{tangent figure 1} and~\ref{tangent figure 2} below.

To make the preceding discussion more precise, given $b \in \R^n$ let $\T_b[\rho]$ be the subcollection of $\T[\rho]$ consisting of those wave packets which are $\rho^{-1/2+\delta_m}$-tangent to $\bZ  + b-y$ in $B(0,\rho)$. At least heuristically, there is a finite set of translates $\fB \subseteq B(0,r^{1/2+\delta_m})$ such that the  $\{\T_b[\rho] : b \in \fB\}$ are pairwise disjoint and
\begin{equation}\label{g decomposition}
    \tilde{g} = \sum_{b \in \fB} \tilde{g}_b + \mathrm{RapDec}(r)\|g\|_2,\quad \text{where}\quad 
\tilde{g}_b := \sum_{(\tilde{\theta}, \tilde{v}) \in \T_b[\rho]} \tilde{g}_{\tilde{\theta}, \tilde{v}}.
\end{equation}

By the spatial concentration property of the wave packets, it follows that
\begin{equation*}
    E\tilde{g}_b(\tilde{x}) = \mathbf{1}_{N_{\!\rho^{1/2 + \delta_m}}(\bZ+b)}(x)Eg(x) + \mathrm{RapDec}(r)\|g\|_2 
\end{equation*}
whenever $x = \tilde{x} + y$ for some $\tilde{x} \in B(0,\rho)$. The  decomposition in \eqref{g decomposition} therefore breaks~$E\tilde{g}$ into pieces with the property that each piece is concentrated on a $\rho^{1/2+\delta_m}$-neighbourhood of some translate of $\bZ$.

Finding the set of translates $\fB$ involves some technicalities and the precise statements are perhaps not quite as clean as the above discussion suggests. A rigorous version of \eqref{g decomposition} is given by the following proposition, which is implicit in~\cite{Guth} and is described more explicitly in~\cite{GHI}.

\begin{proposition}\label{random cover proposition} Let $B(y,\rho) \cap N_{\!\rho^{1/2+\delta_m}}\bZ \neq \emptyset$ and let $g$ be concentrated on wave packets from  $\T_{\bZ}[r]$. Then there is a set of translates $\mathfrak{B} \subset B(0,r^{1/2 + \delta_m})$ such that 
\begin{equation*}
    \|Eg\|_{\BL{k,A}^p(B(y,\rho))}^p \lesssim \log^2 r \sum_{b \in \mathfrak{B}} \|E\tilde{g}_b\|_{\BL{k,A}^p(B(0,\rho) \cap N_{\!\rho^{1/2 + \delta_m}}(\bZ - y + b))}^p + \mathrm{RapDec}(r)\|g\|_2^2
\end{equation*}
 and
\begin{equation*}
    \sum_{b \in \mathfrak{B}}\|\tilde{g}_b\|_2^2 \lesssim \|g\|_2^2.
\end{equation*}
\end{proposition}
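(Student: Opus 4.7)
The plan is to construct $\mathfrak{B}$ and the pieces $\tilde{g}_b$ from the wave packet decomposition of $\tilde{g}$ at scale $\rho$, assigning each wave packet to a translate of $\bZ$ determined by its parent at scale $r$. First I write out the wave packet decomposition $\tilde{g} = \sum_{(\tilde\theta,\tilde v) \in \T[\rho]} \tilde{g}_{\tilde\theta,\tilde v}$ and invoke Lemma~\ref{relation between scales lemma} to pass to a subcollection $\widetilde{\mathbb{W}} \subseteq \T[\rho]$ on which $\tilde{g}$ is concentrated, each of whose elements admits a parent $(\theta,v) \in \T_{\bZ}[r]$ satisfying the angle and containment relations of that lemma. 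Take $\mathfrak{B}$ to be a maximal $\rho^{1/2+\delta_m}$-separated subset of $B(0, r^{1/2+\delta_m})$; the task (addressed in the next paragraph) is to assign to each $(\tilde\theta, \tilde v) \in \widetilde{\mathbb{W}}$ some $b \in \mathfrak{B}$ for which the shifted tube $T_{\tilde\theta,\tilde v}+y$ lies in the $\rho^{1/2+\delta_m}$-neighbourhood of $\bZ + b$. Once such an assignment is in hand, let $\widetilde{\mathbb{W}}_b$ denote the resulting partition and set $\tilde{g}_b := \sum_{(\tilde\theta,\tilde v) \in \widetilde{\mathbb{W}}_b} \tilde{g}_{\tilde\theta,\tilde v}$.

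The verification that each $(\tilde\theta, \tilde v) \in \widetilde{\mathbb{W}}_b$ lies in $\T_b[\rho]$ — i.e., that $T_{\tilde\theta, \tilde v}$ is $\rho^{-1/2+\delta_m}$-tangent to $\bZ - y + b$ inside $B(0,\rho)$ — splits into the angle and containment conditions. The angle condition is essentially automatic: combining the parent's tangency bound $\angle(G(\xi_\theta), T_z\bZ) \lesssim r^{-1/2 + \delta_m}$ with the relation $\angle(G(\xi_{\tilde\theta}), G(\xi_\theta)) \lesssim \rho^{-1/2}$ from Lemma~\ref{relation between scales lemma} yields $\angle(G(\xi_{\tilde\theta}), T_z\bZ) \lesssim \rho^{-1/2+\delta_m}$, and this bound is invariant under translation of $\bZ$. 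The containment condition is where the geometry really enters: one must show that over $B(y, \rho)$ the portion of the $r^{1/2+\delta_m}$-neighbourhood of $\bZ$ relevant to the parent tubes is covered by the $\rho^{1/2+\delta_m}$-neighbourhoods of the translates $\bZ + b$ with $b \in \mathfrak{B}$. This rests on a smoothness estimate for transverse complete intersections: over a $\rho$-ball, $\bZ$ deviates from any given tangent hyperplane by an amount controlled in terms of $\rho$ and $\Deg\bZ$, so that choosing $\mathfrak{B}$ on a $\rho^{1/2+\delta_m}$-grid in the normal directions to $\bZ$ suffices to thread each scale-$\rho$ tube into a suitable translated neighbourhood.

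Once the decomposition is in place, the $L^2$ bound $\sum_{b} \|\tilde{g}_b\|_2^2 \lesssim \|g\|_2^2$ is immediate from the orthogonality property of the wave packet decomposition together with Plancherel, since the $\widetilde{\mathbb{W}}_b$ partition a subset of $\widetilde{\mathbb{W}}$. For the broad norm bound I first use the spatial concentration of $E\tilde{g}_b$ on $B(0,\rho) \cap N_{\rho^{1/2+\delta_m}}(\bZ - y + b)$ (up to rapidly decaying errors) to localise the estimate. Two dyadic pigeonholing steps then convert the trivial triangle-inequality bound into the desired $\ell^p$-type inequality: first, group the translates by the dyadic size of $\|E\tilde{g}_b\|_{\BL{k,A}^p}$; second, refine within each group so that on any single $K^2$-ball only boundedly many of the essential supports $N_{\rho^{1/2+\delta_m}}(\bZ - y + b)$ overlap. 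Each step costs a factor of $\log r$, and the triangle and logarithmic convexity inequalities for broad norms (Lemmas~\ref{triangle inequality lemma} and~\ref{logarithmic convexity inequality lemma}) then deliver the estimate with an overall $\log^2 r$ loss.

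The hard part will be the geometric content of the middle paragraph — the construction of a family of translates that covers, at the appropriate scale, the tangential portion of a large neighbourhood of $\bZ$. All the analytic ingredients (orthogonality, triangle inequality, spatial concentration, pigeonholing) are standard or appear in the earlier arguments of~\cite{Guth}; what requires care is the quantitative description of how $\bZ$ deviates from its tangent hyperplanes within a $\rho$-ball, together with the bookkeeping needed to ensure that the resulting $\mathfrak{B}$ is both small enough to control and large enough to capture every relevant scale-$\rho$ tube.
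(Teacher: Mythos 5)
First, a point of comparison: the paper does not actually prove Proposition~\ref{random cover proposition} --- it records that the statement is implicit in~\cite{Guth}, explicit in~\cite{GHI}, and that ``the lemma can be proved by independently selecting the translates $b$ at random.'' Your proposal replaces this random selection with a deterministic one (a maximal $\rho^{1/2+\delta_m}$-separated net in $B(0,r^{1/2+\delta_m})$, or a grid ``in the normal directions''), and that is where the gaps are. The parts of your sketch that do work are the angle condition (inherited from the parent via Lemma~\ref{relation between scales lemma}, as you say) and the $L^2$ bound, which is indeed immediate from orthogonality once each wave packet is assigned to a single translate.

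The first genuine gap is the broad-norm inequality. After the assignment you have $Eg=\sum_{b}E\tilde{g}_b(\cdot-y)+\mathrm{RapDec}(r)\|g\|_2$ with up to $\#\mathfrak{B}\sim(r/\rho)^{n(1/2+\delta_m)}$ nonzero summands, and you must reach $\|Eg\|^p\lesssim\log^2 r\sum_b\|E\tilde{g}_b\|^p$ with the \emph{same} parameter $A$ on both sides (note that each use of Lemma~\ref{triangle inequality lemma} halves $A$, so even $O(\log r)$ applications of the triangle inequality do not return the stated estimate). Your second pigeonholing step --- ``refine within each group so that on any single $K^2$-ball only boundedly many of the essential supports overlap'' --- is not an available operation: passing to a refinement (a subset of comparable cardinality) cannot decrease the overlap of a family of sets, and discarding translates discards wave packets and destroys the decomposition of $Eg$ that the upper bound requires. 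Moreover the overlap is genuinely unbounded for your choice of $\mathfrak{B}$: already when $\bZ$ is a hyperplane, all $b$ in the net with the same component normal to $\bZ$ produce the identical set $N_{\rho^{1/2+\delta_m}}(\bZ-y+b)$, so about $(r/\rho)^{(n-1)(1/2+\delta_m)}$ of your translates share one essential support. The fix you gesture at --- a grid in the normal directions only --- is well defined for a hyperplane but not for a curved transverse complete intersection, where a translation tangential to $\bZ$ at one point is transverse at another. Controlling the multiplicity of the cover (whence the $\log^2 r$) is precisely what the random choice of $\mathfrak{B}$, combined with a dyadic pigeonholing of the wave packets according to the measure of $\{b:(\tilde{\theta},\tilde{v})\in\T_b[\rho]\}$, accomplishes in the proofs of \cite[Proposition 8.1]{Guth} and \cite[Lemma 10.5]{GHI}.

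The second gap is the containment step, which you correctly identify as ``the hard part'' but leave undone: one must show that every scale-$\rho$ tube admitting a parent in $\T_{\bZ}[r]$ lies in $N_{\rho^{1/2+\delta_m}}(\bZ-y+b)$ for all $b$ in a ball of radius $\gtrsim\rho^{1/2+\delta_m}$, so that the sample of translates catches it. This does not follow from a smoothness estimate for $\bZ$ against its tangent planes alone; the point is that the \emph{parent's} angle condition prevents the child tube from drifting across the thick neighbourhood $N_{r^{1/2+\delta_m}}\bZ$ over its length $\rho$, and this computation is the substance of the cited arguments. As it stands, your proposal reproduces the outline of the known proof but omits its two essential mechanisms.
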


The lemma can be proved by independently selecting the translates $b$ at random, although this argument involves some technicalities. See the proof of \cite[Proposition 8.1]{Guth} or \cite[Lemma 10.5]{GHI} for further details.

\begin{figure}
    \centering
         \begin{tikzpicture}[scale=1]
\draw [black, dashed, domain=-4:4,name path=upper_A] plot (\x, {\x/4-\x/4*\x/4*\x/4 + 1}); 
\draw [black, dashed, domain=-4:4,name path=lower_A] plot (\x, {\x/4-\x/4*\x/4*\x/4 -1});

\tikzfillbetween[
    of=upper_A and  lower_A, on layer=bg] {pattern=dots, pattern color=black!50};

{
 \filldraw[yellow!50,cm={cos(85) ,-sin(85), sin(85), cos(85) ,(0 cm, 0 cm)}](0.6,4) -- (0.6,-4) -- (-0.6,-4) -- (-0.6,4) -- (0.6,4);
  \draw[yellow,thick, cm={cos(85) ,-sin(85), sin(85), cos(85) ,(0 cm, 0 cm)}](0.6,4) -- (0.6,-4);
 \draw[yellow,thick, cm={cos(85) ,-sin(85), sin(85), cos(85) ,(0 cm, 0 cm)}](-0.6,4) -- (-0.6,-4);
}

\draw [black, line width=0.5mm, domain=-4:4] plot (\x, {\x/4-\x/4*\x/4*\x/4});

\fill[fill=white] (3.7cm,0) arc [white,radius=3.7cm, start angle=0, delta angle=180]
                  -- (-4.2cm,0) arc [white, radius=4.2cm, start angle=178, delta angle=-180]
                  -- cycle;
\fill[fill=white] (3.7cm,0) arc [white,radius=3.7cm, start angle=0, delta angle=-180]
                  -- (-4.2cm,0) arc [white, radius=4.2cm, start angle=180, delta angle=180]
                  -- cycle;

\draw[blue,thick,dashed] (0,0) circle (3.7cm);

\node[right, scale=1.5] at  (4cm,0cm) {$\mathbf{Z}$};
\node[scale=1.5] at  (1cm, 1.7cm) {$N_{r^{1/2 + \delta_m}}\mathbf{Z}$};
\node[scale=1.5] at  (4cm, -2.5cm) {$B(y, \rho)$};

\node[scale=1.5, left] (A) at  (-4cm, 1.5cm) {$T_{\theta,v}$};
\node[scale=1.5] (B) at  (-2cm, 0cm) {};

\draw[->, line width=0.5mm](A) to [bend left,looseness=0.7] (B);
\end{tikzpicture}
    \caption{The large scale tube $T_{\theta,v}$ is $r^{-1/2+\delta_m}$-tangent to $\bZ$ in $B(0,r)$. Here we consider its intersection with $B(y,\rho)$.}
    \label{tangent figure 1}
\end{figure}
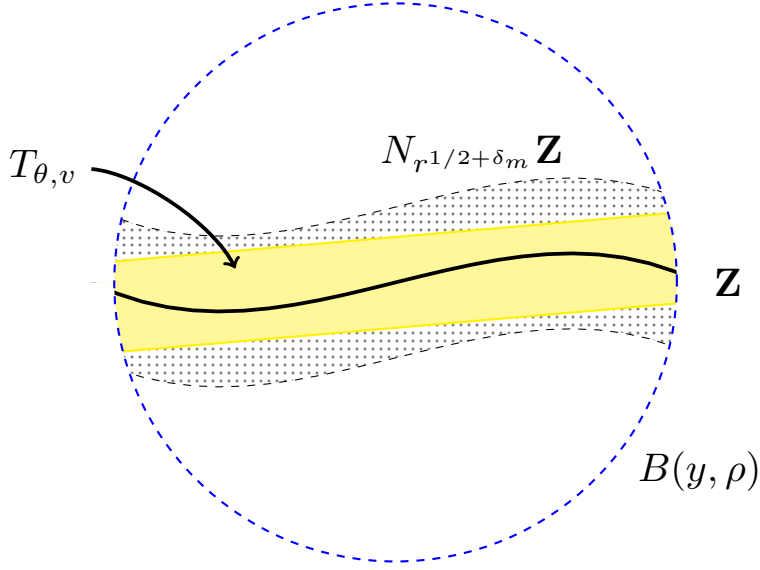

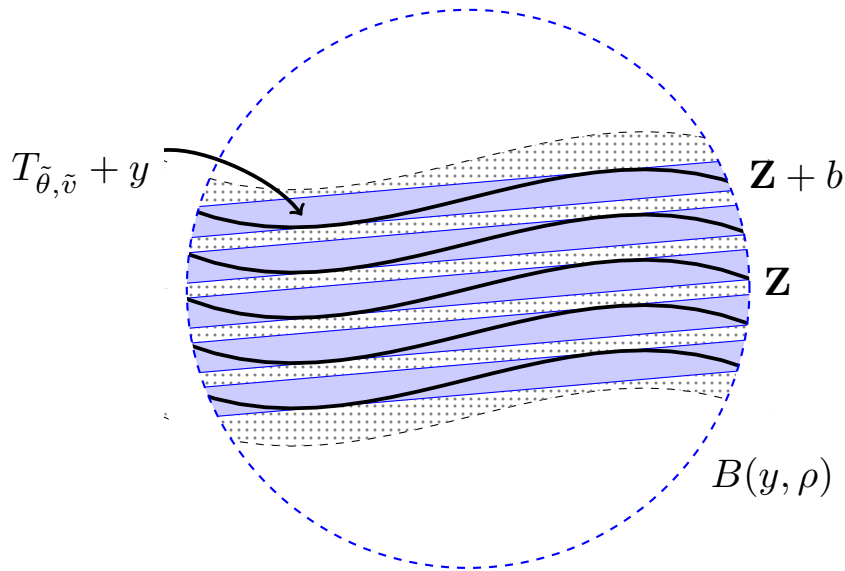
\begin{figure}
    \centering
    \begin{tikzpicture}


\draw [black, dashed, domain=-4:4,name path=lower_B] plot (\x, {\x/4-\x/4*\x/4*\x/4 -1.7}); 
\draw [black, dashed, domain=-4:4,name path=upper_B] plot (\x, {\x/4-\x/4*\x/4*\x/4 +1.7}); 

\tikzfillbetween[
    of=upper_B and  lower_B] {pattern=dots, pattern color=black!50};

\foreach \a in {-1.2, -0.6,..., 1.2}
    {
 \filldraw[blue!20,cm={cos(85) ,-sin(85) ,sin(85) ,cos(85) ,(0 cm, \a cm)}](0.2,3.8) -- (0.2,-3.8) -- (-0.2,-3.8) -- (-0.2,3.8) -- (0.2,3.8);
  \draw[blue,cm={cos(85) ,-sin(85) ,sin(85) ,cos(85) ,(0 cm, \a cm)}](0.2,3.8) -- (0.2,-3.8) -- (-0.2,-3.8) -- (-0.2,3.8) -- (0.2,3.8);
 
}
\foreach \a in {-1.2, -0.6,..., 1.2}
    {
\draw [black, line width=0.5mm, domain=-4:4] plot (\x, {\x/4-\x/4*\x/4*\x/4 + \a}); 
}

\fill[fill=white] (3.7cm,0cm) arc [white,radius=3.7cm, start angle=0, delta angle=180]
                  -- (-4.3cm,0cm) arc [white, radius=4.3cm, start angle=180, delta angle=-180]
                  -- cycle;
\fill[fill=white] (3.7cm,0cm) arc [white,radius=3.7cm, start angle=0, delta angle=-180]
                  -- (-4.3cm,0cm) arc [white, radius=4.3cm, start angle=180, delta angle=180]
                  -- cycle;

\draw[blue,thick,dashed] (0,0) circle (3.7cm);

\node[right, scale=1.5] at  (3.7cm,0.1cm) {$\mathbf{Z}$};

\node[right, scale=1.5] at  (3.5cm,1.5cm) {$\mathbf{Z}+b$};

\node[scale=1.5] at  (4cm, -2.5cm) {$B(y, \rho)$};

\node[scale=1.5, left] (A) at  (-4cm, 1.5cm) {$T_{\tilde{\theta},\tilde{v}} + y$};
\node[scale=1.5] (B) at  (-2cm, 0.8cm) {};

\draw[->, line width=0.5mm](A) to [bend left,looseness=0.7] (B);

\end{tikzpicture}
    \caption{The scale $\rho$ wave packets are partitioned into collections $\T_b[\rho]$. For each $(\tilde{\theta},\tilde{v}) \in \T_b[\rho]$ the corresponding tube $T_{\tilde{\theta},\tilde{v}}+y$ is tangent to the translate $\bZ + b$ in $B(y,\rho)$.}
    \label{tangent figure 2}
\end{figure}




\subsection{Transverse equidistribution estimates}\label{transverse equidistribution section} If $h \colon B^{n-1} \to \C$ is concentrated on wave packets from $\T_{\bZ}[r]$, then this property constrains the support of $h$ (since points of the support of $h$ roughly correspond to directions of the wave packets). This in turn influences the behaviour of $Eh$ via the uncertainty principle. In particular, it transpires that $Eh$ is essentially constant at scale $r^{1/2}$ in directions transverse to the variety $\bZ$. This phenomenon is encapsulated in the \emph{transverse equidistribution estimate} of \cite[Section~6]{Guth} which roughly states that
\begin{equation}\label{equidistribution heuristic}
    \frac{1}{|N_{\!\rho^{1/2}}\bZ \cap B_{r^{1/2}}|}\int_{N_{\!\rho^{1/2}}\bZ \cap B_{\!r^{1/2}}}|Eh|^2 \lesssim  \frac{1}{|B_{r^{1/2}}|}\int_{B_{\!r^{1/2}}}|Eh|^2 
\end{equation}
for any  $r^{1/2}$-ball $B_{r^{1/2}}$ and $1 \leq \rho \leq r$.
An informative case to have in mind is given by taking $\bZ$ to be a plane in the co-ordinate hyperplane perpendicular to $e_n$; in this situation, a rigorous version of the above inequality can be readily verified along the lines discussed above. For the general case, the reader is referred to Sections~2 and 6 of~\cite{Guth} for a more detailed discussion of the transverse equidistribution phenomenon, which plays a fundamental role in~\cite{Guth} and also here.

It is of particular interest to apply these observations to $h := \tilde{g}_b$, where $\tilde{g}_b$ is one of the functions introduced in the previous subsection. Indeed, by the discussion in Section~\ref{tangency at different scales section}, the operator $|E\tilde{g}_b|$ is concentrated in $N_{\rho^{1/2+\delta_m}}(\bZ - y + b)$ and so expressions of the form of the left-hand side of \eqref{equidistribution heuristic} naturally arise in this context. 

Estimates for $L^2$ quantities involving $E\tilde{g}_b$ can be related to $L^2$ estimates for the input function $\tilde{g}_b$ via Plancherel's theorem or, more precisely, the energy identity~\eqref{energy identity}. The following consequence of transverse equidistribution will be useful, which is established in Section 7 of~\cite{Guth}.

\begin{lemma}[Guth~\cite{Guth}] \label{transverse equidistribution lemma} Let $1 \leq \rho'\le  \rho \leq r$ and  $|b| \lesssim r^{1/2+\delta_m}$. Let $\bZ$ be $m$-dimensional and let $g$ be concentrated on wave packets from $\T_{\bZ}[r]$. Then
\begin{equation}\label{transverse equidistribution inequality}
\max_{\theta : (\rho')^{-1/2}-\mathrm{cap}}\|\tilde{g}_b\|_{L^2(\theta)}^2 \lesssim_{\Deg \bZ} \,\,\,\,r^{O(\delta_m)}\Big(\frac{r}{\rho}\Big)^{-\frac{n-m}{2}}\!\!\!\!\max_{\theta : (\rho')^{-1/2}-\mathrm{cap}}\|g\|_{L^2(\theta)}^2,
\end{equation}
where $\tilde{g}_b$ is defined with respect to scale $\rho$ wave packets as in \eqref{g decomposition}.
\end{lemma}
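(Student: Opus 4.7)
The strategy is to transfer the squared $L^2$-norm $\|\tilde g_b\|_{L^2(\theta)}^2$ on a $(\rho')^{-1/2}$-cap into a spatial integral of $|E\tilde g_b|^2$ via a local Plancherel identity, use the tangency hypothesis to localise this integral to the thin neighbourhood $N_{\rho^{1/2+\delta_m}}(\bZ+b-y)$, and then extract the gain $(r/\rho)^{-(n-m)/2}$ by transverse equidistribution, comparing with the analogous integral of $|Eg|^2$ over the fatter neighbourhood $N_{r^{1/2+\delta_m}}\bZ$. The $r^{O(\delta_m)}$ loss and the $\Deg \bZ$ dependence both arise from the equidistribution step.

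In more detail, fix a $(\rho')^{-1/2}$-cap $\theta$ attaining the maximum on the left-hand side, and for each $\rho^{-1/2}$-cap $\tilde\theta \subseteq \theta$ set $\tilde g_{b,\tilde\theta} := \sum_{\tilde v : (\tilde\theta,\tilde v)\in\T_b[\rho]}\tilde g_{\tilde\theta,\tilde v}$. By local $L^2$ orthogonality of the scale-$\rho$ wave packets (Section~\ref{wave packet decomposition section}),
\begin{equation*}
    \|\tilde g_b\|_{L^2(\theta)}^2 \sim \sum_{\tilde\theta \subseteq \theta}\|\tilde g_{b,\tilde\theta}\|_{2}^2,
\end{equation*}
and using the energy identity \eqref{energy identity} together with the spatial concentration estimate \eqref{concentration estimate}, each summand can be rewritten as $\rho^{-1}$ times an integral of $|E\tilde g_b|^2$ over the union of tubes $\bigcup_{\tilde v}T_{\tilde\theta,\tilde v}$, up to $\mathrm{RapDec}(r)\|g\|_2^2$ errors. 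By the discussion in Section~\ref{tangency at different scales section}, $E\tilde g_b$ agrees, up to $\mathrm{RapDec}(r)\|g\|_2$, with $\mathbf{1}_{N_{\rho^{1/2+\delta_m}}(\bZ+b-y)}\cdot Eg(\cdot+y)$, so only the portion of each tube meeting $N_{\rho^{1/2+\delta_m}}(\bZ+b-y)$ contributes.

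The decisive gain is then supplied by transverse equidistribution \eqref{equidistribution heuristic}: under the hypothesis that $g$ is tangential to $\bZ$ at scale $r$, the integrand $|Eg|^2$ is essentially constant at scale $r^{1/2}$ in the $(n-m)$ directions normal to $\bZ$. Consequently, replacing integration over the thin neighbourhood $N_{\rho^{1/2+\delta_m}}(\bZ+b-y)$ by integration over the fatter neighbourhood $N_{r^{1/2+\delta_m}}\bZ$ costs only the volume ratio
\begin{equation*}
    \frac{|N_{r^{1/2+\delta_m}}\bZ \cap T_{\tilde\theta,\tilde v}|}{|N_{\rho^{1/2+\delta_m}}\bZ \cap T_{\tilde\theta,\tilde v}|} \lesssim_{\Deg \bZ} r^{O(\delta_m)}\,(r/\rho)^{(n-m)/2},
\end{equation*}
yielding the factor $(r/\rho)^{-(n-m)/2}$ in the final bound. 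Running the local Plancherel chain in reverse with $g$ in place of $\tilde g_b$ and the same cap $\theta$ bounds the resulting spatial integral of $|Eg|^2$ by $\max_{\theta'}\|g\|_{L^2(\theta')}^2$, completing the proof.

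The principal technical obstacle is executing the transverse equidistribution step rigorously: one must quantify the heuristic \eqref{equidistribution heuristic} uniformly over transverse complete intersections of bounded degree, in a form compatible with the tube/plate geometry dictated by the cap $\theta$. This analysis, carried out in Sections~6--7 of \cite{Guth}, exploits the curvature of the paraboloid to translate the tangency constraint into a frequency-side support restriction that forces $|Eg|^2$ to be essentially constant in the normal directions to $\bZ$ at scale $r^{1/2}$, and is the source of the $\Deg \bZ$ dependence and the $r^{O(\delta_m)}$ loss in the final estimate.
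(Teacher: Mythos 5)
Your outline is correct and takes essentially the same route as the paper, which does not reprove this lemma but cites \cite[Lemma 7.6]{Guth} (and equation (8.26) there) for the $\rho'=1$ case, remarking that the factor gained is precisely the volume ratio of the sets of integration in the heuristic \eqref{equidistribution heuristic} --- exactly the mechanism you describe via local Plancherel, spatial concentration on the thin neighbourhood, and transverse equidistribution. Like the paper, you defer the genuinely hard step (the rigorous equidistribution estimate for transverse complete intersections of bounded degree) to Sections 6--7 of \cite{Guth}, so your proposal is an accurate reduction to that external input rather than a self-contained proof, which is all the paper itself provides.
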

Note that the factor gained in \eqref{transverse equidistribution inequality} is the ratio of the volumes of the sets of integration in  \eqref{equidistribution heuristic}.
 The inequality \eqref{transverse equidistribution inequality} is explicitly stated in \cite[Lemma 7.6]{Guth} for the case $\rho' = 1$; the version for general  $1 \leq \rho'\leq \rho$ can be deduced via similar arguments (see also the equation (8.26) from~\cite{Guth}).




\subsection{Applying the polynomial Wolff axioms} Theorem~\ref{Katz--Rogers theorem} can be expressed in terms of wave packets. 

\begin{proposition}[\cite{KR}]\label{reformulated Katz--Rogers} Let $\delta>0$, $c,r\ge1$ and $\bZ \subseteq \R^n$ be $m$-dimensional. If $\mathbb{W} \subseteq \T[r]$ is such that $  T_{\theta,v} \subseteq N_{cr^{1/2}}\bZ$ for all $(\theta,v) \in \mathbb{W}$, then
\begin{equation*}
    \#\Big\{\, \theta\, :\, (\theta,v) \in \mathbb{W} \textrm{ for some } v \in r^{1/2}\mathbb{Z}^{n-1}\, \Big\} \lesssim_{\Deg \bZ} c^{n-m}r^{\frac{m-1}{2} + \delta}.
\end{equation*}
\end{proposition}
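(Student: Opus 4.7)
The plan is to reformulate the statement as a direct application of Theorem~\ref{Katz--Rogers theorem}. The strategy is to select, for each distinct cap $\theta$ appearing in the first coordinate of $\mathbb{W}$, one representative wave packet $(\theta, v) \in \mathbb{W}$, and then exhibit the associated tubes as an admissible family for the polynomial Wolff axioms. First I would form the collection
\begin{equation*}
\mathbf{T} := \Big\{\, T_{\theta,v}\, :\, (\theta,v) \textrm{ a selected representative} \,\Big\}.
\end{equation*}
By hypothesis every $T_{\theta,v} \in \mathbf{T}$ satisfies $T_{\theta,v} \subseteq N_{cr^{1/2}}\bZ$, and by the definition of the wave packet decomposition each $T_{\theta,v}$ lies in $B(0,r) \subseteq B(0, 2r)$, giving the required containment in a $2r$-ball.

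The next step is a small bookkeeping point: the wave packet tubes $T_{\theta,v}$ are cylinders of length $O(r)$ but radius $r^{1/2+\delta}$ rather than $r^{1/2}$. This is resolved by replacing each $T_{\theta,v}$ with the coaxial $r^{1/2}$-thin cylinder of length $r$; this thinner tube sits inside the fat tube, hence inside $N_{cr^{1/2}}\bZ$, and splitting the length-$2r$ part into two length-$r$ pieces only costs a harmless factor of $2$. The key direction-separation hypothesis follows from the fact that the caps $\theta \in \Theta_r$ are finitely overlapping balls of radius $r^{-1/2}$, so their centers $\xi_\theta$ are $\gtrsim r^{-1/2}$-separated. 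Since the Gauss map $G$ defined in \eqref{Gauss map} is smooth with uniformly non-degenerate derivative on the compact domain $B^{n-1}$, the tube directions $G(\xi_\theta)$ inherit a pairwise separation of order $r^{-1/2}$. After rescaling $c$ by an absolute constant if necessary to absorb the bi-Lipschitz constant of $G$, the hypotheses of Theorem~\ref{Katz--Rogers theorem} are satisfied by $\mathbf{T}$.

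Invoking Theorem~\ref{Katz--Rogers theorem} then yields
\begin{equation*}
\#\mathbf{T} \lesssim_{\Deg \bZ,\delta} c^{n-m} r^{\frac{m-1}{2} + \delta},
\end{equation*}
and since $\#\mathbf{T}$ equals the number of distinct $\theta$'s appearing in $\mathbb{W}$ by construction, the proposition follows. There is no substantive obstacle here; the only non-trivial point to articulate carefully is the reduction from $r^{1/2+\delta}$-fat wave packet tubes to the $r^{1/2}$-tubes required by the polynomial Wolff axioms, which is handled by passing to the coaxial thinner tubes.
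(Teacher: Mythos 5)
Your argument is correct and is exactly the reduction the paper intends: the proposition is stated as a direct reformulation of Theorem~\ref{Katz--Rogers theorem}, with one representative tube per cap, direction separation coming from the $r^{-1/2}$-separation of the cap centres under the Gauss map, and the passage from the $r^{1/2+\delta}$-fat wave packet tubes to genuine $r$-tubes handled by coaxial shrinking. The only cosmetic quibble is that the constant lost in the direction separation (from the bi-Lipschitz constant of $G$) is absorbed by splitting $\mathbf{T}$ into $O(1)$ genuinely $r^{-1/2}$-separated subfamilies rather than by rescaling $c$, but this changes nothing.
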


From this geometric bound, we deduce an  estimate involving the averaged norm
$$\|f\|_{L^2_{\mathrm{avg}}(\theta)} := \Big(\frac{1}{|\theta|}\int_\theta |f(\xi)|^2\ud \xi\Big)^{1/2},$$
which is a higher dimensional generalisation of an inequality that featured prominently in~\cite{Guth2016}. 

\begin{lemma}\label{nesting lemma} Let $\delta>0$, $c,r\ge1$ and $\bZ \subseteq \R^n$ be $m$-dimensional. If $g$ is concentrated on wave packets $(\theta,v) \in \T[r]$ satisfying $T_{\theta,v} \subseteq N_{cr^{1/2}}\bZ$, then
\begin{equation*}
    \|g\|_{L^2(B^{n-1})}^2 \lesssim_{\Deg \bZ} c^{n-m}r^{-\frac{n - m}{2} + \delta} \max_{\theta : r^{-1/2}-\mathrm{cap}} \|g\|_{L^2_{\mathrm{avg}}(\theta)}^2.
\end{equation*} 
\end{lemma}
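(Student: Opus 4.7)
\smallskip

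\textbf{Proof proposal for Lemma \ref{nesting lemma}.} The plan is to combine the orthogonality of the wave packet decomposition with the direction-counting estimate of Proposition \ref{reformulated Katz--Rogers}, and then pass from $L^2(\theta)$ to $L^2_{\mathrm{avg}}(\theta)$ by absorbing the volume factor $|\theta| \sim r^{-(n-1)/2}$.

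First, let $\mathbb{W} \subseteq \T[r]$ denote the collection of wave packets on which $g$ is concentrated, so in particular $T_{\theta,v} \subseteq N_{cr^{1/2}}\bZ$ for every $(\theta,v) \in \mathbb{W}$. Using the concentration hypothesis (Definition~\ref{conc}) together with the boundedness of the domain $B^{n-1}$ to pass from $L^\infty$ to $L^2$, and then invoking the $L^2$-orthogonality of the wave packets, write
\[
\|g\|_{L^2(B^{n-1})}^2 \lesssim \Big\|\sum_{(\theta,v) \in \mathbb{W}} g_{\theta,v}\Big\|_2^2 + \mathrm{RapDec}(r)\|g\|_2^2 \sim \sum_{(\theta,v) \in \mathbb{W}} \|g_{\theta,v}\|_2^2 + \mathrm{RapDec}(r)\|g\|_2^2,
\]
and absorb the rapidly decaying term into the left-hand side.

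Next I would group the wave packets by direction. Set
\[
\Theta_* := \Big\{\, \theta \,:\, (\theta,v) \in \mathbb{W} \text{ for some } v \in r^{1/2}\Z^{n-1}\, \Big\},
\]
and for each $\theta \in \Theta_*$ recall that $\sum_v g_{\theta,v} = g \cdot \psi_\theta$ (this follows from Fourier inversion for series, using that $\tilde{\psi}_\theta \equiv 1$ on $\operatorname{supp} \psi_\theta$). Orthogonality in $v$ at fixed $\theta$ then gives $\sum_v \|g_{\theta,v}\|_2^2 \sim \|g\psi_\theta\|_2^2 \lesssim \|g\|_{L^2(\theta)}^2$, and therefore
\[
\|g\|_{L^2(B^{n-1})}^2 \lesssim \sum_{\theta \in \Theta_*} \|g\|_{L^2(\theta)}^2 \leq (\#\Theta_*) \cdot |\theta| \cdot \max_{\theta : r^{-1/2}\text{-cap}} \|g\|_{L^2_{\mathrm{avg}}(\theta)}^2.
\]

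Finally, the hypothesis $T_{\theta,v} \subseteq N_{cr^{1/2}}\bZ$ for all $(\theta,v) \in \mathbb{W}$ places us exactly in the setting of Proposition \ref{reformulated Katz--Rogers}, which yields
\[
\#\Theta_* \lesssim_{\Deg \bZ} c^{n-m} r^{\frac{m-1}{2} + \delta}.
\]
Combining this with $|\theta| \sim r^{-(n-1)/2}$ produces the prefactor
\[
c^{n-m} r^{\frac{m-1}{2} + \delta} \cdot r^{-\frac{n-1}{2}} = c^{n-m} r^{-\frac{n-m}{2} + \delta},
\]
which is precisely the claimed bound. The only mildly delicate points are the passage from the $L^\infty$ concentration statement to an $L^2$ statement (routine, since $B^{n-1}$ is bounded) and the book-keeping between $\|g\psi_\theta\|_2^2$, $\|g\|_{L^2(\theta)}^2$, and $|\theta|\|g\|_{L^2_{\mathrm{avg}}(\theta)}^2$; no genuine obstacle arises, so the proof is really just an orthogonality-plus-counting argument with Proposition \ref{reformulated Katz--Rogers} supplying the direction count.
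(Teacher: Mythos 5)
Your proposal is correct and follows essentially the same route as the paper's proof: decompose into wave packets, group by cap, use orthogonality to bound $\|g\|_2^2$ by $\sum_{\theta}\|g\|_{L^2(\theta)}^2$, count the caps via Proposition~\ref{reformulated Katz--Rogers}, and absorb the volume factor $|\theta| \sim r^{-(n-1)/2}$ to convert to the averaged norm. The only difference is that you spell out the $L^2(\theta)$ versus $L^2_{\mathrm{avg}}(\theta)$ bookkeeping, which the paper leaves implicit.
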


\begin{proof} By the concentration hypothesis one may write
\begin{equation*}
    g = \sum_{(\theta,v) \in \mathbb{W}} g_{\theta,v} + \mathrm{RapDec}(r)\|g\|_2
\end{equation*}
where $\mathbb{W}$ are scale $r$ wave packets satisfying $  T_{\theta,v} \subseteq N_{cr^{1/2}}\bZ$. Given an $r^{-1/2}$-cap $\theta$, define 
\begin{equation*}
\T_{\bZ}(\theta) := \Big\{\, v \in r^{1/2}\Z^{n-1}\, :\, (\theta,v) \in \mathbb{W}\, \Big\} 
\end{equation*}
 and let $\Theta_{\bZ}$ denote the collection of all $r^{-1/2}$-caps $\theta$ for which $\T_{\bZ}(\theta) \neq \emptyset$. Thus, by the orthogonality and support properties of the wave packets,
\begin{equation*}
    \|g\|_{2}^2 \sim \sum_{\theta \in \Theta_{\bZ}} \Big\|\sum_{v \in \T_{\bZ}(\theta)} g_{\theta,v}\Big\|_{2}^2 \lesssim  \sum_{\theta \in \Theta_{\bZ}} \|g\|_{L^2(\theta)}^2.
\end{equation*}
To prove the lemma it therefore suffices to show that
\begin{equation*}
    \# \Theta_{\bZ} \lesssim_{\Deg \bZ} c^{n-m}r^{\frac{m-1}{2} + \delta},
\end{equation*}
but this immediately follows from Proposition~\ref{reformulated Katz--Rogers}.
\end{proof}



\section{Finding polynomial structure}\label{structure lemma section}

The purpose of this section is to reformulate the core of the (inductive) proofs in~\cite{Guth2016, Guth}  as a recursive process. The argument will in fact be presented as two separate algorithms:
\begin{itemize}
    \item \texttt{[alg 1]} is the more involved of the two and is presented in the current section. It effects a dimensional reduction, essentially passing from an $m$-dimensional to an $(m-1)$-dimensional situation. 
    \item  \texttt{[alg 2]} is described in Section~\ref{proof section} below. It consists of repeated application of the first algorithm to reduce to a minimal dimensional case. 
\end{itemize}
Comparing the present analysis with the original induction arguments of Guth, \texttt{[alg 1]} corresponds to the induction on the radius in the proof of Proposition 8.1 of~\cite{Guth}, whilst \texttt{[alg 2]} corresponds to the induction on dimension.

\subsection*{The first algorithm}  Throughout this section let $p \geq 2$,  $0< \varepsilon \ll 1$ be fixed and
\begin{equation}\label{small parameters}
    \varepsilon^{C} \leq \delta \ll \delta_n \ll \delta_{n-1} \ll \dots \ll \delta_1 \ll \delta_0 \ll \varepsilon
\end{equation}
be a family of small parameters. Taking, for instance, $\delta_0 := \varepsilon^{10}$, $\delta_j := \delta_{j-1}^{10}$ for $1 \leq j \leq n$ and $\delta := \delta_n^{10}$ suffices.  These parameters play a rather technical role\footnote{They are essentially used to compensate for certain $r^{\bar{C}\delta_m}$-losses arising from the transverse equidistribution lemma.} and are chosen so as to satisfy the requirements of the forthcoming proof.\\ 

\paragraph{\underline{\texttt{Input}}} \texttt{[alg 1]} will take as its input:
\begin{itemize}
    \item An $r$-ball $B_{r} \subset \R^n$ for some choice of large scale $r \gg 1$.
    \item A transverse complete intersection $\bZ$ of dimension $m \geq 2$.
  
    \item A function $f \in B^{n-1} \to \C$ concentrated on wave packets which are $r^{-1/2+\delta_m}$-tangent to $\bZ$ in $B_{r}$.
    
    \item An admissible large integer $A \in \N$. 
\end{itemize}

\begin{remark} The integer $A$ corresponds to the $A$ parameter featured in the definition of the broad norm. It is chosen large enough to facilitate repeated application of Lemma \ref{triangle inequality lemma} and Lemma \ref{logarithmic convexity inequality lemma}. These lemmas will be used no more than $\delta^{-2}$ times and so it suffices to take $A \geq 2^{\delta^{-2}}$: see the discussion following \eqref{letter count} below.
\end{remark}

\vspace{1em}

\paragraph{}The description of the output of the algorithm is, unfortunately, far more involved.

\vspace{1em}

%
%
%
\paragraph{\underline{\texttt{Output}}} \texttt{[alg 1]} will output a finite sequence of sets $(\sE_j)_{j=0}^J$, which are constructed via a recursive process. Each $\sE_j$ is referred to as an \emph{ensemble} and contains all the relevant information coming from the $j$th step of the algorithm. In particular, the ensemble $\sE_j$ consists of:
\begin{itemize}
    \item A word $\fh_j$ of length $j$ in the alphabet $\{\ta, \tc\}$, is referred to as a {\it history}. The rationale behind this notation is that $\ta$ is an abbreviation of `algebraic' and $\tc$ `cellular'. The words $\fh_j$ are recursively defined by successively adjoining a single letter. Each $\fh_j$ records how the cells $O_j \in \O_j$ were constructed via repeated application of the polynomial partitioning theorem and, in particular, whether the algebraic or cellular case held in successive stages of the process.
    \item A choice of spatial scale $\rho_j \geq 1$. The $\rho_j$ will in fact be completely determined by the initial scale $r$ and the history $\fh_j$. In particular, define an auxiliary exponent $\tilde{\delta}_{m-1}$ by 
    \begin{equation}\label{auxiliary delta}
        (1-\tilde{\delta}_{m-1})(1/2 + \delta_{m-1}) = (1/2 + \delta_m),
    \end{equation}    
noting that $\delta_{m-1}/2 \leq \tilde{\delta}_{m-1} \leq 2\delta_{m-1}$.  
Let $\sigma_{k} \colon [1,\infty) \to [0,\infty)$ be given by
    \begin{equation*}
        \sigma_{k}(\rho) := \left\{ \begin{array}{ll}
        \displaystyle\frac{\rho}{2} &  \textrm {if the $k$th letter of $\fh_j$ is $\tc$} \\[6pt]
        \rho^{1-\tilde{\delta}_{m-1}} & \textrm{if the $k$th letter of $\fh_j$ is $\ta$}
        \end{array}\right. 
    \end{equation*}
for each $1 \leq k \leq j$. With these definitions, take 
\begin{equation*}
\rho_j := \sigma_{j} \circ \cdots \circ \sigma_{1}(r);
\end{equation*}
this sequence of scales is represented pictorially by the tree in Figure~\ref{scales figure}.

Note that each $\sigma_{k}$ is a decreasing function and therefore 
\begin{equation}\label{radius bounds}
\rho_j \leq r^{(1-\tilde{\delta}_{m-1})^{\#_{\sta}(j)}}   \quad \textrm{and} \quad \rho_j \leq \frac{r}{2^{\#_{\stc}(j)}}  
\end{equation}
  where $\#_{\bta}(j)$ and  $\#_{\btc}(j)$ denote the number of occurrences of $\ta$ and $\tc$ in the history $\fh_j$, respectively. 
   \end{itemize}
\begin{remark} It is perhaps useful to give some justification for the introduction of the auxiliary exponent $\tilde{\delta}_{m-1}$. In the algebraic case one passes from some scale $\rho_j$ to a new scale $\rho_{j+1} := \rho_j^{1-\tilde{\delta}_{m-1}}$; this is encoded in the definition of $\sigma_k(\rho)$ above. In the last step of the current algorithm the analysis passes to a lower dimensional variety (or we end up in a trivial small scale case). For this step, one wishes to analyse tangency properties of the wave packets at the new scale $\rho_{j+1}$ with respect to an $(m-1)$-dimensional variety. Looking at the second condition in Definition~\ref{tangent definition}, this involves analysis at the scale $\rho_{j+1}^{1/2+\delta_{m-1}}$. The formula \eqref{auxiliary delta} ensures that $\rho_{j+1}^{1/2+\delta_{m-1}} = \rho_{j}^{1/2+\delta_{m}}$; this allows certain tangency properties to be inherited at the new scale. 
\end{remark}
 \begin{itemize} 
  \item A family of subsets $\O_j$ of $\R^n$ which will be referred to as \emph{cells}. Each cell $O_j \in \O_j$ will have diameter at most $\rho_j$. 
\item A collection of functions $(f_{O_j})_{O_j \in \O_j}$. Each $f_{O_j}$ is concentrated on wave packets in $\T[\rho_j]$ which are $\rho_j^{-1/2+\delta_m}$-tangent to some translate of $\bZ$ on (a ball of radius $\rho_j$ containing) $O_j$. 
  \item A large integer $d \in \N$ which depends only on the admissible parameters and $\Deg \bZ$.
  \end{itemize}
  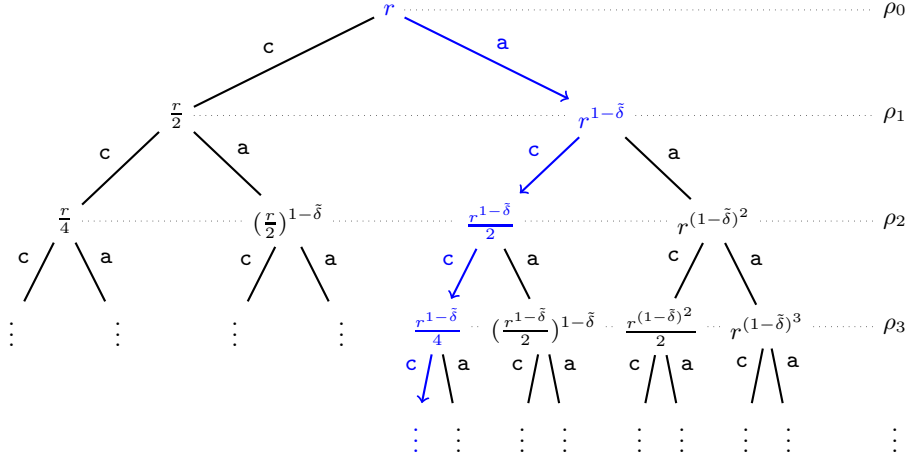
\begin{figure}
 \small
 \begin{center}
    \begin{tikzpicture}[scale=0.7, thick, level 1/.style={sibling distance=80mm},
      level 2/.style={sibling distance=42mm},
      level 3/.style={sibling distance=20mm},
       level 4/.style={sibling distance=8mm}, level distance=20mm]
      
      \node (root) {${\color{blue}r}$}
        child { node (c) {$\frac{r}{2}$}
          child { 
            node (cc) {$\frac{r}{4}$}
             child {node {$\vdots$} edge from parent
              node[above left] {\texttt{c}}}
             child {node {$\vdots$} edge from parent
              node[above right] {\texttt{a}}}
            edge from parent
              node[above left] {\texttt{c}}
              }
          child { 
            node (ca)  {$(\frac{r}{2})^{1-\tilde{\delta}}$}
            child {node {$\vdots$} edge from parent
              node[above left] {\texttt{c}}}
             child {node {$\vdots$} edge from parent
              node[above right] {\texttt{a}}}
            edge from parent
              node[above right] {\texttt{a}}}
          edge from parent
            node[above left] {\texttt{c}}}
        child { node (a) {${\color{blue}r^{1-\tilde{\delta}}}$}
          child { 
            node (ac) {${\color{blue}\frac{r^{1-\tilde{\delta}}}{2}}$}
              child {
                node (acc) {${\color{blue}\frac{r^{1-\tilde{\delta}}}{4}}$}
                child {node {${\color{blue}\vdots}$} edge from parent[blue, ->]
              node[above left] {\texttt{c}}}
             child {node {$\vdots$}edge from parent[black]
              node[above right] {\texttt{a}}}
                edge from parent[blue, ->]
                  node[above left] {\texttt{c}}}
              child {
                 node (aca) {$(\frac{r^{1-\tilde{\delta}}}{2})^{1-\tilde{\delta}}$}
                 child {node {$\vdots$} edge from parent
              node[above left] {\texttt{c}}}
             child {node {$\vdots$} edge from parent
              node[above right] {\texttt{a}}}
                 edge from parent[black]
                   node[above right] {\texttt{a}}}
              edge from parent[blue, ->]
                node[above left] {\texttt{c}}}
          child { 
            node (aa) {$r^{(1-\tilde{\delta})^2}$ }
              child {
                node (aac) {$\frac{r^{(1-\tilde{\delta})^2}}{2}$}
                child {node {$\vdots$} edge from parent
              node[above left] {\texttt{c}}}
             child {node {$\vdots$} edge from parent
              node[above right] {\texttt{a}}}
                edge from parent
                  node[above left] {\texttt{c}}}
              child {
                 node (aaa){$r^{(1-\tilde{\delta})^3}$}
                 child {node {$\vdots$} edge from parent
              node[above left] {\texttt{c}}}
                 child {node {$\vdots$}
                 child [grow=right] {node {$\vdots$} edge from parent[draw=none]
                 child [grow=up] {node (r3)  {$\rho_3$}  edge from parent[draw=none]
                 child [grow=up] {node (r2) {$\rho_2$}edge from parent[draw=none]
                 child [grow=up] {node (r1) {$\rho_1$} edge from parent[draw=none]
                 child [grow=up] {node (r0) {$\rho_0$} edge from parent[draw=none]}} }}}
                 edge from parent node [above right] {\texttt{a}} 
              }
                 edge from parent[black]
                   node[above right] {\texttt{a}}}
              edge from parent[black]
                node[above right] {\texttt{a}}}
           edge from parent[blue, ->]
             node[above right] {\texttt{a}}};
             
             \draw[very thin, dotted](root)->(r0);
             \draw[very thin, dotted](c)->(a);
             \draw[very thin, dotted](a)->(r1);
             \draw[very thin, dotted](cc)->(ca);
             \draw[very thin, dotted](ca)->(ac);
             \draw[very thin, dotted](ac)->(aa);
             \draw[very thin, dotted](aa)->(r2);
             \draw[very thin, dotted](acc)->(aca);
             \draw[very thin, dotted](aca)->(aac);
             \draw[very thin, dotted](aac)->(aaa);
             \draw[very thin, dotted](aaa)->(r3);

    \end{tikzpicture}
    \end{center} 
    \caption{The chain of scales $\rho_j$. Here we drop the subscript by writing $\tilde{\delta} := \tilde{\delta}_{m-1}$. The {\color{blue}blue} path marked by arrows corresponds to the situation where the word $\mathfrak{h}_j$ of length $j$ begins with the letters $\mathrm{\texttt{accc}}\dots$ for $4\leq j \leq J$. In this case the sequence $\rho_j$ is given by $\rho_0 = r$, $\rho_1 = r^{1-\tilde{\delta}}$, $\rho_2 = r^{1-\tilde{\delta}}/2$, $\rho_3 = r^{1-\tilde{\delta}}/4$, $\dots$.}
    \label{scales figure}
  \end{figure}

Moreover, the components of the ensemble are defined so as to ensure that, for certain coefficients\footnote{The quantity $d^{\#_{\stc}(j)\delta}$ may be large (and non-admissible). Nevertheless, these $d^{\#_{\stc}(j)\delta}$ losses will be compensated for by other gains in the argument: see Remark 10.1 below.}
\begin{equation}\label{small coefficients}
   C^{\mathrm{I}}_{j,\delta}(d,r), \; C^{\mathrm{II}}_{j,\delta}(d), \; C^{\mathrm{III}}_{j,\delta}(d,r) \lesssim_{d,\delta} r^{\delta_0} d^{\#_{\stc}(j)\delta}
\end{equation}
and $A_j := 2^{-\#_{\sta}(j)}A \in \N$, the following properties hold:\\

\paragraph{\underline{Property I}} Most of the mass of $\|Ef\|_{\BL{k,A}^p(B_{r})}^p$ is concentrated over the $O_j \in \O_j$:
\begin{equation} \tag*{$(\mathrm{I})_j$}
      \|Ef\|_{\BL{k,A}^p(B_{r})}^p \leq C^{\mathrm{I}}_{j,\delta}(d,r) \sum_{O_{j} \in \O_{j}} \|Ef_{O_j}\|_{\BL{k,A_j}^p(O_{j})}^p + \mathrm{err}(j)
\end{equation}
where $\mathrm{err}(j) := j r^{-N} \|f\|_{2}^p$ for some fixed $N \in \N$ is a harmless `error' term.\\

\paragraph{\underline{Property II}} The functions $f_{O_j}$ satisfy
\begin{equation}\tag*{$(\mathrm{II})_j$}
    \sum_{O_{j} \in \O_{j}} \|f_{O_j}\|_{2}^2 \leq C^{\mathrm{II}}_{j,\delta}(d)d^{\#_{\stc}(j)}  \|f\|_{2}^2.
\end{equation}
\paragraph{\underline{Property III}} Furthermore, each individual $f_{O_j}$ satisfies
\begin{equation}\tag*{$(\mathrm{III})_j$}
    \|f_{O_{j}}\|_{2}^2 \leq C_{j,\delta}^{\mathrm{III}}(d, r) \Big(\frac{r}{\rho_{j}}\Big)^{-\frac{n-m}{2}} d^{-\#_{\stc}(j)(m-1)}  \|f\|_{2}^2
\end{equation}
 and 
\begin{equation}\tag*{$(\mathrm{III}_{\mathrm{loc}})_j$}
 \max_{\theta : \rho^{-1/2}-\mathrm{cap}} \|f_{O_{j}}\|_{L_{\mathrm{avg}}^2(\theta)}^2 \leq C_{j,\delta}^{\mathrm{III}}(d, r) \Big(\frac{r}{\rho_{j}}\Big)^{-\frac{n-m}{2}}\!\!\!\!\!\max_{\theta : \rho^{-1/2}-\mathrm{cap}}\|f\|_{L_{\mathrm{avg}}^2(\theta)}^2
\end{equation}
for all $1 \leq \rho \leq \rho_j$.\\

The factors $C^{\mathrm{I}}_{j,\delta}(d,r)$, $C^{\mathrm{II}}_{j,\delta}(d)$ and $C^{\mathrm{III}}_{j,\delta}(d,r)$ play a minor technical role in the analysis but, nevertheless, it is useful to work with explicit formul\ae\, for these coefficients. In particular, they are defined by
\begin{align*}
C_{j,\delta}^{\mathrm{I}}(d, r) &:= d^{\#_{\stc}(j)\delta} (\log r)^{2\#_{\sta}(j)(1 + \delta)}, \\
C^{\mathrm{II}}_{j,\delta}(d) &:= d^{\#_{\stc}(j)\delta+n\#_{\sta}(j)(1+\delta)}, \\
C_{j,\delta}^{\mathrm{III}}(d, r) &:=  d^{\#_{\stc}(j)\delta+\#_{\sta}(j)\delta}r^{\bar{C}\#_{\sta}(j)\delta_m},
\end{align*}
where  $\bar{C}$ is some suitably chosen large constant. 

%
%
%

\subsection*{The initial step}

The initial ensemble $\sE_0$ is defined by taking:
\begin{itemize}
    \item $\fh := \emptyset$ to be the empty word;
    \item $\rho_0 := r$;
    \item $\O_0$ the collection consisting of a single cell $O_0 := N_{r^{1/2+\delta_m}}\bZ \cap B_{r}$;
    \item $f_{O_0} := f$.
\end{itemize} 
At this point it is convenient also to fix $d \in \N$ to be some large integer, to be determined later, which depends only on admissible parameters and $\Deg\bZ$. 

With these definitions, Property I holds due to the hypothesis on $f$ and the spatial concentration property of the wave packets, whilst Properties II and III both hold vacuously.

%
%
%

\subsection*{The recursive step} Assume the ensembles $\sE_0, \dots, \sE_j$ have all been constructed for some $j \in \N_0$ and that they all satisfy the desired properties. \\

\paragraph{\underline{\texttt{Stopping conditions}}} The algorithm has two stopping conditions which are labelled \texttt{[tiny]} and \texttt{[tang]}. 
\begin{itemize}
\item[\texttt{Stop:[tiny]}] The algorithm terminates if $\rho_j \leq r^{\tilde{\delta}_{m-1}}$.
\end{itemize}

\begin{itemize}
\item[\texttt{Stop:[tang]}]Let $C_{\textrm{\texttt{tang}}}$ and $C_{\alg}$ be fixed admissible constants, chosen large enough to satisfy the forthcoming requirements of the proof, and $\tilde{\rho} := \rho_j^{1 - \tilde{\delta}_m}$. The algorithm terminates if the inequalities
\begin{equation*}
    \sum_{O_j \in \O_j} \|Ef_{O_j}\|_{\BL{k,A_j}^p(O_{j})}^p \leq C_{\textrm{\texttt{tang}}}  \sum_{S \in \Sc} \|Ef_S\|_{\BL{k,A_j/2}^p(B_{\tilde{\rho}}[S])}^p
\end{equation*}
and
\begin{align}\label{tangent stopping conditions}
   \sum_{S \in \Sc}\|f_{S}\|_2^2  &\leq C_{\textrm{\texttt{tang}}}r^{n\tilde{\delta}_m}\sum_{O_j \in \O_j}\|f_{O_j}\|_2^2; \\ 
  \nonumber
  \max_{\substack{S \in \Sc\\\theta : \rho^{-1/2}-\mathrm{cap}}}\|f_{S}\|_{L^2_{\mathrm{avg}}(\theta)}^2 &\leq C_{\textrm{\texttt{tang}}}\!\!\! \max_{\substack{O_{j} \in \O_{j} \\\theta : \rho^{-1/2}-\mathrm{cap}}}\|f_{O_{j}}\|_{L^2_{\mathrm{avg}}(\theta)}^2
\end{align}
hold for all $1 \leq \rho \leq \tilde{\rho}$ for some choice of:
\end{itemize}
\begin{itemize}
    \item $\Sc$ a collection of transverse complete intersections in $\R^n$ all of equal dimension $m-1$ and degree at most $C_{\alg}d$;
    \item $B_{\tilde{\rho}}[S]$ an assignment of a $\tilde{\rho}$-ball to each $S \in \Sc$;
    \item $f_S$ an assignment of a function to each $S \in \Sc$ which is concentrated on wave packets $\tilde{\rho}^{-1/2+\delta_{m-1}}$-tangent to $S$ on $B_{\tilde{\rho}}$ in the sense of Definition~\ref{tangent definition}.
\end{itemize}

The stopping condition \texttt{[tang]} is somewhat involved, but it can be roughly interpreted as forcing the algorithm to terminate if one can pass to a lower dimensional situation.

If either of the above conditions hold, then the stopping time is defined to be $J := j$. Recalling \eqref{radius bounds}, the stopping condition \texttt{[tiny]} implies that the algorithm must terminate after finitely many steps and, moreover, 
\begin{equation}\label{letter count}
   \#_{\bta}(J) \lesssim \delta_{m-1}^{-1} \log(\delta_{m-1}^{-1}) \quad \textrm{and} \quad  \#_{\btc}(J) \lesssim \log r.
\end{equation}
These estimates can be combined with the explicit formul\ae\, for $C_{j,\delta}^{\mathrm{I}}(d, r)$, $C_{j,\delta}^{\mathrm{II}}(d)$ and $C_{j,\delta}^{\mathrm{III}}(d, r)$ to show that the bound \eqref{small coefficients} always holds, provided $\delta_m$ is chosen to be sufficiently small relative to $\delta_{m-1}$. Furthermore, by choosing $A \geq 2^{\delta^{-2}}$, say, one can ensure that the $A_j$ defined above are indeed integers.\\

\paragraph{\underline{\texttt{Recursive step}}}

Suppose that neither stopping condition \texttt{[tiny]} nor \texttt{[tang]} is met. One proceeds to construct the ensemble $\sE_{j+1}$ as follows. 

Given $O_j \in \O_j$, apply the polynomial partitioning theorem, Theorem \ref{partitioning theorem},  with degree $d$ to 
\begin{equation*}
    \|Ef_{O_j}\|_{\BL{k,A_j}^p(O_j\cap N_{\!\!\rho_j^{1/2 + \delta_m}}(\bZ + x_{O_j}))}^p = \|Ef_{O_j}\|_{\BL{k,A_j}^p(O_j)}^p + \Dec^p,
\end{equation*}
where $x_{O_j} \in \R^n$ is a choice of translate such that $f_{O_j}$ is concentrated on wave packets $\rho_j^{-1/2+\delta_m}$-tangent to $\bZ + x_{O_j}$ in (a $\rho_j$-ball containing) $O_j$. For each $O_j \in \O_j$ either the cellular or the algebraic case holds, as defined in Theorem~\ref{partitioning theorem}. Let $\O_{j,\cell}$ denote the subcollection of $\O_j$ consisting of all cells for which the cellular case holds and $\O_{j,\alg} := \O_j \setminus \O_{j,\cell}$. Thus, by $(\mathrm{I})_j$, one may bound $ \|Ef\|_{\BL{k,A}^p(B_{r})}^p$ by
\begin{equation*}
    C_{j,\delta}^{\mathrm{I}}(d, r)  \Big[ \sum_{O_j \in \O_{j,\cell}} \|Ef_{O_j}\|_{\BL{k,A_j}^p(O_j)}^p + \sum_{O_j \in \O_{j,\alg}} \|Ef_{O_j}\|_{\BL{k,A_j}^p(O_j)}^p \Big] + \mathrm{err}(j);
\end{equation*}
the analysis splits into two cases depending on which term in this sum dominates.




\subsection*{$\blacktriangleright$ Cellular-dominant case} Suppose that the inequality
\begin{equation}\label{cellular dominant comparison}
     \sum_{O_j \in \O_{j,\alg}} \|Ef_{O_j}\|_{\BL{k,A_j}^p(O_j)}^p \leq \sum_{O_j \in \O_{j,\cell}} \|Ef_{O_j}\|_{\BL{k,A_j}^p(O_j)}^p 
\end{equation}
holds so that
\begin{equation}\label{cellular dominant case}
   \|Ef\|_{\BL{k,A}^p(B_{r})}^p \leq 2C_{j,\delta}^{\mathrm{I}}(d, r)  \sum_{O_j \in \O_{j,\cell}} \|Ef_{O_j}\|_{\BL{k,A_j}^p(O_j)}^p + \mathrm{err}(j).
\end{equation} 

\paragraph{\underline{Definition of $\sE_{j+1}$}} Define $\fh_{j+1}$ by adjoining the letter $\tc$ to the word $\fh_j$. Thus, it follows from the definitions that 
\begin{equation}\label{cellular word}
    \rho_{j+1} =  \frac{\rho_j}{2}, \quad \#_{\btc}(j+1) = \#_{\btc}(j)+1 \quad \textrm{and} \quad \#_{\bta}(j+1) = \#_{\bta}(j).
\end{equation}

The next generation of cells $\O_{j+1}$ will arise from the cellular decomposition of Theorem~\ref{partitioning theorem}. Fix $O_j \in \O_{j, \cell}$ so that there exists some polynomial $P \colon \R^n \to \R$ of degree $O(d)$ with the following properties:
\begin{enumerate}[i)]
    \item $\#\cell(P) \sim d^m$ and each $O \in \cell(P)$ has diameter at most $\rho_{j+1}$. 
    \item One may pass to a refinement of $\cell(P)$ such that if 
    \begin{equation}\label{shrunken cells recall}
        \O_{j+1}(O_j) := \Big\{\, O\setminus N_{\!\!\rho_{j}^{1/2 + \delta_m}}Z(P)\, :\, O \in \cell(P)\,\Big\}
    \end{equation}
    denotes the corresponding collection of $\rho_{j}^{1/2 + \delta_m}$-shrunken cells, then 
    \begin{equation*}
    \|Ef_{O_j}\|_{\BL{k,A_j}^p(O_{j})}^p \lesssim d^{m}\|Ef_{O_j}\|_{\BL{k,A_j}^p(O_{j+1})}^p \qquad \textrm{for all $O_{j+1} \in \O_{j+1}(O_j)$.}
\end{equation*}
    \end{enumerate}
 Given $O_{j+1} \in \O_{j+1}(O_j)$, define
 \begin{equation*}
 f_{O_{j+1}} := \sum_{\substack{(\theta,v) \in \T[\rho_j] \\ T_{\theta,v} \cap O_{j+1} \neq \emptyset}} (f_{O_j})_{\theta,v}.
\end{equation*}
It is a simple consequence of the fundamental theorem of algebra (or B\'ezout's theorem) that any tube $T_{\theta,v}$ for $(\theta,v) \in \T[\rho_j]$ can enter at most $O(d)$ cells $O_{j+1} \in \O_{j+1}(O_j)$ (it is for this reason that one works with the collection of \emph{shrunken} cells as defined in \eqref{shrunken cells recall}). Consequently, by the basic orthogonality between the wave packets,
\begin{equation}\label{cellular 1}
    \sum_{O_{j+1} \in \O_{j+1}(O_j)} \|f_{O_{j+1}} \|_{2}^2 \lesssim d \|f_{O_j} \|_{2}^2. 
\end{equation}
By the pigeonhole principle, one may therefore pass to a refinement of $\O_{j+1}(O_j)$ such that
 \begin{equation}\label{cellular 2}
    \|f_{O_{j+1}} \|_{2}^2 \lesssim d^{-(m-1)} \|f_{O_j} \|_{2}^2 \qquad \textrm{for all $O_{j+1} \in \O_{j+1}(O_j)$.}
\end{equation}
 Finally, define 
 \begin{equation*}
 \O_{j+1} := \bigcup_{O_j \in \O_{j, \cell}} \O_{j+1}(O_j).
\end{equation*}
This completes the construction of $\sE_{j+1}$ and 
it remains to check that the new ensemble satisfies the desired properties.\footnote{There is a slight technical issue here as the $f_{O_{j+1}}$ are required to satisfy the tangency hypothesis at scale $\rho_{j+1}$; this is not quite directly inherited from the parent $f_{O_j}$ functions since they only satisfy a tangency hypothesis at scale  $\rho_{j}$. Although the scales differ by only a factor of 2, the construction is applied repeatedly as part of the recursive process and therefore such factors can build up and potentially threaten the argument. One may deal with this problem by performing a further decomposition of the cells $O_{j+1}$ and functions $f_{O_{j+1}}$ using Proposition~\ref{random cover proposition}: the details are omitted since the argument is similar (but significantly simpler) to that used to treat the algebraic case below. See also Lemma 10.2 of~\cite{GHI}.} In view of this, it is useful to note that
\begin{equation}\label{cellular coefficients}
C^{N}_{j, \delta}(d,r) = d^{-\delta} C^{N}_{j+1,\delta}(d, r)\quad \textrm{for $N \in \{\mathrm{I},\mathrm{II},\mathrm{III}\}$} \quad \textrm{and}\quad A_j = A_{j+1},
\end{equation}
which follows immediately from \eqref{cellular word} and the definition of the $C^{N}_{j, \delta}(d,r)$ and $A_j$.\\



%
%
%
\paragraph{\underline{Property I}} Fixing $O_j \in \O_{j,\cell}$, observe that $\#\O_{j+1}(O_j) \sim d^m$ and
\begin{equation*}
    \|Ef_{O_j}\|_{\BL{k,A_j}^p(O_{j})}^p  \lesssim d^{m}\|Ef_{O_j}\|_{\BL{k,A_j}^p(O_{j+1})}^p \qquad \textrm{for all $O_{j+1} \in \O_{j+1}(O_j)$}
\end{equation*}
by the properties i) and ii) from the polynomial partitioning theorem and the fact that $\O_{j+1}(O_j)$ is obtained by twice refining a set of cardinality comparable to that of $\cell(P)$. Thus,
\begin{equation*}
    \|Ef_{O_j}\|_{\BL{k,A_j}^p(O_j)}^p \lesssim \sum_{O_{j+1} \in \O_{j+1}(O_j)}  \|Ef_{O_j}\|_{\BL{k,A_j}^p(O_{j+1})}^p
\end{equation*}
and, recalling \eqref{cellular dominant case} and \eqref{cellular coefficients}, one deduces that
\begin{equation*}
    \|Ef\|_{\BL{k,A}^p(B_{r})}^p \leq C d^{-\delta} C^{\mathrm{I}}_{j+1,\delta}(d, r) \sum_{O_{j+1} \in \O_{j+1}}  \|Ef_{O_j}\|_{\BL{k,A_{j+1}}^p(O_{j+1})}^p  + \mathrm{err}(j).
\end{equation*}
By the definition of $f_{O_{j+1}}$ and the spatial concentration property of the wave packets, it follows that
\begin{equation*}
    Ef_{O_j}(x) = Ef_{O_{j+1}}(x) +  \Dec \qquad \textrm{for all $x \in O_{j+1}$}.
\end{equation*}
This inequality relies on the fact that $\rho_{j+1} \gtrsim r^{\delta}$, which is valid since it is assumed that the stopping condition \texttt{[tiny]} fails. If $r$ is sufficiently large, then one concludes that
\begin{equation*}
    \|Ef\|_{\BL{k,A}^p(B_{r})}^p \leq C d^{-\delta} C^{\mathrm{I}}_{j+1,\delta}(d, r) \!\!\! \sum_{O_{j+1} \in \O_{j+1}}  \|Ef_{O_{j+1}}\|_{\BL{k,A_{j+1}}^p(O_{j+1})}^p + \mathrm{err}(j+1).
\end{equation*}
Thus, provided $d$ is chosen large enough so as to ensure that the additional $d^{-\delta}$ factor absorbs the unwanted constant $C$, one deduces $(\mathrm{I})_{j+1}$.  This should be compared with Solymosi and Tao's approach to polynomial partitioning~\cite{ST}.
\\

%
%
%
\paragraph{\underline{Property II}} By the construction, 
\begin{align*}
    \sum_{O_{j+1} \in \O_{j+1}} \| f_{O_{j+1}}\|_{2}^2 &= \sum_{O_{j} \in \O_{j,\mathrm{cell}}} \sum_{O_{j+1} \in \O_{j+1}(O_j)} \| f_{O_{j+1}}\|_{2}^2  \\
    &\lesssim d \sum_{O_j \in \O_j} \| f_{O_j}\|_{2}^2, 
\end{align*}
where the inequality follows from a term-wise application of \eqref{cellular 1}. Thus, $(\mathrm{II})_j$ and~\eqref{cellular coefficients} imply that
\begin{equation*}
    \sum_{O_{j+1} \in \O_{j+1}} \| f_{O_{j+1}}\|_{2}^2 \lesssim d^{-\delta} C^{\mathrm{II}}_{j+1,\delta}(d)  d^{\#_{\stc}(j+1)} \| f\|_{2}^2 
\end{equation*}
and, provided $d$ is chosen sufficiently large, one deduces $(\mathrm{II})_{j+1}$.\\

%
%
%
\paragraph{\underline{Property III}} Fix $O_j \in \O_{j, \mathrm{cell}}$, $O_{j+1} \in \O_{j+1}(O_j)$ and recall from \eqref{cellular 2} that
\begin{equation}\label{gain in d}
    \|f_{O_{j+1}}\|_{2}^2 \lesssim d^{-(m-1)}\|f_{O_{j}}\|_{2}^2.
\end{equation}
Thus, $(\mathrm{III})_j$ and \eqref{cellular coefficients} imply that
\begin{equation*}
    \|f_{O_{j+1}}\|_{2}^2 \lesssim d^{-\delta} C^{\mathrm{III}}_{j+1,\delta}(d, r) \Big(\frac{r}{\rho_j}\Big)^{-\frac{n-m}{2}} d^{-(\#_{\stc}(j)+1)(m-1)}  \|f\|_{2}^2.
\end{equation*}
Since $\rho_j \sim \rho_{j+1}$ and $\#_{\stc}(j) + 1 = \#_{\stc}(j+1)$, provided $d$ is chosen sufficiently large, one deduces $(\mathrm{III})_{j+1}$. 

The local inequality $(\mathrm{III}_{\mathrm{loc}})_{j+1}$ follows in a similar manner but with one key difference: the inequality \eqref{gain in d} is no longer available due to the localisation in the $L^2$-norms. Instead, one uses simple orthogonality between the wave packets to prove that
\begin{equation*}
    \max_{\theta : \rho^{-1/2}-\mathrm{cap}}\|f_{O_{j+1}}\|_{L^2_{\mathrm{avg}}(\theta)}^2 \lesssim \max_{\theta : \rho^{-1/2}-\mathrm{cap}}\|f_{O_{j}}\|_{L^2_{\mathrm{avg}}(\theta)}^2 
\end{equation*}
for $1 \le \rho \leq \rho_{j+1} \leq \rho_j$.




\subsection*{$\blacktriangleright$ Algebraic-dominant case} Suppose that the hypothesis \eqref{cellular dominant comparison} of the cellular-dominant case fails so that
\begin{equation}\label{algebraic dominant comparison}
     \sum_{O_j \in \O_{j,\cell}} \|Ef_{O_j}\|_{\BL{k,A_j}^p(O_j)}^p  < \sum_{O_j \in \O_{j,\alg}} \|Ef_{O_j}\|_{\BL{k,A_j}^p(O_j)}^p
\end{equation}
and, consequently,
\begin{equation}\label{algebraic dominant case}
   \|Ef\|_{\BL{k,A}^p(B_{r})}^p \leq 2C_{j,\delta}^{\mathrm{I}}(d, r)  \sum_{O_j \in \O_{j,\alg}} \|Ef_{O_j}\|_{\BL{k,A_j}^p(O_j)}^p + \mathrm{err}(j).
\end{equation} 
Each cell in $\O_{j,\alg}$ satisfies the condition of the algebraic case of Theorem~\ref{partitioning theorem}; this information is used to construct the $(j+1)$-generation ensemble. \\

\paragraph{\underline{Definition of $\sE_{j+1}$}} Define $\fh_{j+1}$ by adjoining the letter $\ta$ to the word $\fh_j$. Thus, it follows from the definitions that 
\begin{equation*}
    \rho_{j+1} =  \rho_j^{1-\tilde{\delta}_{m-1}}, \quad \#_{\btc}(j+1) = \#_{\btc}(j) \quad \textrm{and} \quad \#_{\bta}(j+1) = \#_{\bta}(j)+1.
\end{equation*} 
 The next generation of cells is constructed from the varieties  which arise from the algebraic case in Theorem~\ref{partitioning theorem}. Fix $O_j \in \O_{j,\alg}$ so that there exists a transverse complete intersection $\bY$ of dimension $m-1$ and $\Deg \bY \leq C_{\mathrm{alg}} d$ such that
\begin{equation*}
    \|Ef_{O_j}\|_{\BL{k,A_j}^p(O_j)}^p \lesssim \|Ef_{O_j}\|_{\BL{k,A_{j}}^p(O_j \cap N_{\!\!\rho_j^{1/2 + \delta_m}}\bY)}^p.
\end{equation*}
Let $\cB(O_j)$ be a  cover of $O_j \cap N_{\!\!\rho_j^{1/2 + \delta_m}}\bY$ by finitely-overlapping balls of radius $\rho_{j+1}$. For each $B \in \cB(O_j)$ let $\T_{B}$ denote the collection of all $(\theta,v) \in \T[\rho_j]$ for which $T_{\theta,v} \cap B \cap N_{\rho_j^{1/2 + \delta_m}}\bY  \neq \emptyset$. This set is partitioned into subsets $\T_{B,\tang}$ and $\T_{B, \trans}$ consisting of wave packets in $\T_{B}$ which are tangential and transverse to $\bY$ on $B$, respectively, by defining 
\begin{equation*}
    \T_{B, \tang} := \Big\{\, (\theta,v) \in \T_{B}\, :\, \textrm{$T_{\theta,v}$ is $\rho_{j+1}^{-1/2+\delta_{m-1}}$-tangent to $\bY$ on $B$}\, \Big\}
\end{equation*}
and $\T_{B, \trans} := \T_{B} \setminus \T_{B, \tang}$. This setup is slightly inconsistent with the definition of tangent from Definition~\ref{tangent definition} (since the wave packets in $\T_B$ are at the large scale~$\rho_j$ rather than $\rho_{j+1}$) and therefore some clarification is necessary. 

\begin{definition}\label{new tangent definition} In this context, the tangency condition means that the following conditions hold:
\begin{enumerate}[i)]
\item $T_{\theta,v} \cap 2B \subseteq N_{2\rho_{j+1}^{1/2+\delta_{m-1}}}\bY  = N_{2\rho_{j}^{1/2+\delta_{m}}}\bY$ ;
\item If $x \in T_{\theta,v}$ and $y \in \bY \cap 2B$ satisfy $|y-x| \lesssim \rho_{j+1}^{1/2 + \delta_{m-1}}= \rho_{j}^{1/2 + \delta_m}$, then
\begin{equation*}
    \angle(G(\theta), T_y\bY) \lesssim \rho_{j+1}^{-1/2+\delta_{m-1}}. 
\end{equation*}
\end{enumerate}

\end{definition}

By the basic concentration property of the wave packets, one may decompose the function $Ef_{O_j}$ on $B$ as
\begin{equation*}
    Ef_{O_j}(x) = Ef_{B, \trans}(x) + Ef_{B, \tang}(x) + \Dec \qquad \textrm{for all $x \in B$}
\end{equation*}
where 
\begin{equation*}
    f_{B, \tang} := \sum_{(\theta,v) \in \T_{B, \tang}} (f_{O_j})_{\theta,v} \qquad \textrm{and} \qquad  f_{B, \trans} := \sum_{(\theta,v) \in \T_{B, \trans}} (f_{O_j})_{\theta,v}.
\end{equation*}

 The functions $f_{B,\tang}$ are in fact concentrated on scale $\rho_{j+1}$ wave packets which are $\rho_{j+1}^{-1/2+\delta_{m-1}}$-tangent to $\bY$ in $B$ in precisely the sense of Definition~\ref{tangent definition}. This can be seen by a direct application of Lemma~\ref{relation between scales lemma}. In addition, by the local version of the basic orthogonality between wave packets,
 \begin{equation*}
     \max_{\theta : \rho^{-1/2}-\mathrm{cap}}\|f_{B, \tang}\|_{L^2_{\mathrm{avg}}(\theta)}^2 \lesssim \max_{\theta : \rho^{-1/2}-\mathrm{cap}}\|f_{O_j}\|_{L^2_{\mathrm{avg}}(\theta)}^2 
 \end{equation*}
 whenever $B \in \cB(O_j)$ and $1 \leq \rho \leq \rho_{j}$. Provided that the constant $C_{\mathrm{tang}}$ is suitably chosen, these observations imply that the functions $f_{B,\tang}$ satisfy the conditions stated in \eqref{tangent stopping conditions} from the definition of the stopping condition \texttt{[tang]}. 
 
 By hypothesis, \texttt{[tang]} fails and, consequently, one may deduce that
\begin{equation}\label{algebraic 1}
    \sum_{O_j \in \O_{j,\mathrm{alg}}}\|Ef_{O_j}\|_{\BL{k,A_j}^p(O_j)}^p \lesssim  \sum_{O_j \in \O_{j,\mathrm{alg}}} \sum_{B \in \cB(O_j)}\|Ef_{B, \trans}\|_{\BL{k,A_{j+1}}^p(B)}^p,
\end{equation}
where this inequality holds up to the inclusion of a rapidly decaying error term on the right-hand side.
Indeed, by the triangle inequality for broad norms (Lemma~\ref{triangle inequality lemma}) and since $A_{j+1} = A_j/2$, one may dominate the left-hand side of \eqref{algebraic 1} by
\begin{equation*}
 \sum_{O_j \in \O_{j,\mathrm{alg}}}\sum_{B \in \cB(O_j)} \Big[\|Ef_{B, \tang}\|_{\BL{k,A_{j+1}}^p(B)}^p + \|Ef_{B, \trans}\|_{\BL{k,A_{j+1}}^p(B)}^p\Big] +\Dec^p.
\end{equation*}
 By the preceding observations, the failure of the stopping condition \texttt{[tang]} forces
\begin{equation*}
    \sum_{O_j \in \O_{j,\mathrm{alg}}}\sum_{B \in \cB(O_j)} \|Ef_{B, \tang}\|_{\BL{k,A_{j+1}}^p(B)}^p < \frac{1}{C_{\mathrm{tang}}}  \sum_{O_j \in \O_j}\|Ef_{O_j}\|_{\BL{k,A_j}^p(O_j)}^p 
\end{equation*}
(since it has been shown that all other conditions for \texttt{[tang]} are met). Recalling~\eqref{algebraic dominant comparison}, for a suitable choice of constant $C_{\mathrm{tang}}$, this implies \eqref{algebraic 1}.

The functions $f_{B, \trans}$ and sets $B$ are further decomposed so as to ensure favourable tangency properties with respect to translates of the variety $\bZ$ at the new scale~$\rho_{j+1}$. Let $\bZ_B := \bZ + x_{O_j} - x_B$ where $x_B$ denotes the centre of $B \in \cB(O_j)$. Proposition~\ref{random cover proposition} implies that for each $B \in \cB(O_j)$ there exists a finite set of translates $\fB \subseteq B(0,\rho_j^{1/2 + \delta_m})$ such that
\begin{equation}\label{algebraic 2}
    \|Ef_{B, \trans}\|_{\BL{k,A_{j+1}}^p(B)}^p \lesssim \log^2r  \sum_{B \in \fB} \|E\tilde{f}_{B, \trans,b}\|_{\BL{k,A_{j+1}}^p(B(0,\rho_{j+1}) \cap N_{\!\!\rho_{j+1}^{1/2 + \delta_m}} (\bZ_B + b))}^p
\end{equation}
holds up to the inclusion of a rapidly decaying error term, whilst
\begin{equation}\label{algebraic 3}
    \sum_{b \in \fB} \|\tilde{f}_{B, \trans,b}\|_{2}^2 \lesssim \|f_{B, \trans}\|_{2}^2.
\end{equation}
Here the functions $\tilde{f}_{B, \trans, b}$ are defined as in Section~\ref{comparing wave packet decompositions section}. In particular, each $\tilde{f}_{B, \trans, b}$ is concentrated on wave packets which are $\rho_{j+1}^{-1/2+\delta_m}$-tangent to $\bZ_B + b$ in $B(0, \rho_{j+1})$. Finally, define
\begin{equation*}
    \O_{j+1}(O_j) := \Big\{\, B \cap N_{\!\!\rho_{j+1}^{1/2 + \delta_m}} (\bZ + x_{O_j} + b)\, :\, B \in \cB(O_j) \textrm{ and } b \in \fB \,\Big\}
\end{equation*} 
and for any $O_{j+1} = B \cap N_{\!\!\rho_{j+1}^{1/2 + \delta_m}} (\bZ + x_{O_j} + b) \in  \O_{j+1}(O_j)$ let $f_{O_{j+1}}$ satisfy $\tilde{f}_{O_{j+1}} := \tilde{f}_{B, \trans, b}$; once again, we are using the definition of the map $g \mapsto \tilde{g}$ from Section~\ref{comparing wave packet decompositions section}. Thus, each $f_{O_{j+1}}$ is concentrated on wave packets which are $\rho_{j+1}^{-1/2+\delta_m}$-tangent to $\bZ + x_{O_{j+1}}$ in $B \in \cB(O_j)$, where $x_{O_{j+1}}=x_{O_{j}}+b$. The collection of cells $\O_{j+1}$ is then given by
\begin{equation*}
    \O_{j+1} := \bigcup_{O_j \in \O_{j, \alg}} \O_{j+1}(O_j). 
\end{equation*}
It remains to verify that the ensemble $\sE_{j+1}$ satisfies the desired properties. In view of this, it is useful to note that
\begin{align}
\nonumber
C^{\mathrm{I}}_{j, \delta}(d,r) &= (\log r)^{-2(1+\delta)}C^{\mathrm{I}}_{j+1,\delta}(d, r), \\
\label{algebriac coefficients}
C^{\mathrm{II}}_{j, \delta}(d) &= d^{-n(1+\delta)}C^{\mathrm{II}}_{j+1,\delta}(d),\\
\nonumber
C^{\mathrm{III}}_{j, \delta}(d,r) &= r^{-\bar{C}\delta_m} d^{-\delta}C^{\mathrm{III}}_{j+1,\delta}(d, r),
\end{align}
which can be verified directly from the definitions.\\

%
%
%
\paragraph{\underline{Property I}} By combining \eqref{algebraic 1} and \eqref{algebraic 2} together with the various definitions one obtains 
\begin{equation*}
  \sum_{O_{j} \in \O_{j,\mathrm{alg}}} \|Ef_{O_j}\|_{\BL{k,A_j}^p(O_{j})}^p \lesssim \log^2 r\sum_{O_{j+1} \in \O_{j+1}} \|Ef_{O_{j+1}}\|_{\BL{k,A_{j+1}}^p(O_{j+1})}^p,
\end{equation*}
where this inequality holds up to the inclusion of a rapidly decaying error term on the right-hand side.
Recalling \eqref{algebraic dominant case} and \eqref{algebriac coefficients}, it follows that 
\begin{equation*}
  \|Ef\|_{\BL{k,A}^p(B_{r})}^p \leq \frac{C\cdot C^{\mathrm{I}}_{j+1,\delta}(d, r)}{(\log r)^{2\delta}}  \sum_{O_{j+1} \in \O_{j+1}}\|Ef_{O_{j+1}}\|_{\BL{k,A_{j+1}}^p(O_{j+1})}^p+\mathrm{err}(j+1).
\end{equation*}
Provided $r$ is chosen to be sufficiently large, one may absorb the unwanted constant~$C$ by the additional $(\log r)^{-2\delta}$ factor and thereby deduce $(\mathrm{I})_{j+1}$. 
\\
\paragraph{\underline{Property II}} Fix $O_j \in \O_{j,\mathrm{alg}}$ and note that
\begin{align}
\nonumber
    \sum_{O_{j+1} \in \O_{j+1}(O_j)} \|f_{O_{j+1}}\|_{2}^2 &= \sum_{B \in \cB(O_j)} \sum_{b \in \fB} \|f_{B, \trans,b}\|_{2}^2 \\
    \label{algbraic case property II}
    &\lesssim \sum_{B \in \cB(O_j)}  \|f_{B, \trans}\|_{2}^2
\end{align}
by the definition of $f_{O_{j+1}}$ and \eqref{algebraic 3}. To estimate the latter sum one exploits the transversal property of the wave packets of the $f_{B, \trans}$. The key observation is the following algebraic-geometric result of Guth, which appears in Lemma 5.7 of~\cite{Guth} and can be roughly thought of as a continuum version of the fundamental theorem of algebra (or B\'ezout's theorem). 

\begin{lemma}[\cite{Guth}]\label{transversal intersection lemma}
Suppose $T$ is an infinite cylinder in $\R^n$ of radius $\rho$ and central axis $\ell$ and $\bY$ is a transverse complete intersection. For $\alpha > 0$ let
\begin{equation*}
    \bY_{> \alpha} := \{ y \in \bY : \angle(T_y\bY, \ell) > \alpha \}.
\end{equation*}
The set $\bY_{> \alpha} \cap T$ is contained in a union of $O\big((\Deg \bY)^n\big)$ balls of radius $\rho\alpha^{-1}$.
\end{lemma}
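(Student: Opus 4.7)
The plan is to combine a quantitative transversality estimate with a Milnor--Thom type topological complexity bound for $\bY_{>\alpha}\cap T$. After rotating and translating, one may take $\ell$ to be the $x_n$-axis with unit direction $u := e_n$, so that $T = B^{n-1}(0,\rho)\times\R$; write $\pi\colon\R^n\to\R^{n-1}$ for projection onto the first $n-1$ coordinates.

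The first step is a quantitative reformulation of the angle condition. If $y\in\bY_{>\alpha}$, then every unit vector $v\in T_y\bY$ satisfies $|\pi(v)|>\sin\alpha$: indeed, $\angle(v,\ell)\ge\angle(T_y\bY,\ell)>\alpha$ forces $|\langle v,u\rangle|<\cos\alpha$, and hence $|\pi(v)|^2 = 1-\langle v,u\rangle^2>\sin^2\alpha$. In particular, $\pi|_{\bY_{>\alpha}}$ is an immersion whose Jacobian has smallest singular value at least $\sin\alpha$, and any smooth path $\gamma\subset\bY_{>\alpha}$ satisfies $|\langle\dot\gamma,u\rangle|\le(\cot\alpha)\,|\pi(\dot\gamma)|$.

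Next I would invoke a complexity bound. Writing $\bY = Z(P_1,\ldots,P_{n-m})$ with each $\deg P_j\le\Deg\bY$, the subspace $(T_y\bY)^\perp$ at a smooth point $y$ equals $\operatorname{span}\{\nabla P_j(y)\}$, so the condition $\angle(T_y\bY,\ell)>\alpha$ becomes the requirement that the projection of $u$ onto this span have norm exceeding $\sin\alpha$; this is a polynomial inequality in $y$ of degree $O(\Deg\bY)$ coming from the Gram matrix of the $\nabla P_j(y)$. A standard Milnor--Thom type estimate then shows that $\bY_{>\alpha}\cap T$ has at most $O((\Deg\bY)^n)$ connected components.

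It would then suffice to show that each connected component $C$ has diameter $\lesssim\rho/\alpha$, since one may cover $\bY_{>\alpha}\cap T$ by placing a single ball of radius $\rho/\alpha$ at a chosen point of each component. The $\pi$-extent of $C$ is trivially at most $2\rho$; for the $\ell$-extent, join $y_1,y_2\in C$ by a smooth path $\gamma\subset C$ of $\pi$-arclength $O(\rho)$, and use the inequality $|\langle\dot\gamma,u\rangle|\le(\cot\alpha)\,|\pi(\dot\gamma)|$ to obtain $|y_{1,n}-y_{2,n}|\lesssim\rho\cot\alpha\lesssim\rho/\alpha$. The main obstacle is making this last claim precise: an arbitrary connected component of a semi-algebraic set need not admit a connecting path with $\pi$-arclength of order $\rho$, and controlling this likely requires a more delicate argument, for instance by first passing to a cylindrical algebraic decomposition of $C$ into finitely many cells over each of which $\pi$ is a diffeomorphism onto its image, absorbing a further $O((\Deg\bY)^{O(1)})$ factor into the final count of balls.
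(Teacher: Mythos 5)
First, a point of comparison: the paper does not prove this lemma at all --- it is quoted verbatim from Lemma 5.7 of \cite{Guth} --- so your proposal has to stand on its own rather than be matched against an argument in the present text. Your set-up is fine: the quantitative reformulation of the angle condition is correct, and describing $\bY_{>\alpha}\cap T$ as a semialgebraic set cut out by the defining equations of $\bY$, a Gram-determinant inequality of degree $O_n(\Deg\bY)$ and the quadratic inequality for $T$, and then invoking a Milnor--Thom/Oleinik--Petrovsky bound for the number of connected components, legitimately produces the $O\big((\Deg\bY)^{n}\big)$ count.

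The problem is the step you yourself flag, and it is not a removable technicality: it is the entire content of the lemma, and the claim you are trying to reduce to --- one ball of radius $\rho\alpha^{-1}$ per connected component --- is false. The inequality $|\langle\dot\gamma,u\rangle|\le(\cot\alpha)\,|\pi(\dot\gamma)|$ controls axial displacement by the horizontal \emph{arclength} of a connecting path, and this can greatly exceed the horizontal \emph{extent} $2\rho$. Concretely, take $n=3$, $m=1$ and $\bY = Z\big(x_1 - f(x_2),\, x_3 - g(x_2)\big)$ with $f(x_2):=\tfrac{\rho}{4}T_d(x_2/\rho)$ a rescaled Chebyshev polynomial and $g$ a degree-$O(d)$ polynomial chosen with $|g'|\le\tfrac12\cot\alpha\,(f'^2+1)^{1/2}$ pointwise; since $\int_{-\rho}^{\rho}|f'|\sim d\rho$, one can arrange $g(\rho)-g(-\rho)\sim d\rho\cot\alpha$. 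This is a connected transverse complete intersection of degree $O(d)$, lying in a cylinder of radius $O(\rho)$ about the $x_3$-axis, with every tangent line making angle $>\alpha$ with the axis, yet with axial extent $\sim d\rho\,\alpha^{-1}$: a single component requiring $\sim d$ balls of radius $\rho\alpha^{-1}$. No cylindrical algebraic decomposition can rescue a per-component diameter bound of $\rho\alpha^{-1}$, because the statement being decomposed is not true. What a repaired version of your strategy must prove is that each component is covered by $O\big((\Deg\bY)^{O(1)})$ such balls, and that requires a genuine quantitative input bounding the horizontal arclength of connecting paths by $(\Deg\bY)^{O(1)}\rho$ --- via B\'ezout/Cauchy--Crofton applied to the projection when $m=1$, or a geodesic-diameter bound of D'Acunto--Kurdyka type for general $m$. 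That input is exactly what your sketch omits, and even once supplied it yields a worse degree dependence than the stated $O\big((\Deg\bY)^{n}\big)$; this would still suffice for the application in Section~\ref{structure lemma section}, where any $O_{\Deg\bY}(1)$ bound is acceptable, but it does not prove the lemma as written.
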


By choosing the implicit constants correctly in Definition~\ref{new tangent definition}, a wave packet $(\theta,v) \in \T_B$ belongs to $\T_{B,\trans}$ if and only if the angle condition ii) fails to be satisfied. Indeed, if ii) holds, then since $T_{\theta,v} \cap B \cap N_{\rho_j^{1/2 + \delta_m}}\bY \neq \emptyset$ by the definition of $\T_B$, the containment property i) automatically follows and therefore $(\theta,v) \in \T_{B,\tang}$ (see, for instance, \cite[Proposition 9.2]{GHI} for details of an argument of this type). Thus, given any $(\theta,v) \in \bigcup_{B \in \cB(O_j)} \T_{B, \trans}$, it follows from the definitions that 
\begin{equation}\label{angle lower bound}
    \angle(G(\theta), T_y \bY) \gtrsim \rho_{j+1}^{-1/2 + \delta_{m-1}}
\end{equation}
for some $y \in \bY\cap 2B$ with $|y - x| \lesssim \rho_{j+1}^{1/2 + \delta_{m-1}}$ for some $x\in T_{\theta,v}$. This implies that 
\begin{equation*}
    T \cap B \cap \bY_{\gtrsim \rho_{j+1}^{-1/2 + \delta_{m-1}}} \neq \emptyset
\end{equation*}
where $T$ is the infinite cylinder that shares the core line of $T_{\theta,v}$ but has radius $\sim \rho_{j+1}^{1/2 + \delta_{m-1}}$. Observe that
\begin{equation*}
   \underbrace{\rho_{j+1}^{1/2 + \delta_{m-1}}}_{\substack{\sim \textrm{\tiny Radius} \\ \textrm{\tiny of $T$}}}  \big[\underbrace{\rho_{j+1}^{-1/2 + \delta_{m-1}}}_{\substack{\sim \textrm{\tiny Angle } \\ \textrm{\tiny from \eqref{angle lower bound}}}} \big]^{-1} = \underbrace{\rho_{j+1}}_{\substack{\textrm{\tiny Radius} \\ \textrm{\tiny of $B \in \cB(O_j)$}}}.
\end{equation*} 
Thus, by Lemma~\ref{transversal intersection lemma}, any $(\theta,v) \in \bigcup_{B \in \cB(O_j)} \T_{B, \trans}$ lies in at most $O(d^n)$ of the sets~$\T_{B, \trans}$ and, consequently, by the basic orthogonality between the wave packets,
\begin{equation*}
    \sum_{B \in \cB(O_j)}  \|f_{B, \trans}\|_{2}^2 \lesssim d^n \|f_{O_j}\|_{2}^2.
\end{equation*}
Combining this inequality with \eqref{algbraic case property II} and summing over all $O_j \in \O_{j,\mathrm{alg}}$,
\begin{equation*}
    \sum_{O_{j+1} \in \O_{j+1}}  \|f_{O_{j+1}}\|_{2}^2 \lesssim d^n \sum_{O_{j} \in \O_{j}}\|f_{O_j}\|_{2}^2.
\end{equation*}
Applying $(\mathrm{II})_j$ and \eqref{algebriac coefficients}, one concludes that
\begin{equation*}
    \sum_{O_{j+1} \in \O_{j+1}} \|f_{O_{j+1}}\|_{2}^2 \lesssim d^{-n\delta}C^{\mathrm{II}}_{j+1,\delta}(d, r) \|f\|_{2}^2.
\end{equation*}
Thus, provided $d$ is chosen sufficiently large, one deduces $(\mathrm{II})_{j+1}$. 
\\

\paragraph{\underline{Property III}} Fix $O_j \in \O_{j,\mathrm{alg}}$ and $O_{j+1} \in \O_{j+1}(O_j)$ and suppose that $\tilde{f}_{O_{j+1}} = \tilde{f}_{B, \trans, b}$. Recall that the function $f_{B, \trans}$ is concentrated on scale $\rho_{j}$ wave packets which are $\rho_{j}^{-1/2+\delta_m}$-tangent to some translate of $\bZ$ on some $\rho_{j}$-ball. It therefore follows from the transverse equidistribution estimate \eqref{transverse equidistribution inequality} of Lemma~\ref{transverse equidistribution lemma} with $\rho := 1$ that
\begin{equation*}
    \|\tilde{f}_{B, \trans, b}\|_{2}^2 \lesssim_{\Deg\bZ} r^{\bar{C}\delta_m} \Big(\frac{\rho_j}{\rho_{j-1}}\Big)^{-\frac{n-m}{2}} \|f_{B, \trans}\|_{2}^2.
\end{equation*}
On the other hand, by the basic orthogonality between the wave packets,
\begin{equation*}
   \|f_{B, \trans}\|_{2}^2 \lesssim \|f_{O_j}\|_{2}^2.
\end{equation*}
Applying $(\mathrm{III})_j$ and \eqref{algebriac coefficients}, one concludes that
\begin{equation*}
    \|f_{O_{j+1}}\|_{2}^2 \lesssim_{\Deg\bZ} d^{-\delta}C^{\mathrm{III}}_{j+1,\delta}(d, r) \Big(\frac{r}{\rho_{j+1}}\Big)^{-\frac{n-m}{2}}d^{-\#_{\stc}(j+1)(m-1)}\|f\|_{2}^2.
\end{equation*}
Thus, provided $d$ is chosen sufficiently large, one deduces $(\mathrm{III})_{j+1}$. The local version, $(\mathrm{III}_{\mathrm{loc}})_{j+1}$, follows in a similar manner, using the local transverse equidistribution estimate \eqref{transverse equidistribution inequality} for general values of $1 \leq \rho \leq \rho_{j+1}$.




\section{Proof of Theorem~\ref{main theorem}}\label{proof section}

\subsection{The second algorithm} Theorem~\ref{main theorem} is established by repeated application of the algorithm \texttt{[alg 1]} from the previous section. This process forms part of a second algorithm which is referred to as \texttt{[alg 2]} and is described presently.

Throughout this section, let $p_{\ell}$ denote Lebesgue exponents, to be fixed later, defined for $k \leq \ell \leq n$ and satisfying $$p_k \geq p_{k+1} \geq \dots \geq p_n =: p \geq 2.$$ The numbers $0 \leq \alpha_{\ell}, \beta_{\ell} \leq 1$ for $k \leq \ell \leq n$ are then defined in terms of the $p_{\ell}$ by
\begin{equation*}
    \frac{1}{p_{\ell}} =: \frac{1-\alpha_{\ell-1}}{2} + \frac{\alpha_{\ell-1}}{p_{\ell-1}} \quad \textrm{and} \quad \beta_{\ell} := \prod_{i = \ell}^{n-1} \alpha_{i} \qquad \textrm{for $k+1 \leq \ell \leq n-1$}
\end{equation*}
and $\alpha_n :=: \beta_n := 1$. Also fix $\varepsilon > 0$ and define the small parameters $\delta_{\ell}$ as in the previous section so that the inequalities in \eqref{small parameters} hold. 

There are two stages to \texttt{[alg 2]}, which can roughly be described as follows:
\begin{itemize}
    \item \textbf{The recursive stage}: $Ef$ is repeatedly decomposed into pieces with favourable tangency properties with respect to varieties of progressively lower dimension.  
    \item \textbf{The final stage}: $Ef$ is further decomposed into very small scale pieces.
\end{itemize}
To begin, the recursive stage of \texttt{[alg 2]} is described. \\

\paragraph{\underline{\texttt{Input}}}
Fix $R \gg 1$ and let  $f \colon B^{n-1} \to \C$ be smooth and bounded and, without loss of generality, assume that $f$ satisfies the \emph{non-degeneracy hypothesis} 
\begin{equation}\label{non degeneracy hypothesis}
\|Ef\|_{\BL{k,A}^{p}(B_{\>\!\!R})} \geq C_{\textrm{hyp}} R^{\varepsilon}\|f\|_2
\end{equation}
where $C_{\textrm{hyp}}$ and $A \in \N$ are constants which are chosen sufficiently large to satisfy the forthcoming requirements of the proof.\\ 

\paragraph{\underline{\texttt{Output}}} The $(n+1-\ell)$th step of the recursion will produce:
\begin{itemize}
    \item An $(n+1-\ell)$-tuple of:
    \begin{itemize}
        \item scales $\vec{r}_{\ell} = (r_n, \dots, r_{\ell})$ satisfying $R = r_n > r_{n-1} > \dots > r_{\ell}$;
        \item large and (in general) non-admissible parameters $\vec{D}_{\ell} = (D_n, \dots, D_{\ell})$;
        \item integers $\vec{A} = (A_n, \dots, A_{\ell})$ satisfying $A = A_n > A_{n-1} > \dots > A_{\ell}$.
    \end{itemize}
    Each of these $(n+1-\ell)$-tuples is formed by adjoining a component to the corresponding $(n-\ell)$-tuple from the previous stage.
    \item A family $\vec{\Sc}_{\ell}$ of $(n+1-\ell)$-tuples of transverse complete intersections $\vec{S}_{\ell} = (S_n, \dots, S_{\ell})$ satisfying $\dim S_i = i$ and $\Deg S_i = O(1)$ for $\ell \leq i \leq n$. 
    \item An assignment of a function $f_{\vec{S}_{\ell}}$ and a ball $B_{r_{\ell}}[\vec{S}_{\ell}]$ to each $\vec{S}_{\ell} \in \vec{\Sc}_{\ell}$ with the property that $f_{\vec{S}_{\ell}}$ is concentrated on scale $r_{\ell}$ wave packets which are $r_\ell^{-1/2+\delta_{\ell}}$-tangent to $S_{\ell}$ in $B_{r_{\ell}}[\vec{S}_{\ell}]$ (here $S_{\ell}$ is the final component of $\vec{S}_{\ell}$).   For notational convenience, the dependence on $\vec{S}_{\ell}$ will often be suppressed in the $B_{r_\ell}[\vec{S}_{\ell}]$ notation by simply writing $B_{r_{\ell}}$.
\end{itemize}
This data is chosen so that the following properties hold:

\begin{notation} Throughout this section a large number of harmless $R^{C\delta_0}$ factors appear in the inequalities, where $C$ is a constant depending on $p$ and $n$. By choosing $\delta_0$ sufficiently small relative to $\varepsilon$, at the end of the argument one may dominate any $R^{C\delta_0}$ by $R^{\varepsilon}$, say, which constitutes an acceptable loss in the inequality. Thus, for notational convenience, given $A, B \geq 0$ let $A \lessapprox B$ or $B \gtrapprox A$ denote $A \lesssim R^{C\delta_0} B$.
\end{notation}

\paragraph{\underline{Property 1}} The inequality
\begin{equation}\label{l step}
      \|Ef\|_{\BL{k,A}^p(B_{\>\!\!R})} \lessapprox M(\vec{r}_{\ell}, \vec{D}_{\ell}) \|f\|_2^{1-\beta_{\ell}} \Big( \sum_{\vec{S}_{\ell} \in \vec{\Sc}_{\ell}} \|Ef_{\vec{S}_{\ell}}\|_{\BL{k,A_{\ell}}^{p_{\ell}}(B_{r_{\ell}})}^{p_{\ell}}\Big)^{\frac{\beta_{\ell}}{p_{\ell}}}
\end{equation}
holds for 
\begin{equation*}
    M(\vec{r}_{\ell}, \vec{D}_{\ell}) := \Big(\prod_{i=\ell}^{n-1}D_i\Big)^{(n-\ell)\delta}\Big(\prod_{i=\ell}^{n-1} r_i^{\frac{1}{2}(\beta_{i+1}-\beta_i)}D_i^{\frac{1}{2}(\beta_{i+1} - \beta_{\ell})}\Big).
\end{equation*}
\paragraph{\underline{Property 2}}  For $\ell \leq n-1$ the inequality
\begin{equation*}
    \sum_{\vec{S}_{\ell} \in \vec{\Sc}_{\ell}} \|f_{\vec{S}_{\ell}}\|_2^2 \lessapprox D_{\ell}^{1 + \delta} \sum_{\vec{S}_{\ell+1} \in \vec{\Sc}_{\ell+1}} \|f_{\vec{S}_{\ell+1}}\|_2^2 
\end{equation*}
holds.\\

\paragraph{\underline{Property 3}}    For $\ell \leq n-1$ the inequalities
\begin{equation*}
    \max_{\vec{S}_{\ell} \in \vec{\Sc}_{\ell}} \|f_{\vec{S}_{\ell}}\|_2^2 \lessapprox\Big(\frac{r_{\ell+1}}{r_{\ell}}\Big)^{-\frac{n-\ell-1}{2}}  D_{\ell}^{-\ell + \delta} \max_{\vec{S}_{\ell+1} \in \vec{\Sc}_{\ell+1}} \|f_{\vec{S}_{\ell+1}}\|_2^2 
\end{equation*}
and
\begin{equation*}
    \max_{\substack{\vec{S}_{\ell} \in \vec{\Sc}_{\ell} \\ \theta : \rho^{-1/2}-\mathrm{cap}}} \|f_{\vec{S}_{\ell}}\|_{L^2_{\mathrm{avg}}(\theta)}^2 \lessapprox \Big(\frac{r_{\ell+1}}{r_{\ell}}\Big)^{-\frac{n-\ell-1}{2}}D_{\ell}^{\delta}  \max_{\substack{\vec{S}_{\ell+1} \in \vec{\Sc}_{\ell+1} \\ \theta : \rho^{-1/2}-\mathrm{cap}}} \|f_{\vec{S}_{\ell+1}}\|_{L^2_{\mathrm{avg}}(\theta)}^2 
\end{equation*}
hold for $1 \leq \rho \leq r_{\ell}$.\\

\paragraph{\underline{\texttt{First step}}} Vacuously, the function $f$ is concentrated on scale $R$ wave packets which are $R^{-1/2+\delta_n}$-tangent to the $n$-dimensional variety $\R^n$ on $B_{\>\!\!R}$. Thus, one may define
\begin{itemize}
        \item $r_n := R$; $D_n := 1$ and $A_n:= A$.
        \item $\Sc_n$ is the collection consisting of a single 1-tuple $\vec{S}_n = (S_n)$ where $S_n := \R^n$.
        \item $f_{\vec{S}_n} := f$ and $B_{r_n}[\vec{S}_n] := B_{\>\!\!R}$. 
\end{itemize}
With these definitions, all the desired properties vacuously hold.\\

\paragraph{\underline{\texttt{($n+2-\ell$)th step}}} Let $\ell \geq 1$ and suppose that the recursive algorithm has ran through $n+1-\ell$ steps. Since each function $f_{\vec{S}_{\ell}}$ is concentrated on wave packets $r_{\ell}^{-1/2+\delta_{\ell}}$-tangent to $S_{\ell}$ on $B_{r_{\ell}}[S_{\ell}]$, one may apply \texttt{[alg 1]} to bound the $k$-broad norm $\|Ef_{\vec{S}_{\ell}}\|_{\BL{k,A_{\ell}}^{p_{\ell}}(B_{r_{\ell}})}$. One of two things can happen: either \texttt{[alg 1]} terminates due to the stopping condition \texttt{[tiny]} or it terminates due to the stopping condition \texttt{[tang]}. The current recursive process terminates if the contributions from terms of the former type dominate:\\

\paragraph{\underline{\texttt{Stopping condition}}} The recursive stage of \texttt{[alg 2]} has a single stopping condition, which is denoted by \texttt{[tiny-dom]}. 
\begin{itemize}
    \item[\texttt{Stop:[tiny-dom]}] Suppose that the inequality
  \begin{equation}\label{tiny case 1}
      \sum_{\vec{S}_{\ell} \in \vec{\Sc}_{\ell}} \|Ef_{\vec{S}_{\ell}}\|_{\BL{k,A_{\ell}}^{p_{\ell}}(B_{r_{\ell}})}^{p_{\ell}} \leq\, 2\!\!\!  \sum_{\vec{S}_{\ell} \in \vec{\Sc}_{\ell,\textrm{\texttt{tiny}}}} \|Ef_{\vec{S}_\ell}\|_{\BL{k,A_{\ell}}^{p_{\ell}}(B_{r_{\ell}})}^{p_{\ell}}
\end{equation}
holds, where the right-hand summation is restricted to those $S_{\ell} \in \vec{\Sc}_{\ell}$ for which \texttt{[alg 1]} terminates owing to the stopping condition \texttt{[tiny]}. Then \texttt{[alg 2]} terminates.
\end{itemize}

Assume that the condition \texttt{[tiny-dom]} is not met. Necessarily,
  \begin{equation}\label{tangent case 1}
      \sum_{\vec{S}_{\ell} \in \vec{\Sc}_{\ell}} \|Ef_{\vec{S}_{\ell}}\|_{\BL{k,A_{\ell}}^{p_{\ell}}(B_{r_{\ell}})}^{p_{\ell}} \leq\, 2\!\!\! \sum_{\vec{S}_{\ell} \in \vec{\Sc}_{\ell,\textrm{\texttt{tang}}}} \|Ef_{\vec{S}_{\ell}}\|_{\BL{k,A_{\ell}}^{p_{\ell}}(B_{r_{\ell}})}^{p_{\ell}},
\end{equation}
where the right-hand summation is restricted to those $S_{\ell} \in \vec{\Sc}_{\ell}$ for which \texttt{[alg 1]} does not terminate owing to \texttt{[tiny]} and therefore terminates owing to \texttt{[tang]}. Consequently, for each $\vec{S}_{\ell} \in \vec{\Sc}_{\ell,\textrm{\texttt{tang}}}$ the inequalities
  \begin{equation}\label{recursive step property 1}
      \|Ef_{\vec{S}_{\ell}}\|_{\BL{k,A_{\ell}}^{p_{\ell}}(B_{r_{\ell}})}^{p_{\ell}} \lessapprox\, D_{\ell-1}^{\delta}\!\!\!\!\! \sum_{S_{\ell-1} \in \Sc_{\ell-1}[\vec{S}_{\ell}]} \|Ef_{\vec{S}_{\ell-1}}\|_{\BL{k,2A_{\ell-1}}^{p_{\ell}}(B_{r_{\ell-1}})}^{p_{\ell}},
\end{equation}
and
\begin{align}
\label{recursive step property 2}
    \sum_{S_{\ell-1} \in \Sc_{\ell-1}[\vec{S}_{\ell}]} \|f_{\vec{S}_{\ell-1}}\|_2^2 &\lessapprox\, D_{\ell-1}^{1+ \delta}\|f_{\vec{S}_{\ell}}\|_2^2; \\
\label{recursive step property 3}
\max_{S_{\ell-1} \in \Sc_{\ell-1}[\vec{S}_{\ell}]} \|f_{\vec{S}_{\ell-1}}\|_2^2 &\lessapprox \Big(\frac{r_{\ell}}{r_{\ell-1}}\Big)^{-\frac{n-\ell}{2}} D_{\ell-1}^{-(\ell-1)+\delta}\|f_{\vec{S}_{\ell}}\|_2^2;\\
\label{recursive step property 3loc}
\max_{\substack{S_{\ell-1} \in \Sc_{\ell-1}[\vec{S}_{\ell}]\\ \theta : \rho^{-1/2}-\mathrm{cap}}} \|f_{\vec{S}_{\ell-1}}\|_{L^2_{\mathrm{avg}}(\theta_{\ell})}^2 &\lessapprox \Big(\frac{r_{\ell}}{r_{\ell-1}}\Big)^{-\frac{n-\ell}{2}}D_{\ell-1}^{\delta} \max_{ \theta: \rho^{-1/2}-\mathrm{cap}}\|f_{\vec{S}_{\ell}}\|_{L^2_{\mathrm{avg}}(\theta)}^2
\end{align}
hold for $1 \leq \rho \leq r_{\ell-1}$ for some choice of:
\begin{itemize}
    \item Scale $R^{\delta} < r_{\ell-1} < r_{\ell}$, an (in general) non-admissible number $D_{\ell-1}$ and some large integer $A_{\ell-1}$ satisfying $A_{\ell-1} \sim A_{\ell}$;
    \item Family $\Sc_{\ell-1}[\vec{S}_{\ell-1}]$ of $(\ell-1)$-dimensional transverse complete intersections of degree $O(1)$;
    \item Assignment $B_{r_{\ell-1}}[\vec{S}_{\ell-1}]$ of an $r_{\ell-1}$-ball to every $S_{\ell-1} \in \Sc_{\ell-1}[\vec{S}_{\ell}]$; 
    \item Assignment $f_{\vec{S}_{\ell-1}} = (f_{\vec{S}_{\ell}})_{S_{\ell-1}}$ of a function to every $S_{\ell-1} \in \Sc_{\ell-1}[\vec{S}_{\ell}]$ which is concentrated on wave packets which are $r_{\ell-1}^{-1/2+\delta_{\ell-1}}$-tangent to $S_{\ell-1}$ on $B_{r_{\ell-1}}[S_{\ell-1}]$.
\end{itemize}
Each inequality \eqref{recursive step property 1}, \eqref{recursive step property 2}, \eqref{recursive step property 3} and \eqref{recursive step property 3loc} is obtained by combining the definition of the stopping condition \texttt{[tang]} with Properties I, II and both the global and local variants of Property III from \texttt{[alg 1]}, respectively.\footnote{Here the `error terms' $\mathrm{err}(j) := jr^{-N}\|f\|_2^p$ in Property I of \texttt{[alg 1]} can be ignored owing to the non-degeneracy hypothesis \eqref{non degeneracy hypothesis}.} Indeed, using the notation from \texttt{[alg 1]}, we take $$r := r_{\ell},\quad r_{\ell-1} := \rho_{\!J}^{1-\tilde{\delta}_{\ell-1}},\quad \text{and}\quad D_{\ell-1} := d^{\#_{\stc}(J)}.$$
Note that the $R^{O(\delta_0)}D_{\ell-1}^{\delta}$ factors arise in the above inequalities owing to~\eqref{small coefficients}.

The $r_{\ell-1}$, $D_{\ell-1}$ and $A_{\ell-1}$ can depend on the choice of $\vec{S}_{\ell}$, but this dependence can be essentially removed by pigeonholing. Indeed, recalling that $\#_{\stc}(J)= O(\log R)$, one may find a subset of the~$\Sc_{\ell,\textrm{\texttt{tang}}}$ over which the $D_{\ell-1}$ all have a common value and, moreover, the inequality \eqref{tiny case 1} still holds except that the constant $\frac{1}{2}$ is now replaced with, say,~$R^{\delta_0}$. A brief inspection of \texttt{[alg 1]} shows that, once we have pigeonholed in the parameter $N$ above, both $r_{\ell-1}$ and $A_{\ell-1}$ immediately inherit the desired uniformity.  

Letting $\vec{\Sc}_{\ell-1}$ denote the structured set 
\begin{equation*}
    \vec{\Sc}_{\ell-1} := \Big\{\, (\vec{S}_{\ell}, S_{\ell-1})\, :\, \vec{S}_{\ell} \in \vec{\Sc}_{\ell,\textrm{\texttt{tang}}} \textrm{ and } S_{\ell-1} \in \Sc_{\ell-1}[\vec{S}_{\ell}] \,\Big\},
\end{equation*}
where $\vec{\Sc}_{\ell,\textrm{\texttt{tang}}}$ is understood to be the refined collection described in the previous paragraph, it remains to verify the desired properties for the newly constructed data. Property 2 follows immediately from \eqref{recursive step property 2} and Property 3 from \eqref{recursive step property 3} and \eqref{recursive step property 3loc}, so it remains only to verify Property 1. 

By combining the inequality \eqref{l step} from the previous stage of the algorithm with~\eqref{tangent case 1} and \eqref{recursive step property 1}, one deduces that
   \begin{equation*}
      \|Ef\|_{\BL{k,A}^p(B_{\>\!\!R})} \lessapprox D_{\ell-1}^{\delta} M(\vec{r}_{\ell}, \vec{D}_{\ell}) \|f\|_2^{1-\beta_{\ell}} \Big(\!\!\sum_{\vec{S}_{\ell-1}\in \vec{\Sc}_{\ell-1}}\!\|Ef_{\vec{S}_{\ell-1}}\|_{\BL{k,2A_{\ell-1}}^{p_{\ell}}(B_{r_{\ell-1}})}^{p_{\ell}}\Big)^{\frac{\beta_{\ell}}{p_{\ell}}}.
\end{equation*}
Writing
\begin{equation*}
    \Big(\sum_{\vec{S}_{\ell-1}\in \vec{\Sc}_{\ell-1}}\|Ef_{\vec{S}_{\ell-1}}\|_{\BL{k,2A_{\ell-1}}^{p_{\ell}}(B_{r_{\ell-1}})}^{p_{\ell}}\Big)^{\frac{1}{p_{\ell}}} =  \Big\| \|Ef_{\vec{S}_{\ell-1}}\|_{\BL{k,2A_{\ell-1}}^{p_{\ell}}(B_{r_{\ell-1}})}\Big\|_{\ell^{p_{\ell}}(\vec{\Sc}_{\ell-1})},
\end{equation*}
one may apply the logarithmic convexity inequality from Lemma~\ref{logarithmic convexity inequality lemma} to dominate this expression by
\begin{equation*}
  \Big\| \|Ef_{\vec{S}_{\ell-1}}\|_{\BL{k,A_{\ell-1}}^{2}(B_{r_{\ell-1}})}\Big\|_{\ell^2(\vec{\Sc}_{\ell-1})}^{1-\alpha_{\ell-1}}  \Big\| \|Ef_{\vec{S}_{\ell-1}}\|_{\BL{k,A_{\ell-1}}^{p_{\ell-1}}(B_{r_{\ell-1}})}\Big\|_{\ell^{p_{\ell-1}}(\vec{\Sc}_{\ell-1})}^{\alpha_{\ell-1}}.
\end{equation*}
By the standard $L^2$ estimate \eqref{broad energy identity} applied to broad norms, 
\begin{equation*}
 \Big\| \|Ef_{\vec{S}_{\ell-1}}\|_{\BL{k,A_{\ell-1}}^{2}(B_{r_{\ell-1}})}\Big\|_{\ell^2(\vec{\Sc}_{\ell-1})} \lesssim r_{\ell-1}^{1/2}\Big( \sum_{\vec{S}_{\ell-1} \in \vec{\Sc}_{\ell-1}}  \|f_{\vec{S}_{\ell-1}}\|_2^2 \Big)^{1/2}
\end{equation*}
and, by Property 2 for the the families of functions $(f_{\vec{S}_{i}})_{\vec{S}_i \in \vec{\Sc}_i}$ for $\ell - 1 \leq i \leq n-1$, it follows that
\begin{equation*}
   \Big\| \|Ef_{\vec{S}_{\ell-1}}\|_{\BL{k,A_{\ell-1}}^{2}(B_{r_{\ell-1}})}\Big\|_{\ell^2(\vec{\Sc}_{\ell-1})} \lessapprox \Big(r_{\ell-1} \prod_{i=\ell-1}^{n-1}D_{i}^{1+\delta}\Big)^{1/2} \|f\|_2.
\end{equation*}
One may readily verify that
\begin{equation*}
    D_{\ell-1}^{\delta}\cdot M(\vec{r}_{\ell}, \vec{D}_{\ell})\cdot  \Big(r_{\ell-1} \prod_{i=\ell-1}^{n-1}D_{i}^{1+\delta}\Big)^{\frac{1}{2}(1-\alpha_{\ell-1})\beta_{\ell}} \leq M(\vec{r}_{\ell-1}, \vec{D}_{\ell-1})
\end{equation*}
and so combining the above estimates yields
   \begin{equation*}
      \|Ef\|_{\BL{k,A}^p(B_{\>\!\!R})} \lessapprox M(\vec{r}_{\ell-1}, \vec{D}_{\ell-1}) \|f\|_2^{1-\beta_{\ell-1}} \Big\| \|Ef_{\vec{S}_{\ell-1}}\|_{\BL{k,A_{\ell-1}}^{p_{\ell-1}}(B_{r_{\ell-1}})}\Big\|_{\ell^{p_{\ell-1}}(\vec{\Sc}_{\ell-1})}^{\beta_{\ell-1}},
\end{equation*}
which is Property 1 in this case.
\\ 

\paragraph{\underline{\texttt{The final stage}}} If the algorithm has not stopped by the $k$th step, then it necessarily terminates at the $k$th step. Indeed, otherwise \eqref{l step} would hold for $\ell = k-1$ and functions $f_{\vec{S}_{{k-1}}}$ concentrated on wave packets which are tangent to some transverse complete intersection of dimension $k-1$. By the vanishing property of the $k$-broad norms as described in Lemma~\ref{key k-broad lemma}, one would then have
\begin{equation*}
    \|Ef_{\vec{S}_{k-1}}\|_{\BL{k,A_{k-1}}^{p_{k-1}}(B_{r_{k-1}})} = \mathrm{RapDec}(R)\|f_{\vec{S}_{k-1}}\|_2
\end{equation*}
and it would easily follow from \eqref{l step} that $\|Ef\|_{\BL{k,A}^p(B_{\>\!\!R})} = \mathrm{RapDec}(R)\|f\|_2$. If $R$ is sufficiently large, then this would contradict the non-degeneracy hypothesis \eqref{non degeneracy hypothesis}. 

Suppose the recursive process terminates at step $m$, so that $m \geq k$. For each $\vec{S}_m \in \vec{\Sc}_{m,\textrm{\texttt{tiny}}}$ let $\O[\vec{S}_m]$ denote the final collection of cells output by \texttt{[alg 1]} (that is, the collection denoted by $\O_J$ in the notation of Section~\ref{structure lemma section}) when applied to estimate the broad norm $\|Ef_{\vec{S}_m}\|_{\BL{k,A_m}^{p_m}(B_{r_m})}$. Each $O \in \O[\vec{S}_m]$ has diameter at most $R^{\delta_0}$ by the definition of the stopping condition \texttt{[tiny]}. By Properties I, II and III of \texttt{[alg 1]} one has 
   \begin{equation*}
      \|Ef_{\vec{S}_m}\|_{\BL{k,A_m}^{p_m}(B_{r_m})}^{p_m} \lessapprox\ D_{m-1}^{\delta}\! \sum_{O \in \O[\vec{S}_m]} \|Ef_O \|_{\BL{k,A_{m-1}}^{p_m}(O)}^{p_m},
\end{equation*}
for some $A_{m-1} \sim A_m$ where the functions $f_O$ satisfy
\begin{equation}\label{tiny property 2}
    \sum_{O \in \O[\vec{S}_m]} \|f_O\|_2^2 \lessapprox D_{m-1}^{1+\delta}\|f_{\vec{S}_m}\|_2^2
    \end{equation}
and
\begin{equation}\label{tiny property 3}
    \max_{O \in \O[\vec{S}_m]} \|f_O\|_2^2 \lessapprox \Big(\frac{r_m}{r_{m-1}}\Big)^{-\frac{n-m}{2}}D_{m-1}^{-(m-1)+\delta}\|f_{\vec{S}_m}\|_2^2
\end{equation}
for $D_{m-1}$ a large non-admissible parameter. In particular, $D_{m-1} := d^{\#_{\stc}(J)}$ where~$J$ is the stopping time for this final application of \texttt{[alg 1]}. Once again, by pigeonholing, one may pass to a subcollection of $\Sc_{m, \textrm{\texttt{tiny}}}$ and thereby assume that the $D_{m-1}$ (and also the $A_{m-1}$) all share a common value. 

If $\O$ denotes the union of the $\O[\vec{S}_m]$ over all $\vec{S}_m$ belonging to subcollection of $\Sc_{m, \textrm{\texttt{tiny}}}$ described above, then 
   \begin{equation}\label{final decomposition}
      \|Ef\|_{\BL{k,A}^p(B_{\>\!\!R})} \lessapprox D_{m-1}^{\delta} M(\vec{r}_m, \vec{D}_m) \|f\|_2^{1-\beta_m} \Big( \sum_{O \in \O} \|Ef_{O}\|_{\BL{k,A_{m-1}}^{p_m}(O)}^{p_m}\Big)^{\frac{\beta_m}{p_m}}\!\!\!.
\end{equation}
This concludes the description of \texttt{[alg 2]}.

\subsection{Applying \texttt{[alg 2]} to prove $k$-broad estimates}\label{applying the algorithm section} Having arrived at the final decomposition of the broad norm given by \eqref{final decomposition}, the task is now to apply the properties guaranteed by the algorithm in order to prove the desired estimates. In particular, one wishes to show that the quantity 
\begin{equation*}
    M(\vec{r}_m, \vec{D}_m) \Big( \sum_{O \in \O} \|Ef_{O}\|_{\BL{k,A_{m-1}}^{p_m}(O)}^{p_m}\Big)^{\frac{\beta_m}{p_m}}
\end{equation*}
featured in \eqref{final decomposition} can be effectively bounded, provided that the exponents $p_k, \dots, p_n$ are suitably chosen. 
Since each $O \in \O$ has diameter at most $R^{\delta_0}$, trivially one may bound
\begin{equation*}
    \|Ef_{O}\|_{\BL{k,A_{m-1}}^{p_m}(O)} \lessapprox\|f_O\|_2
\end{equation*}
and, thus, it follows that
\begin{equation*}
\Big( \sum_{O \in \O} \|Ef_{O}\|_{\BL{k,A_{m-1}}^{p_m}(O)}^{p_m}\Big)^{\frac{\beta_m}{p_m}} \lessapprox \Big( \sum_{O \in \O} \|f_{O}\|_{2}^{2}\Big)^{\frac{\beta_m}{p_m}}\!\!\max_{O \in \O} \|f_O\|_2^{2(\frac{1}{2}-\frac{1}{p_m})\beta_m}.
\end{equation*}
The definition of the $\beta_{m}$ ensures that
\begin{equation*}
    \Big(\frac{1}{2}-\frac{1}{p_{m}}\Big)\beta_{m} = \frac{1}{2}-\frac{1}{p_n}
\end{equation*}
whilst \eqref{tiny property 2} and repeated application of Property 2 from \texttt{[alg 2]} imply that
\begin{equation*}
 \sum_{O \in \O} \|f_{O}\|_{2}^{2} \lessapprox \Big(\prod_{i = m-1}^{n-1} D_{i}^{1 + \delta}\Big)\|f\|_{2}^{2}.  
\end{equation*}
 Combining this estimate with \eqref{final decomposition} and the definition of $M(\vec{r}_m, \vec{D}_m)$, one concludes that
  \begin{equation}\label{reduction to max bound}
      \|Ef\|_{\BL{k,A}^{p.}(B_{\>\!\!R})} \lessapprox \prod_{i = m-1}^{n-1} r_{i}^{\frac{\beta_{i+1}-\beta_{i}}{2}}D_{i}^{\frac{\beta_{i+1}}{2} - (\frac{1}{2}-\frac{1}{p_n})+O(\delta)}\|f\|_{2}^{\frac{2}{p_n}}\max_{O \in \O} \|f_O\|_2^{1-\frac{2}{p_n}}
\end{equation}
where $r_{m-1} := 1$. The problem is now to bound the maximum appearing on the right-hand side of this expression. 

By \eqref{tiny property 3} and repeated application of Property 3 of \texttt{[alg 2]}, it follows that for any $m \leq \ell \leq n$ the inequality
\begin{align}
\nonumber
 \max_{O \in \O}\|f_{O}\|_{2}^2 &\lessapprox\prod_{i=m-1}^{\ell-1}\Big(\frac{r_{i+1}}{r_{i}}\Big)^{-\frac{n-i-1}{2}} D_{i}^{-i + \delta} \max_{\vec{S}_{\ell} \in \vec{\Sc}_{\ell}} \|f_{\vec{S}_{\ell}}\|_{2}^2\\
  \label{truncated estimate}
 &= r_\ell^{-\frac{n-\ell}{2}}\prod_{i=m-1}^{\ell-1}r_{i}^{-1/2} D_{i}^{-i + \delta} \max_{\vec{S}_{\ell} \in \vec{\Sc}_{\ell}} \|f_{\vec{S}_{\ell}}\|_{2}^2
\end{align}
holds. This bound will be exploited in different ways.




\subsection{Guth's estimate revisited}\label{Guth revisited section} As a warm up exercise for the more involved computation to follow, here Guth's $k$-broad estimate from~\cite{Guth} is recovered using the above inequalities. In particular, taking $\ell=n$, the inequality \eqref{truncated estimate} simplifies to give:\\

\begin{mdframed}[style=MyFrame]
\begin{key estimate}
\begin{equation*}
   \max_{O \in \O}\|f_{O}\|_{2}^2 \lessapprox \prod_{i=m-1}^{n-1}r_{i}^{-1/2}D_{i}^{-i + \delta}  \|f\|_{2}^2.
\end{equation*}
\end{key estimate}
\end{mdframed}
Combining this with \eqref{reduction to max bound}, one concludes that    
\begin{equation}\label{Guth inequality}
 \|Ef\|_{\BL{k,A}^{p}(B_{\>\!\!R})} \lessapprox \prod_{i=m-1}^{n-1} r_{i}^{X_{i}} D_{i}^{Y_{i} + O(\delta)}\|f\|_{2}
\end{equation}
where
\begin{equation*}
    X_{i} := \frac{\beta_{i+1}-\beta_{i}}{2}- \frac{1}{2}\Big(\frac{1}{2} - \frac{1}{p_n}\Big); \qquad
    Y_{i} := \frac{\beta_{i+1}}{2}-(i+1)\Big(\frac{1}{2}-\frac{1}{p_n}\Big).
\end{equation*}

In order to ensure that there is an acceptable dependence on $R$ in \eqref{Guth inequality}, the parameters must be chosen so as to ensure that $X_{i}, Y_{i} \leq 0$ for $m \leq i \leq n-1$ and $Y_{m-1}\leq 0$. 

\begin{remark} Ostensibly, the above conditions on the $Y_{i}$ do not take into account the additional $O(\delta)$-powers of the $D_{i}$ in \eqref{Guth inequality}. By perturbing the exponents which result under these conditions and choosing $\delta$ sufficiently small depending on the choice of perturbation, the $O(\delta)$-powers may nevertheless be safely handled. This perturbative argument yields an open range of $k$-broad estimates, which can be trivially extended to a closed range via interpolation through logarithmic convexity (the interpolation argument relies on the fact that one is permitted an $R^{\varepsilon}$-loss in the constants in the $k$-broad inequalities).
\end{remark}

Recalling from the definitions that
\begin{equation*}
    \beta_i = \Big(\frac{1}{2} - \frac{1}{p_{n}}\Big)\Big(\frac{1}{2} - \frac{1}{p_{i}}\Big)^{-1},
\end{equation*}
the condition $X_{i} \leq 0$ is equivalent to 
\begin{equation}\label{Guth r constraint}
\Big(\frac{1}{2} - \frac{1}{p_{i + 1}}\Big)^{-1}-\Big(\frac{1}{2} - \frac{1}{p_{i}}\Big)^{-1}\le 1
\end{equation}
whilst the condition $Y_{i-1} \leq 0$ is equivalent to
\begin{equation}\label{Guth D constraint}
    \Big(\frac{1}{2}-\frac{1}{p_{i}}\Big)^{-1}-2i \leq 0 . 
\end{equation}
Choose $p_m := \frac{2m}{m -1}$ so that the exponent satisfies $(\frac{1}{2} - \frac{1}{p_m})^{-1}=2m$ and therefore~\eqref{Guth D constraint} is saturated in the $i = m$ case. The remaining $p_{i}$ are then chosen so as to satisfy 
\begin{equation*}
    \Big(\frac{1}{2} - \frac{1}{p_{i}}\Big)^{-1}= m + i  
\end{equation*}
so that \eqref{Guth r constraint} is saturated for every value of $i$. With this choice, \eqref{Guth D constraint} automatically holds for all the remaining indices $m+1 \leq i \leq n$. The worst situation occurs when $m=k$, in which case one deduces that the inequality 
\begin{equation}\label{Guth estimate}
    \|Ef\|_{\BL{k,A}^p(B_{\>\!\!R})} \lesssim R^{\varepsilon}\|f\|_2
\end{equation}
holds for all $p \geq 2 + \tfrac{4}{n+k-2}$; this exponent agrees with that featured in \cite[Proposition 8.1]{Guth} and, indeed, the above argument is simply a reformulation of the proof appearing in~\cite{Guth}. 




\subsection{Improvement using the polynomial Wolff axioms} To prove Theorem~\ref{main theorem}, the argument of the previous subsection is augmented with the bounds coming from the polynomial Wolff axiom theorem. This follows the strategy of~\cite{Guth2016}, which established the $n=3$ case of the theorem. The goal is to improve the range of $p$ at the expense of weakening the $L^2$-type estimate \eqref{Guth estimate} to an $L^{\infty}$-type estimate
\begin{equation*}
    \|Ef\|_{\BL{k,A}^p(B_{\>\!\!R})} \lesssim R^{\varepsilon}\|f\|_{\infty}.
\end{equation*}
One key observation is that the choice of exponents in the previous subsection does not saturate the constraint \eqref{Guth D constraint} coming from the $D_{i}$ exponents for $m \leq i \leq n-1$. This provides some leeway, and the polynomial Wolff axiom theorem allows one to trade an acceptable loss in the $D_{i}$ exponents for a gain in the $r_{i}$ exponents, and thereby leads to an improvement in the $p$ range.

Fix $m \leq \ell \leq n$ and apply Lemma~\ref{nesting lemma} to deduce that 
\begin{equation}\label{1st}
   \max_{\vec{S}_\ell \in \vec{\Sc}_\ell} \|f_{\vec{S}_{\ell}}\|_2^2 \lessapprox r_{\ell}^{-\frac{n-\ell}{2}}\!\!\!\!\!\max_{\substack{ \vec{S}_{\ell} \in \vec{\Sc}_{\ell} \\ \theta_{\ell} : r_{\ell}^{-1/2}-\mathrm{cap}}} \|f_{\vec{S}_{\ell}}\|_{L^2_{\mathrm{avg}}(\theta_{\ell})}^2.
\end{equation}
Let $\ell \leq i \leq n-1$ and note that, by Property 3 of \texttt{[alg 2]}, 
\begin{equation}\label{2nd}
   \max_{\substack{ \vec{S}_{i} \in \vec{\Sc}_{i} \\ \theta_{i} : r_{i}^{-1/2}-\mathrm{cap}}} \|f_{\vec{S}_{i}}\|_{L^2_{\mathrm{avg}}(\theta_{i})}^2 \lessapprox \Big(\frac{r_{i+1}}{r_{i}}\Big)^{-\frac{n-i-1}{2}}  D_{i}^{\delta}\max_{\substack{ \vec{S}_{i+1} \in \vec{\Sc}_{i+1} \\ \theta_{i+1} : r_{i+1}^{-1/2}-\mathrm{cap}}}\|f_{\vec{S}_{i+1}}\|_{L^2_{\mathrm{avg}}(\theta_{i+1})}^2
\end{equation}
Combining \eqref{1st} with $n-\ell$ applications of \eqref{2nd}, we obtain
\begin{align*}
 \max_{\vec{S}_\ell \in \vec{\Sc}_\ell} \|f_{\vec{S}_{\ell}}\|_2^2 &\lessapprox r_{\ell}^{-\frac{n-\ell}{2}} \prod_{i=\ell}^{n-1}\Big(\frac{r_{i+1}}{r_{i}}\Big)^{-\frac{n-i-1}{2}}D_{i}^{\delta} \max_{ \theta : R^{-1/2}-\mathrm{cap}} \|f\|_{L^2_{\mathrm{avg}}(\theta)}^2 \\
 &\leq  \Big( \prod_{i=\ell}^{n-1}r_{i}^{-1/2}\Big) \Big(\prod_{i=\ell}^{n-1}D_{i}^{\delta}\Big) \|f\|_{\infty}^2.
\end{align*}
Substituting this estimate into \eqref{truncated estimate}, one concludes that
\begin{equation*}
    \max_{O \in \O}\|f_{O}\|_{2}^2 \lessapprox r_\ell^{-\frac{n-\ell}{2}}\Big(\prod_{i=m}^{n-1}r_{i}^{-1/2} D_{i}^{\delta}\Big)\Big( \prod_{i = m-1}^{\ell-1}D_{i}^{-i}\Big)  \|f\|_{\infty}^2
\end{equation*}
for all $m \leq \ell \leq n$. Finally, these $n-m+1$ different estimates are combined into a single inequality by taking a weighted geometric mean, yielding:\\

\begin{mdframed}[style=MyFrame]
\begin{key estimate} Let $0 \leq \gamma_m, \dots, \gamma_n \leq 1$ satisfy $\sum_{j = m}^{n} \gamma_{j} = 1$. Then
\begin{equation*}
    \max_{O \in \O}\|f_{O}\|_{2}^2 \lessapprox \prod_{i=m-1}^{n-1}r_{i}^{-\frac{1+(n-i)\gamma_{i}}{2}} D_{i}^{-i(1-\sum_{j=m}^{i} \gamma_j) + O(\delta)}  \|f\|_{\infty}^2.
\end{equation*}
\end{key estimate}
\end{mdframed}

Thus, the parameters $\gamma_j$ allow a loss in the $D_{i}$ exponents to be traded for a gain in the $r_{i}$ exponents.

The key estimate may be combined with the inequality \eqref{reduction to max bound} from Section~\ref{applying the algorithm section} to yield the bound
  \begin{equation*}
      \|Ef\|_{\BL{k,A}^{p_n}(B_{\>\!\!R})} \lessapprox\prod_{i = m-1}^{n-1} r_{i}^{X_{i}}D_{i}^{Y_{i}+O(\delta)}  \|f\|_{\infty}
\end{equation*}
where
\begin{align*}
    X_{i}&:= \frac{\beta_{i+1}-\beta_{i}}{2}-\frac{1+(n-i)\gamma_{i}}{2}\Big(\frac{1}{2}-\frac{1}{p_n}\Big); \\
    Y_{i}&:= \frac{\beta_{i+1}}{2} - \Big(1+i\big(1-\sum_{j=m}^{i} \gamma_j\big)\Big)\Big(\frac{1}{2}-\frac{1}{p_n}\Big).
\end{align*}
As in Section~\ref{Guth revisited section}, one chooses the various exponents so as to ensure $X_{i}, Y_{i} \leq 0$ for all $m \leq i \leq n-1$ and $Y_{m-1} \leq 0$. Owing to the extra degrees of freedom offered by the $\gamma_j$ parameters, in this case one may in fact saturate all the conditions: that is, the parameters may be chosen so as to ensure that $X_{i} = Y_{i} = 0$. Indeed, the condition $X_{i} = 0$ is equivalent to
\begin{equation}\label{our r constraint}
\Big(\frac{1}{2} - \frac{1}{p_{i+1}}\Big)^{-1}-\Big(\frac{1}{2} - \frac{1}{p_{i}}\Big)^{-1} = 1 + (n-i)\gamma_i
\end{equation}
whilst the condition $Y_{i-1} = 0$ is equivalent to 
\begin{equation}\label{our D constraint}
    \Big(\frac{1}{2} - \frac{1}{p_{i}}\Big)^{-1} = 2i-2(i-1)\sum_{j=m}^{i-1} \gamma_j
\end{equation}
Once again, choose $p_m := \frac{2m}{m -1}$ so that \eqref{our D constraint} holds in the $i = m$ case. The remaining~$p_{i}$ are then defined in terms of the $\gamma_j$ by the equation
\begin{equation}\label{linear system 1} \Big(\frac{1}{2} - \frac{1}{p_{i}}\Big)^{-1}=m + i + \sum_{j=m}^{i-1}(n-j)\gamma_j
\end{equation} 
so that each of the $n-m$ constraints in \eqref{our r constraint} is met. 

It remains to solve for the $n-m+1$ variables $\gamma_m, \dots, \gamma_n$; note that there are $n-m+1$ remaining constraints (in particular, there are $n-m$ constraints left over from \eqref{our D constraint} together with the condition that the $\gamma_{j}$ must sum to 1) and so the number of equations in our system equals the number of variables. By comparing the right-hand sides of \eqref{our D constraint} and \eqref{linear system 1}, it follows that
\begin{equation}\label{linear system 2}
  \sum_{j = m}^{i-1} (n - j +2i-2)\gamma_j = i - m  \qquad \textrm{for $m+1 \leq i \leq n$,}
\end{equation}
from which we read off that $\gamma_m=(n+m)^{-1}$.
To solve this linear system, let $\kappa_{i}$ denote the left-hand side of the equation in the above display and observe that 
\begin{equation*}
    \kappa_{i+1} -2\kappa_{i} + \kappa_{i-1} = (n+i)\gamma_{i}-(n+i-3)\gamma_{i-1} \qquad\textrm{ for $m+1 \leq i \leq n-1$,}
\end{equation*}
where $\kappa_m := 0$. On the other hand, by considering the right-hand side of \eqref{linear system 2}, it is clear that $\kappa_{i+1} -2\kappa_{i} +\kappa_{i-1} = 0$. Combining these observations gives a recursive relation for the $\gamma_j$ and from this one deduces that 
\begin{equation*}
    \gamma_j = \frac{1}{n+m} \prod_{i=m}^{j-1} \frac{n + i -2}{n + i + 1} = \frac{(n+m-1)(n+m-2)}{(n+j)(n+j-1)(n+j-2)}
\end{equation*}
for $m+1 \leq j \leq n-1$.
The remaining parameter $\gamma_n$ is then given by\footnote{To ensure this is a valid solution, one must verify that $\gamma_n \geq 0$ (so that $0 \leq \gamma_j \leq 1$ for all $m \leq j \leq n$). This property follows directly from the identity \eqref{telescoping sum identity} below.} 
\begin{equation*}
\gamma_n = 1 - \sum_{j=m}^{n-1}\gamma_j,
\end{equation*}
so that the $\gamma_j$ sum to 1.

It remains to check that these parameter values give the correct value of $p_n$, corresponding to the exponent $p_n(k)$ stated in Theorem~\ref{main theorem}. It follows from \eqref{our D constraint} that
\begin{equation}\label{computing p}
    \Big(\frac{1}{2} - \frac{1}{p_n}\Big)^{-1} = 2n -2(n-1)\sum_{j=m}^{n-1}\frac{(n+m-1)(n+m-2)}{(n+j)(n+j-1)(n+j-2)}.
\end{equation}
The expression on the right-hand side can be simplified by first writing the denominator in each summand as
\begin{equation*}
\frac{1}{(n+j)(n+j-1)(n+j-2)} = \frac{1}{2} \Big( \frac{1}{n+j-2} - \frac{1}{n+j-1}\Big) - \frac{1}{2} \Big( \frac{1}{n+j-1} - \frac{1}{n+j}\Big)    
\end{equation*}
and then using the resulting telescoping property of the sum. This yields the identity
\begin{equation}\label{telescoping sum identity}
\sum_{j=m}^{n-1}\frac{(n+m-1)(n+m-2)}{(n+j)(n+j-1)(n+j-2)}= \frac{1}{2}\Big(1 - \frac{(n+m-1)(n+m-2)}{(2n-1)2(n-1)}\Big).
\end{equation}
Plugging this into \eqref{computing p} and performing some simple algebraic manipulations, one concludes that
\begin{align*}
p_n
&=2+\frac{8(2n-1)}{n(5n+2m-9)+m(m-3) +4} \leq p_n(k)
\end{align*}
for $m \geq k$,  which completes the proof.\hfill $\Box$




\section{Final remarks}\label{final remarks section}

\begin{remark}\label{Demeter remark} One direction by which the argument could be improved would be to develop a more efficient mechanism for converting $k$-broad estimates into linear estimates than Proposition~\ref{9.1}. One such mechanism does indeed already exist and is described in the work of Bourgain--Guth (see the fourth section of~\cite{BG2011} or~\cite{Luca, Temur2014} for an alternative presentation of this method). In particular, Bourgain--Guth~\cite{BG2011} use Kakeya-type estimates to prove a stronger version of Proposition~\ref{9.1} in which the constraint $p \geq 2 + \frac{4}{2n-k}$ is slightly relaxed. Demeter~\cite{Demeter} used this approach (combined with recent advances on the Kakeya conjecture~\cite{GZ, Zahl2018}) to give the previous best range for the restriction problem in $\R^4$ (namely, $p > 2 + \frac{66642}{83303}$). In fact, using Theorem~\ref{main theorem} (and, in particular, the $3$-broad estimate in four dimensions with $p_4(3)=2+\frac{7}{9}$) one can slightly improve Demeter's result to $p > 2 + \frac{1407}{1759}$ via the same method. For other low dimensions the use of the more efficient Bourgain--Guth mechanism is limited due to the lack of understanding of the Kakeya problem in this regime. In high dimensions, however, stronger Kakeya maximal and $X$-ray transform estimates are available owing to the sum-difference approach to Kakeya, which was pioneered by Bourgain~\cite{Bourgain1999} and later honed by Katz--Tao~\cite{KT1999, Katz2002} and Oberlin \cite{Oberlin2010}. Potentially, improvements could be obtained in high dimensional cases using the more efficient Bourgain--Guth mechanism and the Kakeya-type estimates arising from sum-difference theory; however, since the computation of the various exponents is rather involved and any gain is likely to be very small, this has not been pursued here. 

\end{remark}

\begin{remark} An alternative approach to improving the range of restriction estimates would be to attempt to establish a stronger version of Theorem~\ref{main theorem}. This has been achieved for $n=3$ in the work of Wang~\cite{Wang} who showed that \eqref{broad} holds in the wider range $p > 3 + \frac{3}{13}$ in this case (this in turn implies the best-known result on the restriction problem in $\R^3$; see Figure~\ref{exponent table}). The proof of Wang's theorem relies on a careful analysis and exploitation of certain underlying geometric features of the restriction problem; it would be of interest to extend and incorporate this analysis into the study of higher dimensional situations. 
\end{remark}

\begin{remark}\label{general hypersurfaces remark}  It is not difficult to extend the methods of this article to treat the class of (compact pieces of) hypersurfaces with strictly positive principal curvatures, which includes the unit sphere $S^{n-1}$. To do this, one applies a standard argument to reduce considerations to hypersurfaces of \emph{elliptic-type}, as defined in~\cite{Moyua1999, TVV1998} (see also~\cite{Tao2003, Guth2016}). One may then appeal to the more general transverse equidistribution results of~\cite{GHI} in place of Lemma~\ref{transverse equidistribution lemma}. A more involved version of the Bourgain--Guth method for passing from $k$-broad to linear estimates is also required, but this already essentially appears in~\cite{BG2011} (see also~\cite{GHI}). For this it is useful to work with the class of elliptic-type hypersurfaces (rather than specific examples such as $S^{n-1}$), since this class is closed under parabolic rescaling.

On the other hand, the method breaks down when one considers general (compact pieces of) hypersurfaces of non-vanishing Gaussian curvature. For instance, for the prototypical example of a graph of a non-degenerate quadratic form, the transverse equidistribution estimate from Lemma~\ref{transverse equidistribution lemma} fails to hold in mixed signature cases (see~\cite{GHI} for further discussion of such phenomena). 
\end{remark}

\begin{remark} Another possible direction in which to strengthen the results would be to establish analogous estimates for Bochner--Riesz multipliers. An obvious approach to this would be to follow the classical Carleson--Sj\"olin argument~\cite{CS} (see also~\cite{Hormander1973}) which reduces the problem to establishing certain $L^p$ estimates for oscillatory integrals of the form
\begin{equation}\label{Carleson Sjolin operator}
    T^{\lambda}f(x) := \int_{\R^n} e^{i\lambda|x-y|} a(x,y)f(y)\,\ud y,
\end{equation}
where $a$ is some smooth, compactly supported amplitude. Here the key difficulty is to obtain a favourable dependence in the inequality on the parameter $\lambda \gg 1$. After fixing one of the components of $y$ and scaling, one obtains an operator which can be thought of as a perturbed version of $Ef$. The problem is then to show that the arguments used to study $Ef$ are stable under perturbation; see~\cite{Lee2006, BG2011, GHI} for recent examples of this approach, producing the current best-known results for the Bochner--Riesz problem.  

Again it is useful to work with a class of oscillatory integral operators which is closed under rescaling, rather than just the specific example arising from the Bochner--Riesz problem.  Here some care is needed, however: for a natural class of variable coefficient operators which extends the family of extension operators associated to positively-curved hypersurfaces, the desired $L^p$ estimates are \emph{false} for the range of $p$ featured in this article. Counterexamples of this kind first appeared in work of Bourgain~\cite{Bourgain1991a} and were further studied in~\cite{Bourgain1995, MS, Wisewell2005, BG2011} (see also~\cite{GHI}). For instance, Minicozzi and Sogge~\cite{MS} considered the analogue of \eqref{Carleson Sjolin operator} defined over a compact Riemannian manifold $(M,g)$ given by
\begin{equation}\label{Carleson Sjolin manifold}
    T^{\lambda}f(x) := \int_{M} e^{i\lambda\mathrm{dist}_g(x,y)} a(x,y) f(y)\,\ud y,
\end{equation}
where $\mathrm{dist}_g$ is the Riemannian distance function on $M$. These operators arise naturally in the study of Bochner--Riesz multipliers on compact manifolds, defined with respect to the spectral decomposition of the Laplace--Beltrami operator (see, for instance, \cite[Chapter 5]{Sogge2017}). In~\cite{MS} examples of $(M,g)$ were found for which the desired $L^p$ estimates for \eqref{Carleson Sjolin manifold} could only hold for a relatively small range of $p$. Sharp inequalities for such examples were later established in the work of Guth, Iliopoulou and the first author~\cite{GHI}. The problematic behaviour for certain $M$ can be attributed to the fact that analogues of the polynomial Wolff axioms can fail to hold for families of geodesic tubes relevant to the study of $T^{\lambda}$.
\end{remark}

\begin{remark} 
It is well-known that $L^p$-estimates for the extension operator imply bounds for the Kakeya maximal function. Let $\mathbf{T}$ be a collection of direction-separated $R$-tubes in $\R^n$, with angle at least $R^{-1/2}$ between each pair of tubes. If the estimate 
\begin{equation}\label{extension estimate}
    \|Ef\|_{L^p(\R^n)} \lesssim \|f\|_{L^p(\R^{n-1})}
\end{equation}
is valid for some $p > 2$, then
\begin{equation}\label{Kakeya estimate}
\Big\|\sum_{T\in \mathbf{T}} \mathbf{1}_{T}\Big\|_{L^{p/2}(\R^n)}\lesssim R^{n-1-\frac{2n}{p}} \Big(\sum_{T\in \mathbf{T}} |T|\Big)^{2/p};
\end{equation}
see, for example, ~\cite{Wolff1999} for a proof of this fact.
New estimates for the Kakeya maximal operator with $n=9$ are obtained by plugging in our estimates for the extension operator. For other values of $n$ the maximal function estimates that arise in this way are strictly weaker than those previously obtained by Wolff~\cite{Wolff1995} or Katz--Tao~\cite{Katz2002}.

Maximal inequalities such as \eqref{Kakeya estimate} imply lower bounds on the dimensions of Kakeya sets. Recall that a set $K\subset \R^n$ is \emph{Kakeya} if it is compact and it contains a unit line segment in every direction. Let $d(n)$ denote the infimum of the Hausdorff dimensions of Kakeya sets in $\R^n$; explicitly,
\begin{equation*}
    d(n) := \inf\{\dim K : K \subset \R^n \textrm{ Kakeya}\}.
\end{equation*}
The Kakeya conjecture then asserts that $d(n) = n$. As is well-known, the inequality~\eqref{Kakeya estimate}  implies that
\begin{equation*}
    d(n) \geq \frac{2p}{p-2} - n.
\end{equation*}
However, the aforementioned maximal inequality is not strong enough to improve over the existing lower bounds of Katz--Tao~\cite{Katz2002} for the Hausdorff dimension of Kakeya sets, obtained via the sum-difference method.

In terms of the asymptotic perspective espoused in this article, if \eqref{extension estimate} holds for $p = 2 + \lambda n^{-1} + O(n^{-2})$, then 
\begin{equation*}
    d(n) \geq \frac{4 - \lambda}{\lambda} n + O(1).
\end{equation*}
Taking $\lambda$ to be the value given by Theorem~\ref{asymptotic theorem}, it follows that 
\begin{equation*}
    \frac{4 - \lambda}{\lambda} = 4-2\sqrt{3}=0.535...,
\end{equation*}
which provides a high dimensional improvement over the classical $d(n) \geq \frac{n+2}{2}$ bound of Wolff~\cite{Wolff1995}. Once again, this does not improve the results of Katz--Tao~\cite{Katz2002}. Nevertheless, it seems of interest that one can go beyond the $d(n) \geq \frac{n}{2} + O(1)$ range for the Kakeya problem using a different approach than the sum-difference method, and that oscillatory methods are becoming more effective in the Kakeya problem.

We have since obtained further bounds for the Kakeya conjecture by applying similar arguments to those of this article directly in that context~\cite{HR}. 
\end{remark}





\bibliographystyle{amsplain}

\end{document}